\newcommand\ifpdf
\DeclareMathOperator    \conv           {conv}
\DeclareMathOperator    \intr                   {int}
\DeclareMathOperator    \proj           {proj}
\DeclareMathOperator    \relint         {rel\,int}
\DeclareMathOperator    \verts          {vert}
\DeclareMathOperator    \Aff {Aff}  % Grp of invertible affine linear transformations
\newcommand{\old}[1]{{}}
\newcommand{\bb}{\mathbb}
\newcommand{\R}{\bb R}
\newcommand{\Q}{\bb Q}
\newcommand{\Z}{\bb Z}
\newcommand{\C}{\bb C}
\newcommand\st{\mid}
\def\ve#1{\mathchoice{\mbox{\boldmath$\displaystyle\bf#1$}}
{\mbox{\boldmath$\textstyle\bf#1$}}
{\mbox{\boldmath$\scriptstyle\bf#1$}}
{\mbox{\boldmath$\scriptscriptstyle\bf#1$}}}
\let\saveendproof=\endproof
\def\endproof{\qed\saveendproof}
\newcommand{\I}{\mathcal{P}}  %%NOTE THE CHANGE!!!
\newcommand{\diag}{\smallsetminus}
\newcommand{\Idiag}{\I_{q,\diag}}
\newcommand{\Ivert}{\I_{q,\,\mid\,}}
\newcommand{\Ihor}{\I_{q,-}}
\renewcommand{\P}{\mathcal{P}}
\newcommand{\E}{\mathcal{E}}
\newcommand{\G}{\mathcal{G}}
\renewcommand{\S}{\mathcal{S}}
\newcommand{\x}{{\ve x}}
\newcommand{\y}{{\ve y}}
\renewcommand{\v}{{\ve v}}
\newcommand{\g}{{\ve g}}
\renewcommand{\u}{{\ve u}}
\renewcommand{\a}{{\ve a}}
\newcommand{\f}{{\ve f}}
\newcommand{\0}{{\ve 0}}
\newcommand{\m}{{\ve m}}
\newcommand{\p}{{\ve p}}
\newcommand{\B}{B}
\def\st{\mid}
\newenvironment{psmallmatrix}{\left(\smallmatrix}{\endsmallmatrix\right)}
\newcommand\ColVec[2]{\begin{psmallmatrix}#1\\#2\end{psmallmatrix}}
\title{Equivariant Perturbation in \\Gomory and Johnson's Infinite Group
  Problem.\\ II. The Unimodular Two-Dimensional Case}
\titlerunning{Equivariant Perturbation II}
\author{Amitabh Basu \and
Robert Hildebrand \and
Matthias K\"oppe}
\institute{Dept.\ of Mathematics, University of California, Davis\\
  \texttt{abasu@math.ucdavis.edu}, \texttt{rhildebrand@math.ucdavis.edu}, \texttt{mkoeppe@math.ucdavis.edu}}
\date{\today}%\thanks{$\relax$Revision: 466 $ - \ $Date: 2012-06-20 17:15:03 -0700 (Wed, 20 Jun 2012) $ $}}
\begin{document}

\maketitle
\begin{abstract}
  We give an algorithm for testing the extremality of a large class of
  minimal valid functions for the two-dimensional infinite group problem. 
\end{abstract}
\section{Introduction}

%% The {\em infinite group problem}~\cite{infinite,infinite2}  was introduced
%% by Gomory and Johnson as an elegant infinite dimensional generalization of
%% Gomory's corner polyhedron~\cite{gom}. Both the corner polyhedron and the
%% infinite group problem have played a very important role in the theory of
%% deriving valid cutting planes for integer programming problems. In that
%% regard, these two concepts have had a central role in the foundational
%% aspects of integer programming. Research in this area is still extremely
%% active, with several recent papers discovering very intriguing structures
%% in these problems, and connecting with some deep and beautiful areas of
%% mathematics~\cite{alww,DBLP:journals/siamjo/AndersenWW09,kianfar3,bbcm,bccz,bccz2,bccz08222222,bhkm,BorCor,cm,3slope,dey1,dey2,deyRichard,tspace,kianfar1,kianfar2}. Furthermore, there remain many significant open problems which provide fertile grounds for future research. A detailed discussion of the importance of the infinite group problem is beyond the scope of this paper; instead, we refer the interested reader to the recent survey by Conforti, Cornu\'ejols and Zambelli~\cite{corner_survey}, as well as the research papers cited above. 

\subsection{The group problem}

Gomory's \emph{group problem}~\cite{gom} is a central object in the study of strong
cutting planes for integer linear optimization problems.  One considers an
abelian (not necessarily finite) group $G$, written additively, and studies
the set of functions $s \colon G \to \R$ satisfying the following constraints:  
                \begin{gather}
                        \sum_{\ve r \in G} \ve r\, s(\ve r) \in \ve f + S \tag{IR} \label{GP} \\
                        s(\ve r) \in \mathbb{Z}_+ \ \ \textrm{for all $\ve r \in G$}  \notag\\
                        s \textrm{ has finite support}, \notag
                \end{gather}
where $\ve f$ is a given element in $G$, and $S$ is a subgroup of $G$; so $\ve
f + S$ is the coset containing the element $\ve f$.  
We will be concerned with the so-called {\em infinite group
  problem} \cite{infinite,infinite2}, where $G =\R^k$ is taken to be the group
of real $k$-vectors under addition, and $S= \Z^k$ is the subgroup of the
integer vectors. 
We are interested in studying the convex hull $R_{\ve f}(G,S)$ of all
functions satisfying the constraints in~\eqref{GP}. Observe that $R_{\ve f}(G,S)$ is
a convex subset of the infinite-dimensional vector space
$\mathcal{V}$ of functions $s \colon G \to \R$ with finite support. 

% A main focus of the research in this area is to  give a description
% of $R_{\ve f}(\R,\Z)$ as the intersection of halfspaces of $\mathcal{V}$. % This
% makes a very useful connection between $R_{\ve f}(\R, \Z)$ and traditional integer
% programming, both from a theoretical, as
% well as, practical point of view. This
% arises from the fact that important classes
% of cutting planes for general integer
% programs can be viewed as finite-dimensional
% restrictions of the linear inequalities used
% to describe $R_{\ve f}(\R, \Z)$.

% \paragraph{Valid inequalities and valid functions.}
Any linear inequality in
$\mathcal{V}$ is given by a pair $(\pi, \alpha)$ where $\pi$ is a function
$\pi\colon G \to \R$ (not necessarily of finite support) and $\alpha \in
\R$. The linear inequality is then given by $\sum_{\ve r \in G} \pi(\ve
r)s(\ve r) \geq
\alpha$; the left-hand side is a finite sum because $s$ has finite
support. Such an inequality is called a {\em valid inequality} for $R_{\ve f}(G,S)$
if $\sum_{\ve r \in G} \pi(\ve r)s(\ve r) \geq \alpha$ for all $s \in R_{\ve f}(G,S)$.  It is
customary to concentrate on those valid inequalities for which $\pi \geq 0$;
then we can choose, after a scaling, $\alpha = 1$. Thus, we only focus on
valid inequalities of the form $\sum_{\ve r \in G} \pi(\ve r)s(\ve r) \geq 1$ with $\pi
\geq 0$. Such functions $\pi$ will be termed {\em valid functions} for
$R_{\ve f}(G,S)$. % As pointed out in \cite{corner_survey}, the non-negativity
            % assumption in the definition of a valid function might seem
            % artificial at first. Although there might exist valid
            % inequalities $\sum_{r \in \R} \pi(r)s(r) \geq \alpha$ for
            % $R_{\ve f}(\R,\Z)$ such that $\pi(r) < 0$ for some~$r \in \R$, it can
            % be shown that $\pi$ must be non-negative over all
            % \emph{rational} $r \in \R$. Since data in integer programs are
            % usually rational, it is natural to focus on non-negative valid
            % functions.  

% \paragraph{Minimal and extreme functions.} 
% Gomory and
% Johnson~\cite{infinite,infinite2} defined a hierarchy on the set of valid
% functions, capturing the strength of the corresponding valid inequalities% ,
% % which we summarize now
% . 
A valid function $\pi$ for $R_{\ve f}(G,S)$ is said to be
\emph{minimal} for $R_{\ve f}(G,S)$ if there is no valid function $\pi' \neq \pi$
such that $\pi'(\ve r) \le \pi(\ve r)$ for all $\ve r \in G$. For every valid
function $\pi$ for $R_{\ve f}(G,S)$, there exists a minimal valid function $\pi'$
such that $\pi' \leq \pi$ (cf.~\cite{bhkm}), and thus non-minimal valid
functions are redundant in the description of $R_{\ve f}(G,S)$.  Minimal functions for $R_{\ve f}(G,S)$ were characterized by Gomory for
finite groups $G$ in~\cite{gom}, and later for $R_{\ve f}(\R,\Z)$ by Gomory and
Johnson~\cite{infinite}. We state these results in a unified notation in the
following theorem. 

A function $\pi\colon G \rightarrow \mathbb{R}$ is \emph{subadditive} if
$\pi(\x + \y) \le \pi(\x) + \pi(\y)$ for all $\x,\y \in G$. We say that  $\pi$ is
\emph{symmetric} if $\pi(\x) + \pi(\f - \x) = 1$ for all $\x \in G$.

\begin{theorem}[Gomory and Johnson \cite{infinite}] \label{thm:minimal} Let
  $\pi \colon G \rightarrow \mathbb{R}$ be a non-negative function. Then $\pi$
  is a minimal valid function for $R_{\ve f}(G,S)$ if and only if $\pi(\ve r) = 0$ for
  all $\ve r\in S$, $\pi$ is subadditive, and $\pi$ satisfies the symmetry
  condition. (The first two conditions imply that $\pi$ is constant over any
  coset of $S$.)
\end{theorem}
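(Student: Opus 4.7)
The plan is to prove the two directions of the biconditional separately. For sufficiency (the three conditions imply minimal validity), I would first exploit the hypotheses to derive the useful consequences noted in the parenthetical: for any $\r \in S$, subadditivity and vanishing give $\pi(\x+\r) \leq \pi(\x) + \pi(\r) = \pi(\x)$ and symmetrically $\pi(\x) \leq \pi(\x+\r) + \pi(-\r) = \pi(\x+\r)$ (using $-\r \in S$), so $\pi$ is constant on cosets of $S$. From symmetry applied at $\x = \f$ together with $\pi(\0)=0$, obtain $\pi(\f) = 1$. Then for any feasible $s$, iterated subadditivity yields $\pi\bigl(\sum_{\r} \r\,s(\r)\bigr) \leq \sum_{\r} \pi(\r)\,s(\r)$; since $\sum \r\,s(\r) \in \f + S$, the left-hand side equals $\pi(\f) = 1$, proving validity.

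For minimality within sufficiency, suppose $\pi' \leq \pi$ is another non-negative valid function. For each $\x \in G$, the function $s = \delta_{\x} + \delta_{\f - \x}$ is a feasible solution because $\x + (\f - \x) = \f \in \f + S$, so validity of $\pi'$ gives $\pi'(\x) + \pi'(\f-\x) \geq 1 = \pi(\x) + \pi(\f-\x)$. Combined with $\pi' \leq \pi$ pointwise, this forces $\pi'(\x) = \pi(\x)$ and $\pi'(\f - \x) = \pi(\f - \x)$. Since $\x$ was arbitrary, $\pi' = \pi$.

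For necessity, each of the three conditions is established by a perturbation argument that contradicts minimality if the condition fails. For vanishing on $S$: given $\r \in S$ with $\pi(\r) > 0$, define $\pi'$ by setting $\pi'(\r) := 0$; any feasible $s$ can be replaced by $s'$ with $s'(\r) := 0$, still feasible since $\r\,s(\r) \in S$, so $\sum \pi'(\x)s(\x) = \sum \pi(\x)s'(\x) \geq 1$, showing $\pi'$ is valid and contradicting minimality. For the $\geq 1$ inequality in symmetry, apply validity of $\pi$ to the feasible $s = \delta_{\x} + \delta_{\f-\x}$; the reverse inequality uses minimality: if $\pi(\x_0) + \pi(\f - \x_0) > 1$ somewhere, lowering $\pi$ slightly at one of the two arguments (with care to preserve validity on any feasible $s$ that places mass on $\x_0$) yields a smaller valid function.

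The main obstacle is proving subadditivity from minimality. The standard technique is to pass to the \emph{subadditive closure} $\pi^{\ast}(\r) := \inf\bigl\{\sum_{i=1}^{n} \pi(\r_i) : n \in \N,\ \r_i \in G,\ \sum_i \r_i = \r\bigr\}$, verify $\pi^\ast \leq \pi$, $\pi^\ast \geq 0$, and that $\pi^\ast$ is subadditive by construction. The delicate point is showing that $\pi^\ast$ remains a \emph{valid} function: given a feasible $s$, the quantity $\sum \pi^\ast(\r)s(\r)$ is an infimum of sums $\sum \pi(\r_i)s(\r)$ over decompositions $\r = \sum \r_i$, and one must argue that such a refinement yields another feasible-like configuration whose $\pi$-cost is still $\geq 1$. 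Once $\pi^\ast$ is shown valid, minimality forces $\pi^\ast = \pi$, giving subadditivity of $\pi$ itself.
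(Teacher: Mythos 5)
The paper does not give its own proof of this theorem; it is stated as a known result of Gomory and Johnson and cited to~\cite{infinite}. So I evaluate your argument on its own terms.

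Your sufficiency direction is correct: the coset-constancy derivation, the use of iterated subadditivity to get $\pi\bigl(\sum_{\ve r}\ve r\,s(\ve r)\bigr)\le\sum_{\ve r}\pi(\ve r)s(\ve r)$ and hence validity, and the minimality argument via $s=\delta_{\ve x}+\delta_{\ve f-\ve x}$ all work. The necessity of $\pi|_S=0$ via deleting the mass at $\ve r\in S$ is correct, the $\ge$ half of symmetry from validity is correct, and the subadditive-closure strategy for subadditivity is a standard and workable route (you also correctly flag the one delicate step, namely that refining each atom of a feasible $s$ into a decomposition produces another feasible $s'$ of no smaller $\pi$-cost, so $\pi^*$ stays valid; minimality then forces $\pi^*=\pi$).

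The genuine gap is the $\le$ half of symmetry, which you dispatch with ``lowering $\pi$ slightly at one of the two arguments (with care to preserve validity).'' This hides the real content and, as written, does not go through. First, since any valid competitor $\pi'\le\pi$ must also vanish on $S$ and hence be constant on cosets, the perturbation must lower $\pi$ uniformly on the whole coset $\ve x_0+S$ by some $\delta'>0$. A feasible $s$ may place an arbitrary amount of mass $m$ on that coset, so validity of the perturbed function requires $\sum\pi(\ve r)s(\ve r)\ge 1+m\delta'$ for all such $s$, which is not a ``small'' perturbation statement. Making it work needs two ingredients you never establish: (a) $\pi\le 1$, which follows from minimality by comparing with $\min(\pi,1)$ (valid because any atom of $s$ with $\pi$-value $\ge 1$ already certifies the inequality), and which guarantees $\pi(\ve x_0)>0$; and (b) subadditivity, to show $\sum\pi(\ve r)s(\ve r)\ge\pi(\ve x_0)+\pi(\ve f-\ve x_0)=1+\delta$ whenever $m\ge 1$, by peeling off one $\ve x_0$ and bounding the rest below by $\pi(\ve f-\ve x_0)$. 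One then needs a case split (small $m$ covered by the $1+\delta$ bound, large $m$ covered by the $m\pi(\ve x_0)$ bound) to choose a single admissible $\delta'>0$. In particular the logical order must be: vanishing on $S$, then subadditivity, then symmetry --- the reverse of what your sketch suggests, since your symmetry step silently relies on subadditivity. Until these pieces are supplied, the necessity direction is incomplete.
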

        
%% \begin{remark} 
%%   Note that this implies that one can view a minimal
%%   valid function $\pi$ as a function from $G/S$ to $\R$, and thus
%%   studying $R_{\ve f}(G,S)$ is the same as studying $R_{\ve f}(G/S,0)$.  However, we avoid
%%   this viewpoint in this paper.
%% \end{remark}

\subsection{Characterization of extreme valid functions}

A stronger notion is
that of an {\em extreme function}. A~valid function~$\pi$ is \emph{extreme}
for $R_{\ve f}(G,S)$ if it cannot be written as a convex combination of two other
valid functions for $R_{\ve f}(G,S)$, i.e., $\pi = \frac{1}{2}\pi_1 +
\frac{1}{2}\pi_2$ implies $\pi = \pi_1 = \pi_2$.  Extreme functions are
minimal.
A tight characterization of extreme functions for $R_{\ve f}(\R^k,\Z^k)$ has eluded
researchers for the past four decades now, however, various specific
sufficient conditions for guaranteeing
extremality~\cite{bhkm,3slope,dey1,dey2,deyRichard,tspace} have been proposed.
% In this paper, we give an algorithm for deciding the extremality of piecewise linear functions with rational breakpoints. This algorithm is then used to prove a simple necessary and sufficient condition for the extremality of {\em continuous} piecewise linear minimal functions. To the best of our knowledge, these two results are the first of their kind; in comparison, the majority of results on extremality in the literature are very specific sufficient conditions for guaranteeing extremality~\cite{bhkm,3slope,dey1,dey2,deyRichard,tspace}. Moreover, some of the more general sufficient conditions (see, for example, Theorem 6 in~\cite{dey1}) are not algorithmic in the sense that given a particular valid function, it is not possible to test the sufficient condition in a finite number of computations. We give more details below.
% \paragraph{Techniques of this paper.}
The standard technique for showing extremality is as follows.  Suppose that $\pi =
\frac12\pi^1+\frac12\pi^2$, where $\pi^1,\pi^2$ are other (minimal) valid
functions.  All subadditivity relations that are tight for~$\pi$ are also
tight for $\pi^1,\pi^2$.  Then one uses a lemma of \emph{real
  analysis}, the so-called Interval Lemma introduced by Gomory and Johnson
in~\cite{tspace} or one of its variants.  
The Interval Lemma allows us to deduce certain affine
linearity properties that $\pi^1$ and $\pi^2$ share 
with~$\pi$.  This is followed by a finite-dimensional \emph{linear algebra} argument % (explicit in
% Gomory--Johnson's proof of the 2-Slope Theorem, but implicit in many other
% proofs)
to establish uniqueness of~$\pi$, implying $\pi=\pi^1=\pi^2$, and thus the
extremality of~$\pi$.

Surprisingly, the \emph{arithmetic} (number-theoretic) aspect of the problem
has been largely overlooked, even though it is at the core of the
theory of the closely related \emph{finite} group problem.  
In \cite{basu-hildebrand-koeppe:equivariant}, the authors showed that this aspect
is the key for completing the classification of extreme functions.  
The authors studied the case $k=1$ and gave a
complete and algorithmic answer for the case of piecewise linear functions
with rational breakpoints in the set~$\frac1q\Z$. To capture the relevant
arithmetics of the problem, the authors studied sets of
additivity relations of the form $\pi(\ve t_i) + \pi(\y) = \pi(\ve t_i + \y)$ and 
$\pi(\x) + \pi(\ve r_i-\x) = \pi(\ve r_i)$, where the points $\ve t_i$ and $\ve r_i$ are
certain breakpoints of the function~$\pi$.  They give rise to the \emph{reflection group}~$\Gamma$ generated by the reflections
$\rho_{\ve r_i}\colon \x \mapsto \ve r_i-\x$ and translations~$\tau_{\ve t_i}\colon \y\mapsto
\ve t_i+\y$.  The natural action of the reflection group~$\Gamma$ on the set
of intervals delimited by the elements of~$\frac1q\Z$ transfers the affine
linearity established by  
the Interval Lemma on some interval~$I$ to a connected component of the orbit
$\Gamma(I)$. 
When this establishes affine linearity
of $\pi^1, \pi^2$ on all intervals where $\pi$ is affinely linear, one
proceeds with finite-dimensional linear algebra to decide extremality
of~$\pi$.  Otherwise, there is a way to perturb $\pi$ slightly to
construct distinct minimal valid functions $\pi^1 = \pi + \bar\pi$ and $\pi^2
= \pi - \bar\pi$, using any sufficiently small,
$\Gamma$-equivariant perturbation function (see \autoref{s:equivariant}), modified by restriction to a
certain connected component. 
\medbreak

\subsection{Contributions of the paper}

In the present paper, we continue the program
of~\cite{basu-hildebrand-koeppe:equivariant}.  We study a remarkable class of
minimal functions~$\pi$ of the two-dimensional infinite group problem
($k=2$).  Let $q$ be a positive integer.  Consider the
arrangement~$\mathcal H_q$ of all hyperplanes (lines) of the form
$ (0,1)\cdot \x = b$, $(1,0)\cdot \x = b$, and $(1,1)\cdot\x  = b$,
where $b \in \tfrac{1}{q}\Z$.  The complement of the arrangement~$\mathcal
H_q$ consists of two-dimensional cells, whose closures are the triangles
$T_0 = \frac1q \conv(\{ \begin{psmallmatrix}0\\0\end{psmallmatrix}
, \begin{psmallmatrix}1\\0\end{psmallmatrix}
, \begin{psmallmatrix}0\\1\end{psmallmatrix}
 \})$ and $T_1 = \frac1q \conv(\{\begin{psmallmatrix}1\\0\end{psmallmatrix}
, \begin{psmallmatrix}0\\1\end{psmallmatrix}
,
\begin{psmallmatrix}1\\1\end{psmallmatrix}
\})$ and their translates by elements of the lattice $\smash[t]{\frac1q\Z^2}$. 
We denote by $\P_q$ the collection of these triangles and the vertices and
edges that arise as intersections of the triangles.  Thus $\P_q$ is a
polyhedral complex that is a triangulation of the space~$\R^2$.
% We begin by considering functions in 2-dimensions that are continuous, piecewise linear and that have a very particular slope complex.  Let $q \in \Z_+$ and let $\P_q$ be the collection of full-dimensional cells contained in the hyperplane arrangement 
% $$
% \mathcal H_q = \{ (0,1)\cdot \x = b, (1,0)\cdot \x = b, (1,1)\cdot\x  = b \colon b \in \tfrac{1}{q}\Z\}.$$
% \texttt{This polyhedral complex has maximal elements as the closed triangles in the hyperplane arrangement -Robert}.
% Note that these cells are always closed triangles and the vertices of $\P_q$ are $(\frac{1}{q}\Z )\times (\frac{1}{q} \Z)$.  For short hand, we denote this product as $\frac{1}{q}\Z^2$, and similarly for higher dimensional lattices.
Within the polyhedral complex $\P_q$, let $\I_{q,0}$ be the set of 0-faces (vertices),
$\I_{q,1}$ be the set of 1-faces (edges), and $\I_{q,2}$ be the set of 2-faces
(triangles)% , where
% an $n$-face is a face of dimension $n$
.  The sets of diagonal, vertical, and horizontal edges will be denoted by
$\Idiag$, $\Ivert$, and $\Ihor$, respectively.
We will use $\oplus$ and $\ominus$ to denote vector addition and subtraction
modulo $1$, respectively.  We use the same notation for pointwise sums and
differences of sets. By quotienting out by~$\Z^2$, we obtain a finite complex that
triangulates~$\R^2/\Z^2$; we still denote it by~$\P_q$.  

We call a function $\pi\colon \R^2\to \R$ \emph{continuous piecewise linear over
  $\P_q$} if it is an affine linear function on each of the triangles of~$\P_q$. 
We introduce the following notation.  For every $I \in \P_q$, 
the restriction $\pi|_I$ is an affine function, that is $\pi|_I(\x) = \m_I \cdot \x + b_I$ for
some $\m_I\in \R^2$, $b_I \in \R$.   We abbreviate $\pi|_I$ as
$\pi_I$.  % By definition, for any $I,J \in \P_q$ such that $I \cap J \neq
% \emptyset$, since $\pi$ is restricted to be affine on both closed sets $I, J$,
% the function must be continuous at their intersection.  Therefore, continuity
% is implied in the definition. 

For a valid function $\pi$, we consider the set $E(\pi) = \{\, (\x,\y) \st \pi(\x) + \pi(\y) =
\pi(\x \oplus \y)\,\}$ of pairs $(\ve x, \ve y)$, for which the subadditivity
relations are tight.  
Because~$\mathcal P_q$ enjoys a strong unimodularity property (\autoref{lemma:I+J}), 
we can give a finite combinatorial representation of the set $E(\pi)$ using the
faces of~$\P_q$; this extends a technique
in~\cite{basu-hildebrand-koeppe:equivariant}.  
% This means to use faces $I, J, K \in \P_q$ such that
% $\pi(\x)+ \pi(\y) = \pi(\x \oplus \y)$ for all $\x \in I, \y \in J, \x \oplus
% \y \in K$.  
%
% This motivates the following definitions.  
For faces $I, J, K \in \P_q$, let $$F(I,J,K) = \{\,(\x,\y) \in \R^2\times\R^2
\st \x \in I,\, \y \in J,\, \x \oplus \y \in K\,\}.$$ 
A triple $(I,J,K)$ of faces is called a \emph{valid
  triple} (\autoref{def:valid-triple}) if none of the sets $I$,
$J$, $K$ can individually decreased without changing the resulting set $F(I,J,K)$. 
Let $E(\pi, \P_q)$ denote the set of valid triples $(I,J,K)$ such that 
$$\pi(\x) + \pi(\y) = \pi(\x \oplus \y) \quad\text{for all}\quad (\x,\y) \in
F(I,J,K).$$ % \x \in I, \y \in J, \x\oplus \y \in K.
$E(\pi, \P_q)$ is partially ordered by letting $(I,J,K) \leq (I',J',K')$ if
and only if $I\subseteq I'$, $J\subseteq J'$, and $K\subseteq K'$. 
% A valid triple $(I,J,K)$ is maximal in $E(\pi,\P_q)$ if whenever $(I',J',K') \in E(\pi, \P_q)$ and $I \subseteq I'$, $J \subseteq J'$, $K \subseteq K'$, we have that $I = I'$, $J = J'$, $K = K'$.  
Let $E_{\max{}}(\pi, \P_q) % \subseteq E(\pi,\P_q)
$ be the set of all maximal
valid  triples of the poset $E(\pi, \P_q)$.  
Then $E(\pi)$ is exactly covered by the sets $F(I,J,K)$ for the maximal valid
triples $(I,J,K) \in E_{\max{}}(\pi, \P_q)$ (\autoref{lemma:covered-by-maximal-valid-triples}).

%% \begin{lemma}
%% For $I,J,K\in\P_q$, $(I,J,K)$ is a valid triple if and only if one of the following happens:
%% \begin{enumerate}
%% \item $I,J,K \in \I_{q,2}$ and $K \subseteq I \oplus J$
%% \item $I \in \I_{q,1}, J, K \in \I_{q,2}$ and $K \subseteq I \oplus J$
%% \item $J\in \I_{q,1}, I, K \in \I_{q,2}$ and $K \subseteq I \oplus J$
%% \item $K\in \I_{q,1}, I, J \in \I_{q,2}$ and $I \subseteq K \ominus J, J \subseteq K \ominus J$
%% \item $I \in \I_{q,0}, J,K \in \I_{q,2}$ and $K \subseteq I \oplus J$
%% \item ????
%% \end{enumerate}
%% \end{lemma}

% Let $\pi$ be a minimal valid function that is continuous piecewise linear
% on~$\P_q$.  Then $\ve f$ must be a vertex of $\P_q$
% (\autoref{lemma:faVertex}). 
In the present paper, we will restrict ourselves to a setting without maximal valid
triples that include horizontal or vertical edges.
\begin{definition}  A continuous piecewise linear function $\pi$ on $\P_q$ is
  called \emph{diagonally constrained} if whenever $(I, J, K) \in E_{\max{}}(\pi, \P_q)$, then $I,J,K \in \I_{q,0} \cup \Idiag \cup \I_{q,2}$.
  % 
% \in \P_q$, and $\pi(\x) + \pi(\y) = \pi(\x \oplus \y)$ for all $\x \in I, y \in J, \x \oplus \y \in K$, there exist $I', J', K' \in \I_{q,0} \cup \Idiag \cup \I_{q,2}$ with $I \subseteq I', J \subseteq J', K \subseteq K'$ such that $\pi(\x) + \pi(\y) = \pi(\x \oplus \y)$ for all $\x \in I', \y \in J', \x \oplus \y \in K'$, that is,  $I', J',K'$ are each either a vertex, diagonal edge, or 2-dimensional triangle in $\P_q$.
\end{definition}

\begin{remark}
  Given a piecewise linear continuous valid function~$\zeta\colon \R\to\R$ for
  the one-dimensional infinite group problem,
  Dey--Richard \cite[Construction 6.1]{deyRichard} consider the
  function~$\kappa\colon 
  \R^2\to\R$, $\kappa(\ve x) =\zeta(\ve1\cdot\ve x)$, where $\ve1=(1,1)$, and show that $\kappa$ is
  minimal and extreme if and only if $\zeta$ is minimal and extreme,
  respectively. %%%% Actually, they use "facet-defining"!
  If $\zeta$ has rational breakpoints in $\frac1q\Z$, then $\kappa$ belongs to
  our class of diagonally constrained continuous piecewise linear functions over $\P_q$.
\end{remark}

We prove the following main theorem.
\begin{theorem}\label{thm:main}
Consider the following problem.  
\begin{quote}
  Given a minimal 
  valid function $\pi$ for $R_f(\R^2,\Z^2)$ that is 
  piecewise linear continuous on~$\P_q$ and diagonally constrained, 
  decide if $\pi$ is extreme.
\end{quote}
There exists an algorithm for this problem that takes a number of elementary operations over the reals that is
bounded by a polynomial in $q$.
\end{theorem}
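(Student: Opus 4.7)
The plan is to adapt the one-dimensional algorithm of \cite{basu-hildebrand-koeppe:equivariant} to the triangulation $\P_q$, using the diagonally constrained hypothesis to confine attention to the diagonal edges and the triangles. The first step is to compute the finite poset $E(\pi,\P_q)$ of valid triples: for each triple $(I,J,K)$ with $I,J,K\in\I_{q,0}\cup\Idiag\cup\I_{q,2}$, verify whether the additivity relation $\pi(\x)+\pi(\y)=\pi(\x\oplus\y)$ holds identically on $F(I,J,K)$, which reduces to a constant-size linear check on the slope-intercept data $(\m_I,b_I),(\m_J,b_J),(\m_K,b_K)$. With $O(q^2)$ faces, the resulting $O(q^6)$ checks produce the maximal triples $E_{\max{}}(\pi,\P_q)$ in polynomial time.

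Next, suppose $\pi=\tfrac12\pi^1+\tfrac12\pi^2$ with $\pi^1,\pi^2$ other minimal valid functions; every tight relation in $E(\pi)$ is inherited by $\pi^1,\pi^2$. For each $(I,J,K)\in E_{\max{}}(\pi,\P_q)$ in which some entry is a full triangle, a two-dimensional Interval Lemma forces $\pi^1,\pi^2$ to be affine on that triangle with the same gradient on each coupled face; for triples consisting only of diagonal edges and vertices, the classical one-dimensional Interval Lemma applied along the diagonal direction yields analogous slope identifications along those edges. Let $\Gamma$ be the reflection group generated by the reflections $\rho_{\ve r}$ and translations $\tau_{\ve t}$ coming from the maximal tight triples; the $\Gamma$-action transports the affine identifications throughout each orbit. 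Encoding this propagation as an identification graph whose nodes are triangles and diagonal-edge classes, and combining the resulting constraints with the symmetry condition $\pi(\x)+\pi(\f\ominus\x)=1$, yields a finite-dimensional linear system in the unknown parameters of $\bar\pi:=\pi^1-\pi$. The function $\pi$ is extreme iff this system has only the trivial solution; otherwise a nontrivial solution provides a $\Gamma$-equivariant perturbation which, after restriction to a suitable connected component, certifies non-extremality via $\pi^1=\pi+\bar\pi$ and $\pi^2=\pi-\bar\pi$.

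The principal obstacle is the second step: establishing the correct two-dimensional Interval Lemma on the triangles of $\P_q$ and verifying that the unimodularity property (\autoref{lemma:I+J}) allows slope and intercept identifications to be transported cleanly across $\Gamma$-orbits without creating inconsistencies. The diagonally constrained assumption is essential here: it eliminates tight triples involving horizontal or vertical edges, so $\Gamma$ acts essentially as a reflection group in the single diagonal direction $(1,1)$, and the remaining perturbation space has dimension polynomial in $q$. Once these geometric-arithmetic ingredients are in place, the polynomial-time claim reduces to the fact that all combinatorial objects (faces, triples, orbits, identification graph) have size polynomial in $q$ and that the final extremality test is a rank computation on a linear system of the same size.
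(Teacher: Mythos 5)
Your outline follows the same general strategy as the paper (compute the additivity poset $E(\pi,\P_q)$, apply Interval-Lemma results to establish affine linearity, propagate via reflections/translations, then either finish with linear algebra or exhibit a perturbation), but there is a genuine gap at precisely the step you flag as "the principal obstacle," and it is not a matter of filling in routine details.

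The gap is the claim that the perturbation space is finite-dimensional, of size polynomial in~$q$, and that ``$\pi$ is extreme iff the finite linear system has only the trivial solution.'' If $\pi$ is affine imposing on all of $\I_{q,2}$, then indeed any $\pi^1$ must be piecewise linear over $\P_q$, so $\bar\pi=\pi^1-\pi$ has finitely many parameters and the linear-algebra test is both well defined and decisive (this is Theorem~\ref{theorem:systemNotUnique}). But if $\pi$ is \emph{not} affine imposing on $\I_{q,2}$, there is no a priori reason for $\pi^1$ to be piecewise linear over $\P_q$ or over any fixed subdivision, so your ``unknown parameters of $\bar\pi$'' are not a finite set, and the system you would write down cannot by itself certify non-extremality. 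The paper's crucial step, which your proposal does not supply, is to \emph{construct} explicit $\Gamma$-equivariant perturbations $\psi$ and $\varphi$ that are continuous piecewise linear over $\P_{4q}$ and to verify, by a careful case analysis on the maximal tight triples (using that $\pi$ is diagonally constrained), that cutting them off by an indicator of a connected component of the slope-identification graph yields valid minimal functions $\pi\pm\tfrac{\epsilon}{3}\bar\pi$. This gives Corollary~\ref{corollary:AIG4}: failure of affine imposingness over $\I_{q,2}$ produces distinct minimal $\pi^1,\pi^2$ that are piecewise linear over~$\P_{4q}$. Only with this in hand does one get the finite equivalence (Theorem~\ref{thm:1/4q test}) that $\pi$ is extreme iff the system~\eqref{equation:system} on $\tfrac1{4q}\Z^2$ has a unique solution, which is what actually yields a polynomial-in-$q$ algorithm. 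Your proposal points at "$\Gamma$-equivariant perturbation $\ldots$ after restriction to a suitable connected component" but neither says why such a perturbation can be taken over a complex refining $\P_q$ by only a constant factor, nor how one would verify algorithmically that the perturbed functions remain subadditive and symmetric; both are needed, and both are where the real work lies.
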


As a direct corollary of the proof of the theorem, we obtain the following result relating
the finite and infinite group problems.

\begin{theorem}\label{thm:1/4q}
Let $\pi$ be a minimal continuous piecewise linear function over $\P_q$ that is diagonally constrained.  
Then $\pi$ is extreme for $R_{\ve f}(\R^2, \Z^2)$ if and only if the restriction $\pi\big|_{\tfrac{1}{4q}\Z^2}$ is extreme for $R_{\ve f}(\frac{1}{4q} \Z^2, \Z^2)$.  
\end{theorem}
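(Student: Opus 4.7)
The plan is to prove both directions of the equivalence by leveraging the structural machinery that underlies the proof of Theorem~\ref{thm:main}.

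\textbf{Forward direction (necessity).} I would argue the contrapositive. Suppose $\pi|_{\frac{1}{4q}\Z^2} = \tfrac{1}{2}\bar\pi^1 + \tfrac{1}{2}\bar\pi^2$ with two distinct minimal valid functions $\bar\pi^1, \bar\pi^2$ for $R_{\ve f}(\tfrac{1}{4q}\Z^2, \Z^2)$. The idea is to lift each $\bar\pi^i$ to a continuous piecewise linear function $\pi^i$ on $\R^2$ by affine interpolation over the refined complex $\P_{4q}$, whose vertices lie in $\frac{1}{4q}\Z^2$. I would then verify that each $\pi^i$ is a minimal valid function for $R_{\ve f}(\R^2,\Z^2)$: the conditions $\pi^i(\ve r) = 0$ for $\ve r\in\Z^2$ and symmetry are inherited from the finite-group hypothesis, while subadditivity at arbitrary real points reduces, by affine linearity on each triangle of~$\P_{4q}$ and the unimodularity property underlying \autoref{lemma:covered-by-maximal-valid-triples}, to subadditivity at vertices, which holds by hypothesis on $\tfrac{1}{4q}\Z^2$. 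Combined with the identity $\pi = \tfrac{1}{2}\pi^1 + \tfrac{1}{2}\pi^2$ and $\pi^1\neq\pi^2$, this contradicts the extremality of~$\pi$.

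\textbf{Backward direction (sufficiency).} Suppose $\pi = \tfrac{1}{2}\pi^1 + \tfrac{1}{2}\pi^2$ with distinct minimal valid functions $\pi^1,\pi^2$ for $R_{\ve f}(\R^2,\Z^2)$. I would invoke the structural analysis carried out in the proof of Theorem~\ref{thm:main}, which uses the Interval Lemma, the reflection-group~$\Gamma$ equivariance, and unimodularity of~$\P_q$ to conclude that $\pi^1,\pi^2$ are affine on each cell of a suitable refinement of~$\P_q$. The crucial bookkeeping step is to trace through that argument and verify that the refinement produced is coarser than or equal to~$\P_{4q}$: the maximal additive triples $F(I,J,K)$ with $I,J,K\in\I_{q,0}\cup\Idiag\cup\I_{q,2}$ propagate affine linearity along segments whose subdivision points---centroids, midpoints, and intersections of translates of $\P_q$-edges---land on $\tfrac{1}{4q}\Z^2$. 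It follows that the restrictions $\pi^i|_{\tfrac{1}{4q}\Z^2}$ determine $\pi^i$ entirely, and since $\pi^1\neq\pi^2$ forces disagreement at some vertex of~$\P_{4q}$, these restrictions are distinct minimal valid functions for the finite group problem whose average equals $\pi|_{\tfrac{1}{4q}\Z^2}$, contradicting its assumed extremality.

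\textbf{Main obstacle.} The technical crux lies in the backward direction: one must verify that every affine-linearity propagation step used in the proof of Theorem~\ref{thm:main}---including both the one-dimensional Interval Lemma applications on diagonal edges and the two-dimensional variants on triangles---produces breakpoints no finer than $\tfrac{1}{4q}\Z^2$. The diagonally constrained hypothesis is essential here: by restricting all maximal valid triples to lie in $\I_{q,0}\cup\Idiag\cup\I_{q,2}$, it excludes the finer subdivisions that would otherwise be forced by horizontal and vertical edges interacting with translates under the action of~$\Gamma$. Pinning down the factor~$4$ (rather than, say, $2$ or $3$) requires a careful case analysis of how pairs of faces from $\Idiag\cup\I_{q,0}\cup\I_{q,2}$ intersect when summed modulo $1$.
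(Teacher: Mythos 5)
Your forward direction (lift a nontrivial finite-group decomposition to $\R^2$ by affine interpolation over $\P_{4q}$, then verify minimality using the reduction of subadditivity to vertices as in \autoref{lemma:subSym} and \autoref{lemma:vertices}) matches the paper's route through Theorem~\ref{theorem:systemNotUnique}(i) applied over $\P_{4q}$, and is sound.

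The backward direction has a genuine gap. You take an arbitrary nontrivial decomposition $\pi = \tfrac12\pi^1 + \tfrac12\pi^2$ and claim that the structural machinery shows $\pi^1, \pi^2$ are affine on cells of a refinement no finer than~$\P_{4q}$. That conclusion is available only on faces where $\pi$ is \emph{affine imposing}. If $\pi$ fails to be affine imposing on some face of $\I_{q,2}$, then the Interval-Lemma / reflection-group propagation does not reach that face, and the given $\pi^1,\pi^2$ need not be piecewise linear there at all; in particular $\pi^1\big|_{\frac1{4q}\Z^2}$ and $\pi^2\big|_{\frac1{4q}\Z^2}$ could coincide even though $\pi^1\neq\pi^2$, so restriction would \emph{not} produce a nontrivial finite decomposition. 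The paper's argument handles exactly this case by a dichotomy: either $\pi$ is affine imposing (and then all decompositions are piecewise linear over $\P_q\subseteq\P_{4q}$, so restriction works), or $\pi$ is not affine imposing, in which case Lemma~\ref{lemma:not-extreme}/Corollary~\ref{corollary:AIG4} \emph{constructs} a specific nontrivial decomposition $\pi\pm\tfrac{\epsilon}{3}\bar\pi$ using the equivariant perturbations $\psi$ or $\varphi$, which are by design piecewise linear over~$\P_{4q}$. You need this construction; you cannot merely analyze the given $\pi^1,\pi^2$. Relatedly, your explanation for the factor~$4$ — case analysis of how faces from $\I_{q,0}\cup\Idiag\cup\I_{q,2}$ intersect under Minkowski sums — is off-target: by Lemma~\ref{lemma:I+J} those sums are already unions of faces of $\P_q$ and force no finer subdivision. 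The~$4$ comes from the construction of the equivariant perturbations $\psi$ and $\varphi$ (they need interior vertices in each $\P_q$-triangle, resp.\ a triangle-wave along the diagonal with zeros at both multiples of $\tfrac1q$ and their midpoints), not from arithmetic of face intersections.
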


We conjecture that the hypothesis on $\pi$ being diagonally constrained can be
removed. 

\section{Real analysis lemmas}
\label{s:real-analysis}

For any element $\x \in\R^k$, $k \geq 1$, $\lvert \x \rvert$ will denote the
standard Euclidean norm. The proof of the following theorem appears in 
appendix~\ref{s:continuity}.

\begin{theorem}
%\texttt{Amitabh: Generalize theorem, send proof to appendix.}
\label{Theorem:functionContinuous}
If $\pi \colon \R^k \to \R$ is a minimal valid function, and $\pi = \frac{1}{2}\pi^1 + \frac{1}{2}\pi^2$, where $\pi^1, \pi^2$ are valid functions, then $\pi^1, \pi^2$ are both minimal. Moreover, if $\limsup_{\ve h\to 0} \frac{\lvert\pi(\ve h)\rvert}{\lvert \ve h\rvert} < \infty$, then this condition also holds for $\pi^1$ and $\pi^2$. This implies that $\pi, \pi^1$ and $\pi^2$ are all Lipschitz continuous.
\end{theorem}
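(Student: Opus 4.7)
The plan is to establish the three assertions in sequence: minimality of $\pi^1$ and $\pi^2$, transfer of the $\limsup$ condition, and Lipschitz continuity of all three functions under that condition.

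For minimality, I would argue by contradiction. If $\pi^1$ were not minimal, then by the existence result recalled in the introduction (``for every valid function there is a dominated minimal valid function''), there is a minimal valid function $\tilde\pi^1 \le \pi^1$ with $\tilde\pi^1 \ne \pi^1$. Since the defining inequalities $\sum_{\ve r} \pi(\ve r)\,s(\ve r) \ge 1$ are linear in $\pi$ and non-negativity is preserved under convex combinations, the function $\tilde\pi := \tfrac12 \tilde\pi^1 + \tfrac12 \pi^2$ is valid. But $\tilde\pi \le \pi$ pointwise with strict inequality wherever $\tilde\pi^1 < \pi^1$, contradicting minimality of $\pi$. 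The same argument handles $\pi^2$.

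For the $\limsup$ condition I would exploit non-negativity of valid functions: from $\pi^2 \ge 0$ and $\pi = \tfrac12 \pi^1 + \tfrac12 \pi^2$ we get $\pi^1 \le 2\pi$, and symmetrically $\pi^2 \le 2\pi$. Together with $\pi^i \ge 0$, this gives $|\pi^i(\ve h)|/|\ve h| \le 2\,|\pi(\ve h)|/|\ve h|$, and taking $\limsup_{\ve h \to \ve 0}$ transfers the bound. For Lipschitz continuity I would use subadditivity and $\pi(\ve 0) = 0$ supplied by \autoref{thm:minimal}. Fix $M, \delta > 0$ so that $\pi(\ve h) \le M|\ve h|$ for $|\ve h| \le \delta$; for an arbitrary $\ve h$, choose $n \in \N$ with $|\ve h|/n \le \delta$ and apply subadditivity $n$ times to get $\pi(\ve h) \le n\,\pi(\ve h/n) \le M|\ve h|$, extending the linear bound globally. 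Then subadditivity supplies both $\pi(\x + \ve h) \le \pi(\x) + \pi(\ve h)$ and $\pi(\x) \le \pi(\x + \ve h) + \pi(-\ve h)$, yielding $|\pi(\x + \ve h) - \pi(\x)| \le \max(\pi(\ve h), \pi(-\ve h)) \le M|\ve h|$. The same argument applies to $\pi^1$ and $\pi^2$, which inherit both minimality and the $\limsup$ bound from the previous two steps.

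I do not expect a substantial obstacle. The proof rests on (i) convexity of the set of valid functions, (ii) non-negativity of valid functions, and (iii) the subadditivity/symmetry characterization of minimality in \autoref{thm:minimal}. The only mildly technical piece is the classical ``scale down and apply subadditivity $n$ times'' trick used to globalize the linear bound $\pi(\ve h) \le M|\ve h|$ from a neighborhood of $\ve 0$ to all of $\R^k$.
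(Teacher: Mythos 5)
Your proposal is correct and follows essentially the same route as the paper: the paper also derives $\pi^i \le 2\pi$ from non-negativity to transfer the $\limsup$ bound, and then invokes a subadditivity-based Lipschitz argument (their Lemma~\ref{lem:lipschitz}, which uses the same ``break into equal subintervals and apply subadditivity'' device you describe, phrased directly on differences $\theta(\ve x)-\theta(\ve y)$ rather than first globalizing $\pi(\ve h)\le M|\ve h|$). The one place you add content is minimality: the paper simply declares it ``clear,'' whereas you supply the short contradiction argument via a dominated minimal valid function $\tilde\pi^1\le\pi^1$ and convexity of the set of valid functions, which is a clean and correct way to make that claim precise.
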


The following lemmas are corollaries of a general version of the interval
lemma or similar real analysis arguments.  Proofs appear in appendix~\ref{s:generalized-interval-lemma}.

\begin{lemma}\label{cor:triangle+triangle}
Suppose $\pi$ is a continuous function and let $(I, J, K) \in E(\pi, \P_q)$ be a valid triple of triangles, i.e., $I,J,K \in \I_{q,2}$. Then $\pi$ is affine in $I, J, K$ with the same gradient.
\end{lemma}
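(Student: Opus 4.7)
The plan is to reduce the claim to a two-dimensional version of the Interval Lemma, apply it on a product-open subset of $F(I,J,K)$, and then propagate the resulting affine structure to the entire triangles $I$, $J$, $K$.

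First, I would locate a product neighborhood inside $F(I,J,K)$. Because $I, J, K \in \I_{q,2}$ are full-dimensional triangles and $(I,J,K)$ is a valid triple, the set $F(I,J,K)$ has a non-empty 4-dimensional relative interior, and there exists $(\x_0, \y_0)$ with $\x_0 \in \intr I$, $\y_0 \in \intr J$, and $\x_0 \oplus \y_0 \in \intr K$. Since each triangle of $\P_q$ has small diameter, $\oplus$ coincides with $+$ in a neighborhood of $(\x_0, \y_0)$, so I may pick open balls $U \ni \x_0$ and $V \ni \y_0$ with $U \subset \intr I$, $V \subset \intr J$, and $U + V \subset \intr K$. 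Then $U \times V \subset F(I,J,K)$, and the tight-subadditivity hypothesis encoded in $E(\pi, \P_q)$ specializes to the Cauchy-type equation $\pi(\x) + \pi(\y) = \pi(\x + \y)$ on the product $U \times V$.

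Next, I would invoke a two-dimensional Interval Lemma on this product. For each direction $\u \in \R^2$, restricting the Cauchy equation to line segments in $U$, $V$, and $U + V$ parallel to $\u$ reduces the situation to the classical one-dimensional Interval Lemma of Gomory and Johnson, which forces $\pi$ restricted to each such segment to be affine with matching slopes along $\u$. Applying this along two linearly independent directions yields that $\pi|_U$, $\pi|_V$, and $\pi|_{U+V}$ are all affine with a single common gradient $\g \in \R^2$.

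Finally, I would extend the affinity to the whole triangles. The argument above applies at any point of the relative interior of $F(I,J,K)$ whose three coordinate projections land in $\intr I$, $\intr J$, $\intr K$; by validity of the triple this set is non-empty and its projections cover the interiors of $I$, $J$, $K$. Since the gradients obtained on overlapping product neighborhoods must agree by the matching-slope property, and $\pi$ is continuous, a connectedness argument combined with the convexity of the triangles propagates the common gradient $\g$ to $\pi|_I$, $\pi|_J$, and $\pi|_K$. The hardest part will be this extension: one must verify that the valid-triple hypothesis genuinely forces the projections of $F(I,J,K)$ to cover the full triangles rather than only portions, and that the additive constants glue consistently across overlapping product neighborhoods. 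The unimodularity property of $\P_q$, which governs how Minkowski sums of triangles decompose into cells of the complex, is the key tool for both points.
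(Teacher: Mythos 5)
Your sketch follows essentially the same route as the paper's proof: find product balls inside the interiors, apply a higher-dimensional version of the Interval Lemma there, and propagate the affine linearity by chaining overlapping neighborhoods and using continuity. For the two points you flag as the hard part, the paper handles them as follows. The coverage claim is exactly \autoref{lemma:interiorDiag}(i), which shows that every $\u \in \intr(I)$ participates in a tight relation with some $\v \in \intr(J)$ and $\u\oplus\v \in \intr(K)$; the proof is a case analysis on which of the three linear functionals $(1,0)\cdot\x$, $(0,1)\cdot\x$, $(1,1)\cdot\x$ vanish modulo $1/q$, so the key tool here is the hyperplane-arrangement structure of $\P_q$ rather than unimodularity (unimodularity is what gives \autoref{lemma:I+J} and \autoref{lemma:vertices}, used elsewhere). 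The gluing/propagation is made uniform by introducing a radius function $r(\u)$, defined as the optimal value of a parametric LP, which is continuous and strictly positive on $\intr(I)$; its minimum over the compact segment $[\x_1,\x_2]$ gives a uniform $\epsilon > 0$, letting one cover the segment by finitely many overlapping $\ell_\infty$ balls on which \autoref{lem:generalized_interval_lemma} applies with $L = \R^2$. So your plan is sound; these are precisely the lemmas you would need to prove to complete it.
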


\begin{lemma}\label{cor:triangle+diagonal}
Suppose $\pi$ is a continuous function and let $(I, J, K) \in E(\pi, \P_q)$
where $I\in \Idiag$, $J,K \in \Idiag \cup \I_{q,2}$. 
%that is, $I$ is a diagonal edge, $J$ is a triangle or diagonal edge, $K$ is a triangle or diagonal edge.  
Then $\pi$ is affine in the diagonal direction in $I, J, K$,
i.e., there exists $c \in \R$  such that such that $\pi(\v + \lambda \ColVec{-1}{1}) = \pi(\v) + c\cdot\lambda$ for all $\v \in I$ (resp., $\v \in J$, $\v \in
K$) and $\lambda \in \R$ such that $\v + \lambda \ColVec{-1}{1} \in I$ (resp., $\v
+ \lambda \ColVec{-1}{1} \in J$, $\v + \lambda \ColVec{-1}{1} \in K$). 
\end{lemma}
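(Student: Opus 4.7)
The plan is to exploit the additivity $\pi(\x)+\pi(\y)=\pi(\x\oplus\y)$ on $F(I,J,K)$ by sliding both $\x$ and $\y$ in the direction $\ColVec{-1}{1}$ and then invoking a one-dimensional interval lemma. Validity of the triple $(I,J,K)$ ensures $F(I,J,K)$ has nonempty relative interior, so I can pick a base point $(\x_0,\y_0)\in F(I,J,K)$ with $\x_0$, $\y_0$, and $\x_0\oplus\y_0$ all in the relative interiors of $I$, $J$, and $K$.

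The core step is the substitution $(s,t)\mapsto (\x_0+s\ColVec{-1}{1},\,\y_0+t\ColVec{-1}{1})$. Because $I\in\Idiag$ is parallel to $\ColVec{-1}{1}$, the new $\x$ remains in $I$ for small $s$. Because $J$ is either a diagonal edge (parallel to $\ColVec{-1}{1}$) or a two-dimensional triangle containing $\y_0$ in its interior, the new $\y$ remains in $J$ for small $t$. And because $(\x_0+s\ColVec{-1}{1})\oplus(\y_0+t\ColVec{-1}{1})=\x_0\oplus\y_0+(s+t)\ColVec{-1}{1}$ moves along the diagonal chord of $K$ through the interior point $\x_0\oplus\y_0$, it remains in $K$ for small $(s,t)$. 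Setting
\[
 f(s)=\pi(\x_0+s\ColVec{-1}{1}),\qquad g(t)=\pi(\y_0+t\ColVec{-1}{1}),\qquad h(u)=\pi(\x_0\oplus\y_0+u\ColVec{-1}{1}),
\]
additivity reduces to $f(s)+g(t)=h(s+t)$ on a nondegenerate open rectangle in the $(s,t)$-plane, whence the classical one-dimensional interval lemma forces $f$, $g$, and $h$ to be affine on their intervals with a single common slope $c$.

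To propagate the conclusion across each face, I sweep the base point. For $I$, varying $s$ already traces out all of $I$. When $J$ (resp.\ $K$) is a diagonal edge, the single pass also covers $J$ (resp.\ $K$). When $J\in\I_{q,2}$ is a triangle, I shift $\y_0$ transversely to $\ColVec{-1}{1}$ inside $J$: each shifted $\y_0$ produces a fresh instance of the interval lemma, but the function $f$ and its slope $c$ do not change, so every diagonal chord of $J$ carries the slope $c$. An analogous transverse sweep within $K$, available when $J\in\I_{q,2}$ because then $\y_0\mapsto\x_0\oplus\y_0$ sweeps out a two-dimensional subregion of $K$, extends the statement to all diagonal chords of $K$ with the same slope $c$.

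The hardest part will be justifying, in each combination of dimensions for $J$ and $K$, that the domain on which $f(s)+g(t)=h(s+t)$ holds is genuinely two-dimensional, so that the interval lemma has nontrivial content, and that the transverse sweep of the base point across a two-dimensional face actually reaches every diagonal chord of that face. Both issues reduce to careful use of the validity of $(I,J,K)$ together with the positivity of edge lengths in $\P_q$; in particular, validity precludes $F(I,J,K)$ from being confined to the boundary of any of its coordinate faces, which is exactly what lets us pick the interior base point required above.
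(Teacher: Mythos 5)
Your approach matches what the paper intends for this lemma. The paper omits the proof with the remark that it is ``similar'' to the proof of Lemma~\ref{cor:triangle+triangle}, and in a commented-out note the authors indicate the intended argument: apply the Higher Dimensional Interval Lemma (Lemma~\ref{lem:generalized_interval_lemma}) with $L$ taken to be the span of $\ColVec{-1}{1}$, $U=I$, $V=J$. Since $L$ is one-dimensional, that specialization is exactly the reduction you make explicitly: parameterize by $s,t$ along the diagonal, get $f(s)+g(t)=h(s+t)$, and invoke the one-dimensional Interval Lemma (Lemma~\ref{one-dim-interval_lemma}). You also correctly observe that to cover the two-dimensional faces you must sweep the base point transversely and patch, which is precisely the mechanism used in the paper's written-out proof of Lemma~\ref{cor:triangle+triangle} (via the function $r(\u)$, a chain of parallelotopes $U_1,\dots,U_k$ with overlapping interiors, and the observation that the slope determined by the Interval Lemma is independent of the base point).

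Two technical points deserve more care than your write-up gives them, though neither is fatal and you flag the second yourself. First, even at a fixed base point $(\x_0,\y_0)$, the set of $(s,t)$ on which the additivity holds is not a full rectangle but the slice of $F(I,J,K)$ through $(\x_0,\y_0)$; it is cut off by the condition $\x_0\oplus\y_0+(s+t)\ColVec{-1}{1}\in K$. One must therefore take a smaller rectangle inside this slice (possible because $(\x_0,\y_0)$ is chosen with all three coordinates in the relevant relative interiors, using validity of the triple as in Lemma~\ref{lemma:interiorDiag}), and then patch. Second, the claim that ``varying $s$ already traces out all of $I$'' from a single base point is not literally true for the same reason; a chain of overlapping rectangles as in the paper's proof of Lemma~\ref{cor:triangle+triangle}, or equivalently a sweep of base points along $I$ together with the Interval Lemma's invariance of the slope, is needed. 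You acknowledge the latter issue, so I read the proposal as correct in approach but with the patching argument left informal — which is the same state of affairs as the paper's own (omitted) treatment of this lemma.
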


\begin{lemma} \label{obs:adjacent}
Let $I,J \in \I_{q,2}$ be triangles such that $I\cap J \in \Ivert \cup \Ihor$.  Let $\pi$ be a continuous function defined on $I\cup J$ satisfying the following properties:
%Let $I,J \in \I_{q,2}$ be triangles that share a vertical or horizontal edge. Let $\pi$ be a continuous function defined on $I\cup J$ satisfying the following properties :
\begin{itemize}
\item[(i)] $\pi$ is affine on $I$.
\item[(ii)] There exists $c\in \R$ such that $\pi(\v + \lambda \ColVec{-1}{1}) = \pi(\v) + c\cdot\lambda$ for all $\v \in J$ and $\lambda \in \R$ such that $\v + \lambda \ColVec{-1}{1} \in J$.
\end{itemize}
Then $\pi$ is affine on $J$.
\end{lemma}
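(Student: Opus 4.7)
The plan is to extend the affineness from $I$ across the shared edge to $J$ by combining it with the diagonal-direction condition~(ii). By continuity and hypothesis~(i), $\pi$ coincides with the affine function $\pi|_I$ on the shared edge $I \cap J$; since $I \cap J$ is an axis-parallel segment, $\pi|_{I \cap J}$ is an affine function of the single free coordinate of that segment.

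Next I would argue that for every $\v' \in J$, the line through $\v'$ with direction $\ColVec{-1}{1}$ meets $I \cap J$ in a unique point $\v$, and the segment from $\v$ to $\v'$ lies entirely in $J$. Since $\ColVec{-1}{1}$ is transverse to every horizontal or vertical line, the intersection point $\v$ with the line containing $I \cap J$ exists and is unique. To verify that $\v$ actually lies on the segment $I \cap J$ and that the connecting segment lies in $J$, one performs a short case analysis on the orientation of $J$ ($T_0$-type or $T_1$-type) crossed with the orientation of the shared edge (horizontal or vertical). In each of the four sub-cases, the remaining (non-shared) edge of $J$ has diagonal direction $\ColVec{-1}{1}$, so the family of diagonal translates of $I \cap J$ foliates $J$, with chord endpoints on the diagonal edge of $J$.

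With this foliation in hand, the conclusion is immediate: writing $\v' = \v + \lambda \ColVec{-1}{1}$ and applying~(ii) yields $\pi(\v') = \pi(\v) + c\lambda$. The map $\v' \mapsto (\v, \lambda)$ is affine (inverting a $2 \times 2$ linear system), and $\pi(\v)$ is affine in the free coordinate of $\v$ by the first step, so $\pi(\v')$ is affine in~$\v'$ on~$J$. The only substantive step is the geometric verification in the second paragraph, which is routine given the uniform structure of the triangulation~$\P_q$: every diagonal edge of every triangle in $\P_q$ has direction $\ColVec{-1}{1}$, so the diagonal sweep automatically terminates on that edge.
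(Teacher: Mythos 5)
Your proposal is correct and takes essentially the same approach as the paper: both establish that $\pi$ is affine on the shared axis-parallel edge via continuity and hypothesis~(i), then parametrize $J$ by position on that edge plus diagonal displacement, and use hypothesis~(ii) together with the affine change of coordinates to conclude. (One minor slip: the diagonal chords from a horizontal shared edge terminate on the vertical edge of $J$, not the diagonal edge—but this does not affect the argument, since all that matters is that the chords foliate $J$.)
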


%\subsection{determine if edges and triangles have relations}
%
%
%\begin{lemma}\label{lemma:testRelation}
%\texttt{Fill in properly} it is easy to test if $\pi(\x) + \pi(\y) = \pi(\x \oplus \y)$ for all $\x \in I, \y \in J$ and $\x \oplus \y \in K$. 
%\end{lemma}
%\begin{proof} \texttt{fill in properly}
% This is done by considering the considering the face of $\Delta \P_q$, which is $F = \{(x,y) \st x \in I, y \in J, x \oplus y \in K\}$ and noting by Lemma \ref{}, vertices $(x,y)$ of $F$ satisfy $x, y \in \frac{1}{q}\Z^2$.  Furthermore, we must have $x \in I, y \in J$.  Therefore, to test if the desired relations hold, we just need to test this relation whenever $x \in \verts(I), y\in \verts(J)$, and $x \oplus y \in K$. 
%\end{proof}
\section{Proof of the main results}
% In this section, we will show that either $\pi$ is not extreme, or $\pi$ imposes affine properties on $\pi^1$ and $\pi^2$.  

%Then there exists a $q \in \N$
%such that $G = \langle \B \rangle_\Z =  \tfrac{1}{q} \Z $.  Let $\hat G =
%\tfrac{1}{4q} \Z$.  We will think of $\pi$ as a piecewise linear function with
%breakpoints in $G$. 
%We need the following definition.
Let $\partial_\v$ denote the directional derivative in the direction of $\v$.   
\begin{definition}
  Let $\pi$ be a minimal valid function.
  \begin{enumerate}[\rm(a)]
  \item For any $I \in \P_q$, if $\pi$ is affine in
    $I$ and if for all valid functions $\pi^1, \pi^2$ such that $\pi =
    \tfrac{1}{2} \pi^1 + \tfrac{1}{2} \pi^2$ we have that $\pi^1, \pi^2$ are
    affine in $I$, then we say that $\pi$ is \emph{affine imposing in
      $I$}.
\item For any $I \in \P_q$, if $\partial_{(-1,1)}\pi$ is constant  in
    $I$ and if for all valid functions $\pi^1, \pi^2$ such that $\pi =
    \tfrac{1}{2} \pi^1 + \tfrac{1}{2} \pi^2$ we have that $\partial_{(-1,1)}\pi^1$, $\partial_{(-1,1)}\pi^2$ are
    constant in $I$, then we say that $\pi$ is \emph{diagonally affine imposing in
      $I$}.
  \item For a collection $\I\subseteq \P_q$, if for all
    $I \in \I$, $\pi$ is affine imposing (or diagonally affine imposing) in $I$, then we say that $\pi$ is
    \emph{affine imposing (diagonally affine imposing) in $\I$}.
  \end{enumerate}
\end{definition}

We either show that $\pi$ is affine imposing in $\P_q$
(subsection~\ref{section:AI}) or construct a continuous piecewise linear
$\Gamma$-equivariant perturbation over $\P_{4q}$ that proves
$\pi$ is not extreme (subsections \ref{sec:non-extreme-by-perturbation} and
\ref{sec:non-extreme-by-diag-perturbation}).  
If $\pi$ is affine imposing in $\P_q$, we set up a system of linear
equations to decide if~$\pi$ is extreme or not
(subsection~\ref{section:system}).   
This implies the main theorem stated in
the introduction.

\subsection{Imposing affine linearity on faces of~$\P_q$}
\label{section:AI}
For the remainder of this paper, we will use reflections and translations
modulo 1 to compensate for the fact that our function is periodic with period
$1$.  Working modulo $1$ is accounted for by applying the translations~$\tau_{(1,0)}$ and $\tau_{(0,1)}$ whenever needed.  Hence, we define the reflection ${\bar\rho}_\v(\x) = \v \ominus \x$ and the translation ${\bar\tau}_\v(\x) = \v \oplus \x$.  % Now we define sets that can be thought of as domains for these functions.  
% Let $$D_{{\bar\rho}_\v} = \{\,\x \in [0,1]^2 \st \Delta\pi(\x,\v\ominus \x) = \pi(\x)
% + \pi(\v\ominus \x) - \pi(\v) = 0\,\}$$ and let 
% $$D_{{\bar\tau}_\v} = \{\,\x \in
% [0,1]^2 \st \Delta\pi(\x,\v) = \pi(\x) + \pi(\v) - \pi(\x\oplus \v) = 0\,\}.$$
The reflections and translations arise from certain valid triples as follows.
\begin{lemma} \label{lemma:translate}
Suppose $(I,J,K)$ is a valid triple.  
\begin{enumerate}
\item If $K= \{\a\} \in \I_{q,0}$, then $J = \bar \rho_\a(I)$,
\item If $J = \{\a\} \in \I_{q,0}$, then $K = \bar \tau_\a(I)$.
\end{enumerate}
\end{lemma}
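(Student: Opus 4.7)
The plan is to use the definition of valid triple directly: since none of $I$, $J$, $K$ can be individually decreased without changing $F(I,J,K)$, I will exhibit smaller candidate faces and let validity force equality.

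The key preliminary observation, an instance of the unimodularity of $\P_q$, is that for every vertex $\a\in\I_{q,0}$, both $\bar\rho_\a$ and $\bar\tau_\a$ are automorphisms of the polyhedral complex $\P_q$, carrying each face to a face of the same dimension. For $\bar\tau_\a$ this is immediate because $\a\in\tfrac1q\Z^2$ modulo $\Z^2$ and $\P_q$ is invariant under translations by $\tfrac1q\Z^2$. For $\bar\rho_\a$ it suffices to check on the reference triangles, where $-T_0\equiv T_1\pmod{\Z^2}$; hence reflection through any vertex $\a$ permutes the triangles of $\P_q$, and correspondingly its edges and vertices. As a consequence, intersections of the form $I\cap\bar\rho_\a(J)$ or $I\cap\bar\tau_\a^{-1}(K)$ are again faces of $\P_q$.

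For part~(1), the equation $\x\oplus\y=\a$ forces $\y=\bar\rho_\a(\x)$, so
\[
  F(I,J,K) = \{\,(\x,\bar\rho_\a(\x))\st \x\in I\cap\bar\rho_\a(J)\,\} = \{\,(\bar\rho_\a(\y),\y)\st \y\in J\cap\bar\rho_\a(I)\,\}.
\]
Therefore replacing $I$ by $I\cap\bar\rho_\a(J)$, or $J$ by $J\cap\bar\rho_\a(I)$, leaves $F(I,J,K)$ unchanged, and both substitutes are faces of $\P_q$. Validity of $(I,J,K)$ then forces $I\subseteq\bar\rho_\a(J)$ and $J\subseteq\bar\rho_\a(I)$; since $\bar\rho_\a$ is an involution these two inclusions are equivalent, and together they give $J=\bar\rho_\a(I)$. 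Part~(2) is entirely analogous with $\bar\tau_\a$ in place of $\bar\rho_\a$: the constraint $\x\oplus\a\in K$ forces the $K$-coordinate to equal $\bar\tau_\a(\x)$, so $F(I,J,K)$ is unchanged when $I$ is replaced by $I\cap\bar\tau_\a^{-1}(K)$ or $K$ by $K\cap\bar\tau_\a(I)$, and the same validity argument yields $K=\bar\tau_\a(I)$.

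The only real obstacle is the automorphism/intersection claim about $\P_q$, which is a short unimodularity check (this is the strong unimodularity property of $\P_q$ already anticipated in the introduction). Granted that, each part reduces to a one-step application of the definition of a valid triple.
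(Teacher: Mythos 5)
Your proof is correct, but it routes through the minimality characterization (iv) rather than through the constructive conditions (ii) and (iii) of Definition~\ref{def:valid-triple}, which is what the paper's (much shorter) proof does: condition (ii) immediately gives $\bar\rho_\a(I)\subseteq J$, condition (iii) gives $J\subseteq\bar\rho_\a(I)$, and that is all of Part~(1). Two remarks on your version. First, the preliminary paragraph about $\bar\rho_\a$, $\bar\tau_\a$ being automorphisms of $\P_q$ is unnecessary: the characterization (iv) following Definition~\ref{def:valid-triple} is stated for arbitrary \emph{sets} $I',J',K'$, not for faces of $\P_q$, so you may shrink $I$ to $I\cap\bar\rho_\a(J)$ and invoke (iv) without needing that intersection to be a face. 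The unimodularity digression you flag as ``the only real obstacle'' is thus a non-obstacle. Second, your assertion that the inclusions $I\subseteq\bar\rho_\a(J)$ and $J\subseteq\bar\rho_\a(I)$ are ``equivalent'' because $\bar\rho_\a$ is an involution is not right: applying $\bar\rho_\a$ to the first gives $\bar\rho_\a(I)\subseteq J$, which is the \emph{reverse} of the second, not the same. The two together (because they are opposite, not equivalent) give $J=\bar\rho_\a(I)$, so your conclusion still holds, but the stated justification is off. In short, the argument is sound but longer and carries a gratuitous unimodularity detour relative to the paper's direct verification from (ii)--(iii).
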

\begin{proof}
\emph{Part 1.} Since $(I,J,\{\ve a\})$ is a valid triple, then for all $ \x \in I$, there exists a $\y \in J$ such that $x \oplus \y = \a$, i.e., $\y = \a \ominus \x \in J$, and therefore $J \supseteq \bar \rho_\a(I)$.  Also, for all $\y \in J$, there exists a $\x \in I$ such that $\x \oplus \y = \a$.  Again, $\y = \a \ominus \x$, i.e., $J \subseteq \bar \rho_\a(I)$.  Hence, $J = \bar\rho_\a(I)$.

\emph{Part 2.} Since $(I,\{\ve a\},K)$ is a valid triple and $J$ is a singleton, then for all $ \x \in I$, we have $\x \oplus \a \in K$, i.e., $K \supseteq \bar\tau_\a(I)$.  Also, for all $\ve z \in K$, there exists a $\x \in I$ such that $\x \oplus \a = \ve z$, i.e., $K \subseteq \bar\tau_\a(I)$.  Hence, $K =\bar\tau_\a(I)$.
\end{proof}

  Let $\G = \G(\I_{q,2},\E)$ be an  undirected graph with node set $\I_{q,2}$ and edge set $\E = \E_0 \cup \E_\diag$  where $\{I, J\} \in \E_0$ ($\{I, J\} \in \E_\diag$) if and only if  for some $K \in \I_{q,0}$ ($ K \in \Idiag$), we have $(I,J,K) \in E(\pi,\P_q)$ or $(I,K,J) \in E(\pi,\P_q)$. 
For each $I \in \I_{q,2}$, let $\G_I$ be the connected component of $\G$ containing $I$.  

We now consider faces of $\I_{q,2}$ on which we will apply lemmas from \autoref{s:real-analysis}.  
$$
\begin{array}{l}
\I^1_{q,2} = \{\,  I,J \in \P_{q,2} \st \exists K \in \Idiag \text{ with } (I,J,K) \in E(\pi,\P_q) \text{ or  } (I,K,J) \in E(\pi, \P_q) \,\},
\end{array}
$$
$$
\I^2_{q,2} = \{\, I,J,K\in \I_{q,2} \st (I,J,K) \in E(\pi, \P_q) \,\}.
$$
It follows from \autoref{cor:triangle+triangle} that $\pi$ is affine imposing in $\I^2_{q,2}$ and from \autoref{cor:triangle+diagonal} that $\pi$ is diagonally affine imposing in $\I^1_{q,2}$.  

Faces connected in the graph have related slopes.

\begin{lemma}
\label{lemma:point-and-line-Lemma}
Let $\v \in \R^2$.  For $\theta = \pi, \pi^1$, or $\pi^2$, 
if $\theta$ is affine in the $\v$ direction in $I$, i.e., there exists $c \in \R$  such that such that $\pi(\x + \lambda \v) = \pi(\x) + c\cdot\lambda$ for all $\x \in I$ and $\lambda \in \R$ such that $\x + \lambda \v \in I$, and $\{I,J\} \in \E$, then $\theta$ is affine in the $\v$ direction in $J$ as well.
%
%if $\partial_\v \theta$ is constant in $I$, and $\{I,J\} \in \E_0$, then $\partial_\v \theta$ is constant in $J$ as well with the same slope.  
\end{lemma}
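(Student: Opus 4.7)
The plan is to split according to the decomposition $\E=\E_0\cup\E_\diag$, and to exploit the fact that any subadditivity relation that is tight for $\pi$ is automatically tight for $\pi^1$ and $\pi^2$ (since both are subadditive by \autoref{Theorem:functionContinuous} and $\pi=\tfrac12(\pi^1+\pi^2)$), so the argument can be carried out uniformly for $\theta\in\{\pi,\pi^1,\pi^2\}$. For an $\E_0$-edge, \autoref{lemma:translate} identifies $J$ as either the point-reflection $\bar\rho_\a(I)$ or the translation $\bar\tau_\a(I)$ of $I$ through a vertex $\a\in\I_{q,0}$; the corresponding tight identity (either $\theta(\x)+\theta(\a\ominus\x)=\theta(\a)$ or $\theta(\x)+\theta(\a)=\theta(\x\oplus\a)$) then converts the affineness of $\theta$ in direction $\v$ on $I$ into the same statement on $J$ with the same constant $c$ by a one-line substitution. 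This is the elementary fact that both point reflections and translations preserve directional slopes.

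For an $\E_\diag$-edge, I would invoke \autoref{cor:triangle+diagonal} (modulo a permutation of the slots of the valid triple using the $\x\leftrightarrow\y$ swap of subadditivity or the symmetry of $\pi$) to conclude that $\theta$ is affine in the diagonal direction $\ColVec{-1}{1}$ on each of $I$, $J$, $K$ with a common slope. If $\v$ is parallel to $\ColVec{-1}{1}$ the conclusion is immediate; otherwise $\v$ and $\ColVec{-1}{1}$ are linearly independent, and combining affineness of $\theta$ in both directions on the triangle $I$ with the Lipschitz continuity from \autoref{Theorem:functionContinuous} forces $\theta$ to be fully affine on $I$, say $\theta|_I(\x)=\g\cdot\x+b$ with $\g\cdot\v=c$. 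To transfer this across the edge, I would fix $\y_0\in J$, pick $\x_0\in I$ with $\x_0\oplus\y_0\in K$, and for small $\lambda$ choose $t$ so that $\x_0+t\ColVec{-1}{1}-\lambda\v\in I$ while $(\x_0+t\ColVec{-1}{1}-\lambda\v)\oplus(\y_0+\lambda\v)=\x_0\oplus\y_0+t\ColVec{-1}{1}\in K$. Substituting into the tight identity and using affineness of $\theta$ on $I$ together with diagonal affineness on $K$ then produces $\theta(\y_0+\lambda\v)-\theta(\y_0)=c\lambda$ in a neighborhood of $\y_0$, which extends to all of $J$ by convexity and continuity.

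The main obstacle is precisely this last step in the $\E_\diag$ case when $\v$ is not parallel to $\ColVec{-1}{1}$: unlike the vertex case, no rigid motion carries $I$ onto $J$, so the transfer of affineness must be routed through the 3-dimensional tight set $F(I,J,K)$, simultaneously moving $\y$ in direction $\v$ and sliding $\x$ along $\ColVec{-1}{1}$ to keep $\x\oplus\y$ on the 1-dimensional edge $K$. The remaining pieces—finding the right permutation to apply \autoref{cor:triangle+diagonal}, the elementary fact that Lipschitz affineness in two independent directions on a convex set upgrades to full affineness, and the local-to-global extension on $J$—are routine.
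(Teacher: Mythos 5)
Your $\E_0$ case matches the paper's Case (i) exactly. For the $\E_\diag$ case you take a genuinely different and more elaborate route than the paper. The paper's Case (ii) is a direct generalization of Case (i), signaled by the citation of Lemma~\ref{lemma:interiorDiag}: given $\u\in\intr(J)$, use Lemma~\ref{lemma:interiorDiag} to find $\x_0\in\intr(I)$ and $\ve w_0\in\relint(K)$ with $\x_0\oplus\u=\ve w_0$ (after the $\x\leftrightarrow\y$ swap if needed), and then do a \emph{one-parameter} slide that keeps $\ve w_0$ fixed: replace $(\x_0,\u)$ by $(\x_0-\lambda\v,\u+\lambda\v)$, which remains in $F(I,J,K)$ for small $\lambda$ since $\x_0\oplus\u$ is unchanged. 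Substituting into $\theta(\x)+\theta(\y)=\theta(\x\oplus\y)$ and using only the hypothesized $\v$-direction affineness on $I$ gives $\theta(\u+\lambda\v)-\theta(\u)=c\lambda$, with no appeal to Lemma~\ref{cor:triangle+diagonal}, no upgrade to full affineness on $I$, and no case split on whether $\v$ is parallel to $\ColVec{-1}{1}$. (The second sub-case, $(I,K,J)\in E(\pi,\P_q)$ with $K\in\Idiag$, is handled symmetrically via Lemma~\ref{lemma:interiorDiag}(ii).) In contrast, you invoke Lemma~\ref{cor:triangle+diagonal} to get diagonal affineness on $I,J,K$, upgrade to full affineness on $I$ when $\v$ is not diagonal, and then run a two-parameter slide $\x_0+t\ColVec{-1}{1}-\lambda\v$; with $t=0$ the $\ColVec{-1}{1}$-sliding and the diagonal-slope bookkeeping on $K$ simply drop out. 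Two small points worth flagging: (a) you pick $\x_0\in I$ with $\x_0\oplus\y_0\in K$ but never note that you need $\x_0$ (and $\y_0$) interior to make the slide legal for small $\lambda,t$ — this is exactly the gap Lemma~\ref{lemma:interiorDiag} closes and which the paper cites for this reason; (b) your ``permutation of slots'' remark glosses over the fact that a valid triple is symmetric only in its first two slots, so moving the diagonal from the second slot to the third requires the symmetry-of-$\pi$ trick and a reflection by $\f$, which is more delicate than a swap. Your proof is correct modulo these details, but the paper's route is strictly simpler.
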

The proof appears in appendix \ref{s:transferring-affine-linearity}.\smallbreak

With this in mind, we define the two sets of faces and any faces connected to them in the graph~$\G$,
$$
\S^1_{q,2} =\{\,J \in \I_{q,2} \st J \in \G_I \text{ for some } I \in \I^1_{q,2}\,\},
$$
$$
\S^2_{q,2} =\{\,J \in \I_{q,2} \st J \in \G_I \text{ for some } I \in \I^2_{q,2}\,\}.
$$
It follows from Lemma \ref{lemma:point-and-line-Lemma} that $\pi$ is affine imposing in $\S^2_{q,2}$ and diagonally affine imposing in $\S^1_{q,2}$.  

From \autoref{obs:adjacent}, it follows that if $I \in \S^2_{q,2}$, $J \in \S^1_{q,2}$ and $I\cap J \in \Ivert \cup \Idiag$, then $\pi$ is affine imposing in $J$.
Let 
$$\bar \S_{q,2} = \{ K \in \G_I \st I \in \S^1_{q,2} \text{ and there exists a } J \in \S^2_{q,2} \text{ such that } I \cap J \in \Ivert \cup \Ihor \}.$$  

Now set $\bar \S^2_{q,2} = \S^2_{q,2} \cup \bar \S_{q,2}$ and $\bar \S^1_{q,2} = \S^1_{q,2} \setminus \bar \S_{q,2}$.
The following theorem is a consequence of Lemmas \ref{cor:triangle+triangle},
\ref{obs:adjacent}, and \ref{lemma:point-and-line-Lemma}.
\begin{theorem}
\label{theorem:AI}
If $\bar \S^2_{q,2} = \I_{q,2}$, then $\pi$ is affine imposing in $\I_{q,2}$, and therefore $\theta$ is continuous piecewise linear over $\P_q$ for $\theta = \pi^1, \pi^2$.  %% Furthermore, for each face $I \in \I^2_{q,2}$, $\theta$ has the same slope in $J$ for every interval $J \in \G_I$ and $\theta = \pi, \pi^1, \pi^2$.
\end{theorem}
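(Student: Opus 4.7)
The plan is a straightforward chase through the layered definitions of $\I^i_{q,2}$, $\S^i_{q,2}$, $\bar\S_{q,2}$, and $\bar\S^2_{q,2}$. Fix any decomposition $\pi = \tfrac12\pi^1 + \tfrac12\pi^2$ into valid functions. By \autoref{Theorem:functionContinuous}, $\pi^1$ and $\pi^2$ are minimal and Lipschitz continuous, so the subadditivity inequalities $\pi^i(\x) + \pi^i(\y) \ge \pi^i(\x\oplus\y)$ must hold with equality wherever the corresponding inequality for $\pi$ does. In particular, every valid triple in $E(\pi,\P_q)$ is also a tight triple for $\pi^1$ and $\pi^2$, so each real-analysis lemma in Section~\ref{s:real-analysis} can be applied to $\pi^1$ and $\pi^2$ on the same data that produced $E(\pi,\P_q)$.

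The first step seeds the two ``affineness'' properties on the innermost sets. Applying \autoref{cor:triangle+triangle} to $\pi^i$ on every triangle-triangle-triangle valid triple shows that $\pi^i$ is affine on every $I \in \I^2_{q,2}$, so $\pi$ is affine imposing on $\I^2_{q,2}$. Analogously, \autoref{cor:triangle+diagonal} applied to $\pi^i$ makes $\partial_{(-1,1)}\pi^i$ constant on every $I \in \I^1_{q,2}$, so $\pi$ is diagonally affine imposing on $\I^1_{q,2}$. Next, propagation along $\G$ via \autoref{lemma:point-and-line-Lemma} transports each property to the corresponding graph closure: since an affine function on a triangle is affine in every direction, applying the lemma to two linearly independent directions transports full affineness of $\pi^i$ from $\I^2_{q,2}$ to all of $\S^2_{q,2}$; applying it to the single direction $(-1,1)$ transports the constant diagonal derivative from $\I^1_{q,2}$ to all of $\S^1_{q,2}$.

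Now I upgrade diagonally affine imposing to affine imposing along the bridges collected in $\bar\S_{q,2}$. By definition, each $K \in \bar\S_{q,2}$ lies in $\G_I$ for some $I \in \S^1_{q,2}$ that shares a horizontal or vertical edge with some $J \in \S^2_{q,2}$. By the previous step, $\pi^i$ is affine on $J$ and has constant diagonal derivative on $I$, so \autoref{obs:adjacent} (applied to $\pi^i$ on $I \cup J$) concludes that $\pi^i$ is in fact affine on~$I$. One more application of \autoref{lemma:point-and-line-Lemma} in two independent directions transports this full affineness from $I$ through $\G_I$ to $K$. Hence $\pi^i$ is affine on every $K \in \bar\S_{q,2}$, and therefore on $\bar\S^2_{q,2} = \S^2_{q,2} \cup \bar\S_{q,2}$.

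Finally, the hypothesis $\bar\S^2_{q,2} = \I_{q,2}$ gives that $\pi^i$ is affine on every triangle of $\P_q$; together with the continuity of $\pi^i$ guaranteed by \autoref{Theorem:functionContinuous}, this means $\pi^1$ and $\pi^2$ are continuous piecewise linear over $\P_q$, which is both conclusions of the theorem. The argument is essentially bookkeeping once the real-analysis lemmas are on the table; the only step that is not purely formal propagation is the bridging via \autoref{obs:adjacent}, which converts one-dimensional (diagonal) affineness on $I$ into full two-dimensional affineness by borrowing the affine behavior of an adjacent triangle $J$ across a vertical or horizontal edge. Getting the set-theoretic dependencies between $\I^i$, $\S^i$, $\bar\S$, and $\bar\S^2$ right in the correct order is therefore the only place where care is needed.
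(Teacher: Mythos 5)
Your proof is correct and follows the same logical chain that the paper intends (the paper gives no explicit proof, stating only that the theorem is a consequence of Lemmas \ref{cor:triangle+triangle}, \ref{obs:adjacent}, and \ref{lemma:point-and-line-Lemma}). You fill in the bookkeeping correctly: you invoke Theorem~\ref{Theorem:functionContinuous} to justify continuity of $\pi^1,\pi^2$ (needed for the real-analysis lemmas), seed affineness on $\I^2_{q,2}$ and diagonal affineness on $\I^1_{q,2}$ via the corollaries of the interval lemma, propagate through the connected components of~$\G$ using Lemma~\ref{lemma:point-and-line-Lemma} (noting that the lemma holds for an arbitrary direction $\v$, so transferring it along two independent directions carries full affineness across both $\E_0$ and $\E_\diag$ edges), bridge to $\bar\S_{q,2}$ via Lemma~\ref{obs:adjacent}, and then re-propagate full affineness through $\G_I$. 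The one place you could sharpen the write-up is to note explicitly why constant directional derivatives in two independent directions plus Lipschitz continuity force affineness on a triangle; the Rademacher/absolute-continuity argument (or simply the explicit reflection/translation identity $\pi^i_J(\x) = \pi^i_K(\a) - \pi^i_I(\a\ominus\x)$ from the proof of Lemma~\ref{lemma:point-and-line-Lemma}, which transports the whole affine formula at once across $\E_0$ edges) closes this gap, but the conclusion is not in doubt.
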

%\begin{proof}
%From Lemma \ref{lemma:pointLemma}, it follows that if $\pi$ is affine imposing in $I$ and $\{I,J\} \in \E$, then $\pi$ is affine imposing in $J$.
%Let $J \in \I_{G,1} = \S_G$.   Then $J$ must be in a connected component containing an interval $I \in \I^{2}_{G,1}$.  
%By induction then, $\pi$ is affine imposing in each interval that is a node of the connected component $\G_I$, and therefore is affine imposing in $J$.  
%We conclude that $\pi$ is affine imposing in $\I_{G,1}$. 
%It follows directly from Lemma \ref{lemma:pointLemma} that for each interval $I \in \I^2_{G,1}$, $\theta$ has the same slope in $J$ for every interval $J \in \G_I$ and $\theta = \pi, \pi^1, \pi^2$.
%\end{proof}

%The function in Figure \ref{figure:flatFunction} illustrates how intervals can
%be connected in~$\G$ and that intervals with the same slope are not necessarily connected.

\subsection{Non-extremality by two-dimensional equivariant perturbation}
\label{sec:non-extreme-by-perturbation}

In this and the following subsection, we will prove the following result.

\begin{lemma}
  \label{lemma:not-extreme}
  Let $\pi$ be a minimal, continuous piecewise linear function over $\P_q$ that is diagonally constrained.  If $\bar \S^2_{q,2} \neq \I_{q,2}$, then $\pi$ is not extreme.
\end{lemma}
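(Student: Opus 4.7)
The plan is to construct a nonzero continuous piecewise linear perturbation $\bar\pi$ over the refinement $\P_{4q}$ such that $\pi^{1,2}:=\pi\pm\bar\pi$ are both valid and distinct from $\pi$, exhibiting $\pi$ as a nontrivial convex combination. Since $\bar\S^2_{q,2}\ne\I_{q,2}$, pick $I^*\in\I_{q,2}\setminus\bar\S^2_{q,2}$ and let $C=\G_{I^*}$ denote its connected component in $\G$. By construction $C$ avoids $\S^2_{q,2}$, and by the definition of $\bar\S_{q,2}$ any triangle of $C\cap\S^1_{q,2}$ meets a triangle of $\S^2_{q,2}$ only along a diagonal edge. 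This subsection handles the case $C\cap\S^1_{q,2}=\emptyset$, in which $\pi$ imposes neither affine nor diagonally affine structure on $C$; the complementary case, in which $\pi$ is diagonally affine imposing on $C$ so the perturbation must be constant along diagonals, is the subject of the next subsection.

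On $I^*$ choose a tent function $\psi\colon I^*\to\R$, piecewise linear over $\P_{4q}$, vanishing on $\partial I^*$, and nonzero in the interior. Extend $\psi$ to a function $\bar\pi$ on $\bigcup_{K\in C}K$ by $\Gamma$-equivariance: each edge $\{I,J\}\in\E$ of $\G$ comes from a valid triple whose degenerate face $K\in\I_{q,0}\cup\Idiag$ induces, via \autoref{lemma:translate} (when $K$ is a vertex) or an analogous diagonal transport (when $K\in\Idiag$), an affine isometry $\varphi\colon I\to J$; declare $\bar\pi|_J=\sigma\,\bar\pi|_I\circ\varphi^{-1}$ with $\sigma\in\{\pm 1\}$ chosen so that $\bar\pi(\x)+\bar\pi(\y)=\bar\pi(\x\oplus\y)$ holds on $F(I,J,K)$. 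Extend $\bar\pi$ by zero to the remaining triangles of $\P_{4q}$ and take a $\Z^2$-periodic copy. Then verify: (i)~continuity of $\bar\pi$, from $\psi|_{\partial I^*}=0$ together with vanishing of the transported copies along the boundary of $\bigcup_{K\in C}K$; (ii)~$\bar\pi\equiv 0$ on $\Z^2$ and the symmetry $\bar\pi(\x)+\bar\pi(\f\ominus\x)=0$, coming from equivariance under the reflection $\bar\rho_\f$ encoded by the valid triple $(\{\x\},\{\f\ominus\x\},\{\f\})$; (iii)~the additivity $\bar\pi(\x)+\bar\pi(\y)=\bar\pi(\x\oplus\y)$ on all of $E(\pi)$, which by \autoref{lemma:covered-by-maximal-valid-triples} reduces to a check on the finitely many maximal valid triples of $\P_q$; and (iv)~validity of $\pi\pm\epsilon\bar\pi$ for a sufficiently small $\epsilon>0$, where non-negativity and preservation of the finitely many strict subadditivities of $\pi$ follow from the Lipschitz bound of \autoref{Theorem:functionContinuous} and compactness of $\R^2/\Z^2$.

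The main obstacle is the consistency of the $\Gamma$-equivariant extension: traversing any closed loop of edges in $C$, the composition of the induced transports and signs must fix the profile $\psi$ chosen on $I^*$, and an unforeseen stabilizer relation could force $\bar\pi\equiv 0$. Here the diagonally constrained hypothesis is essential. It guarantees that every transport map is a lattice-affine automorphism of $\P_q$ generated by the diagonal reflections $\bar\rho_\a$ and translations $\bar\tau_\a$ (rather than by any horizontal or vertical flips), so the stabilizer of $I^*$ is a small, well-understood subgroup of the unimodular group preserving $\P_q$. Passing from $\P_q$ to the refinement $\P_{4q}$ supplies enough additional degrees of freedom in the choice of $\psi$ to lie outside every invariant subspace of this stabilizer, ensuring $\bar\pi\not\equiv 0$ and hence the non-extremality of $\pi$.
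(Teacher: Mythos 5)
Your proposal correctly identifies the overall strategy and the two-way case split — propagate an equivariant perturbation supported on the graph component $\G_{I^*}$, handle the ``free'' component here, defer the diagonally-constrained component to a second perturbation — and this matches the paper's decomposition into Lemmas~\ref{lemma:not-extreme1} and~\ref{lemma:not-extreme2}. But the construction you propose is different from the paper's in a way that creates genuine gaps.

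The paper does not propagate a tent function from $I^*$ via transport maps along the edges of $\G$. Instead, it defines one explicit, globally $\Gamma_0$-equivariant function $\psi$ (Lemma~\ref{lemma:our-equivariant-psi}) on all of $\R^2$, by specifying its values at the $\P_{4q}$-vertices inside a single $\P_q$-triangle and extending by~\eqref{eq:equivariance}, and then takes $\bar\pi = \delta_R\cdot\psi$. Because $\psi$ is defined once and for all, equivariantly, the ``loop consistency'' problem you flag — whether different chains of transports from $I^*$ to $J$ assign the same profile on $J$ — simply does not arise: $\bar\pi|_J$ is $\psi|_J$ or $0$ by fiat. In your construction this consistency genuinely must be proved, and you do not prove it; you gesture at stabilizers and invariant subspaces. (In fact the stabilizer of a $\P_q$-triangle in $\Gamma_0$ is trivial, so the claim is true, but that argument is absent.) Likewise, the role of $\P_{4q}$ is not to ``supply degrees of freedom outside invariant subspaces'': the precise point is that any $\Gamma_0$-equivariant continuous function must vanish at all reflection fixed points, which lie in $\tfrac1{2q}\Z^2$, so on $\P_q$ or $\P_{2q}$ the only such function is $0$; on $\P_{4q}$ there are interior vertices avoiding $\tfrac1{2q}\Z^2$ at which one can set $\psi = 1$.

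Separately, your items (iii) and (iv) are stated as things to ``verify,'' but these verifications are the entire substance of the paper's proof and are nontrivial. For (iii), the paper reduces the additivity check to vertices of $\Delta\P_{4q}$ (Lemma~\ref{lemma:vertices}) and then runs a case analysis over the shapes of maximal valid triples allowed by Lemma~\ref{lemma:cases}, using the diagonally-constrained hypothesis to rule out horizontal/vertical edge triples and the properties (ii)--(iii) of Lemma~\ref{lemma:our-equivariant-psi} to conclude $\Delta\bar\pi(\u,\v)=0$ case by case. For (iv), the correct argument is not Lipschitz-plus-compactness: $\Delta\pi$ is affine on each cell of $\Delta\P_{4q}$, so one sets $\epsilon$ to the smallest \emph{positive} value of $\Delta\pi$ over the finitely many vertices of $\Delta\P_{4q}$ and scales $\bar\pi$ by $\epsilon/3$; your appeal to Theorem~\ref{Theorem:functionContinuous} addresses continuity of $\pi^1,\pi^2$, not the preservation of subadditivity. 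Finally, the ``analogous diagonal transport'' you invoke when $K\in\Idiag$ is not an affine isometry $I\to J$ (the constraint $\x\oplus\y\in K$ cuts out a correspondence, not a bijection); in the paper the diagonal case requires a genuinely different perturbation $\varphi$ built to be $\Gamma_\diag$-equivariant, which is why it occupies a separate lemma.
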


In the proof, we will need two different equivariant perturbations that we construct as
follows (see \autoref{s:equivariant}).  
Let $\Gamma_0 = \langle \,\rho_\g, \tau_\g \st \g \in \frac{1}{q}\Z^2\,\rangle$ be the group
generated by reflections and translations corresponding to all possible
vertices of $\P_q$.  We define the function $\psi\colon \R^2 \to \R$ as a
continuous piecewise linear function over $\P_{4q}$  in the following way: let
$T_0 = \frac1q \conv(\{ \begin{psmallmatrix}0\\0\end{psmallmatrix}
, \begin{psmallmatrix}1\\0\end{psmallmatrix}
, \begin{psmallmatrix}0\\1\end{psmallmatrix}
 \})$, and at all vertices
of $\P_{4q}$ that lie in~$T_0$, let $\psi$ take the value $0$, except at the
interior vertices $\frac{1}{4q}\begin{psmallmatrix}1\\1\end{psmallmatrix},
\frac{1}{4q}\begin{psmallmatrix}2\\1\end{psmallmatrix},
\frac{1}{4q}\begin{psmallmatrix}1\\2\end{psmallmatrix}$, where we assign
$\psi$ to have the value $1$.  Since $\psi$ is continuous piecewise linear
over $\P_{4q}$, this uniquely determines the function on $T_0$.   
We then extend $\psi$ to all of~$\R^2$ using the equivariance formula~\eqref{eq:equivariance}.
%See Figure \ref{figure:phi}.
%Let $G = \frac{1}{q} \Z^2$ and let $\hat G = \frac{1}{4q}\Z^2$.   
\begin{lemma}\label{lemma:our-equivariant-psi}
  The function $\psi\colon\R^2\to\R$ constructed above is well-defined and has the following properties:
  \begin{enumerate}[\rm(i)]
  \item $\psi(\g) = 0$ for all $\g\in \frac{1}{q}\Z^2$,
  \item $\psi(\x)= - \psi(\rho_{\g}(\x)) = - \psi(\g - \x)$ for all $\g \in \frac{1}{q}\Z^2, \x \in
    [0,1]^2$,
  \item $\psi(\x) = \psi(\tau_\g(\x)) = \psi(\g + \x)$ for all $\g \in \frac{1}{q}\Z^2, \x \in
    [0,1]^2$,
  \item $\psi$ is continuous piecewise linear over $\P_{4q}$.
  \end{enumerate}
\end{lemma}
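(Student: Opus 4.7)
The plan is to build $\psi$ in two stages---first extend from $T_0$ to the fundamental cell $[0, 1/q]^2 = T_0 \cup T_1$ using a single reflection, then periodically tile $\R^2$ by translations in $\frac{1}{q}\Z^2$---and only afterward verify each of the four listed properties.

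First I would observe that $T_1 = \rho_{(1/q,1/q)}(T_0)$, so we may define $\psi(\y) := -\psi(\rho_{(1/q,1/q)}(\y))$ for $\y \in T_1$.  The only consistency check is on the shared diagonal edge, where $\rho_{(1/q,1/q)}$ acts as an involution.  Since every vertex of $\P_{4q}$ on $\partial T_0$ carries the value $0$ by construction, piecewise linear interpolation gives $\psi \equiv 0$ on $\partial T_0$; in particular $\psi \equiv 0$ on the shared diagonal, which is exactly what the reflection rule predicts ($-0 = 0$).  Extending by $\psi(\y + \g) := \psi(\y)$ for $\g \in \frac{1}{q}\Z^2$ tiles $\R^2$; consistency on the shared boundaries between translated cells holds for the same reason, since those boundaries lie in the orbit of $\partial T_0$ and thus carry $\psi = 0$.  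Continuity follows at once.

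Properties (i) and (iii) are then immediate: the three vertices of $\P_q$ in the closure of $T_0$, namely $\0$, $\frac{1}{q}(1,0)$, $\frac{1}{q}(0,1)$, lie on $\partial T_0$ and hence carry value~$0$, and all other elements of $\frac{1}{q}\Z^2$ are translates of these.  Property (ii) reduces via (iii) to the case $\g = \0$: we need $\psi(-\x) = -\psi(\x)$.  By (iii), $\psi(-\x) = \psi\bigl((1/q, 1/q) - \x\bigr) = \psi\bigl(\rho_{(1/q,1/q)}(\x)\bigr)$, which equals $-\psi(\x)$ by the reflection rule defining $\psi$ on $T_1$; the general $\g \ne \0$ case follows by composing with translations.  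For property (iv), both $\rho_\g$ and $\tau_\g$ with $\g \in \frac{1}{q}\Z^2 \subseteq \frac{1}{4q}\Z^2$ preserve the line arrangement $\mathcal{H}_{4q}$, and hence preserve the complex $\P_{4q}$; the resulting function is piecewise linear over $\P_{4q}$ on each fundamental cell and continuous across cells, hence globally piecewise linear over $\P_{4q}$.  The main obstacle is really just the well-definedness bookkeeping, but routing the construction through a single reflection plus translations sidesteps the full combinatorial action of $\Gamma_0$: every point of $\R^2$ has a canonical representative in $T_0 \cup T_1$, unique except on boundaries where $\psi$ vanishes, so no further orbit identification is needed.
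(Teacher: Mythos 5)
Your proof is correct and takes essentially the same approach as the paper: the paper's one-line proof (``the properties follow directly from the equivariance formula'') defers to the details that Appendix~\ref{s:equivariant} explicitly says it omits, namely how to extend a function from a fundamental domain using the semidirect decomposition of~$\Gamma_0$ into a reflection and a lattice of translations. Your argument fills in precisely those omitted details, using the decomposition from Lemma~\ref{lemma:semidirect} implicitly (one reflection $\rho_{(1/q,1/q)}$ plus $\tfrac1q\Z^2$-translations) and verifying well-definedness on the orbit of $\partial T_0$ where $\psi$ vanishes.
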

\begin{proof}
  The properties follow directly from the equivariance formula~\eqref{eq:equivariance}.
\end{proof}

%\begin{figure}[t]
%  \centering
% \inputfigeps{phi}
% \caption{Examples of equivariant perturbations.  The continuous piecewise
%   linear function $\psi$ is defined by connecting the points $(0,0),
%   (\tfrac{1}{4q},1), (\tfrac{1}{2q},0)$ (solid blue lines) and then extended
%   according to Lemma~\ref{lemma:phi} to a $\Gamma$-equivariant function
%   (dashed blue lines).  The light green curve shows the function
%   $\tilde\psi = \smash[t]{\tfrac{1}{2}} \sin(2 \pi q x )$, which is another $\Gamma$-equivariant function.}
% \label{figure:phi}
%  \end{figure} 

% \subsection{The polyhedral complex $\Delta\P_q$}

It is now convenient to introduce the function $\Delta\pi(\x,\y)= \pi(\x) + \pi(\y) - \pi(\x\oplus
\y)$, which measures the slack in the subadditivity constraints.
Let $\Delta\P_q$ be the polyhedral complex containing all polytopes $F=F(I,J,K)$ 
%%$$F = \{\,(\x,\y) \in \R^{2} \st \x \in I, \y \in J, \x\oplus \y \in K\,\}$$ 
where $I,J,K \in\P_q$. % see Figure~\ref{figure:polyhedralComplex}.
Observe that $\Delta\pi|_{F}$ is affine; if we introduce the function $\Delta\pi_F(\x,\y) =
\pi_I(\x) + \pi_J(\y) - \pi_K(\x\oplus \y)$ for all $\x,\y\in\R^2$, then 
$\Delta\pi(\x,\y) = \Delta\pi_F(\x,\y)$ for all $(\x,\y) \in F$.  Furthermore, if $(I,J,K)$ is a valid triple, then $(I,J,K) \in E(\pi, \P_q)$ if and only if $\Delta \pi|_{F(I,J,K)}  = 0$.  We will use $\verts(F)$
to denote the set of vertices of the polytope~$F$. 

\begin{lemma}
\label{lemma:vertices}
Let  $ F \in \Delta \P_q$ and let $(\x,\y)$ be a vertex of $F$.  Then $\x,\y$ are vertices of the  complex $\P_q$, i.e., $\x,\y\in \frac{1}{q}\Z^2$.
\end{lemma}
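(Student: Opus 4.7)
The plan is to realize $F$ as a polytope in $\R^4$ with a very restricted coefficient structure, and then invoke the unimodularity property of $\P_q$ to force the coordinates of any vertex of $F$ into $\tfrac1q\Z$.

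First I would fix the representation of $F$. Since $F \in \Delta\P_q$ is a single polytope (not a union), there is a particular integer translation vector $\ve z\in\Z^2$ for which $F = \{\,(\x,\y)\in\R^2\times\R^2 \st \x\in I,\ \y\in J,\ \x+\y\in K+\ve z\,\}$. Each of the three membership conditions contributes affine (in)equalities whose normal vectors lie in $D = \{(1,0),(0,1),(1,1)\}$ (up to sign), because every edge of $\P_q$ is horizontal, vertical, or anti-diagonal; moreover, all right-hand sides lie in $\tfrac1q\Z$, because the $0$-faces of $\P_q$ lie in $\tfrac1q\Z^2$ and $\ve z \in\Z^2$. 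Lifted to $\R^4$ in the variable $(\x,\y)$, the normals have one of the three forms $(\ve a,\ve0)$, $(\ve0,\ve a)$, or $(\ve a,\ve a)$ with $\ve a\in D$.

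A vertex $(\x,\y)$ of $F$ is the unique solution of four linearly independent tight constraints from this list. I would case-split on how many come from $I$, $J$, $K$, respectively; since each of $I,J,K$ is a subset of $\R^2$, at most two tight linearly independent constraints can arise from each, so the multiset of counts is one of $(2,2,0),(2,0,2),(0,2,2),(2,1,1),(1,2,1),(1,1,2)$. Whenever a group contributes two linearly independent constraints, the corresponding one of $\x,\y,\x+\y$ is forced to a $0$-face of the underlying cell and hence lies in $\tfrac1q\Z^2$. In the three cases $(2,2,0),(2,0,2),(0,2,2)$, two of the three vectors $\x,\y,\x+\y$ are directly placed in $\tfrac1q\Z^2$, and the third follows by subtraction.

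In the remaining cases, say $(2,1,1)$, one of the three vectors is pinned as above, and the other two tight constraints become a $2\times 2$ affine system in the second remaining vector whose coefficient rows are two distinct elements of $D$ and whose right-hand sides are in $\tfrac1q\Z$ (using that the already-pinned vector is in $\tfrac1q\Z^2$). The key arithmetic point is unimodularity: any two linearly independent vectors in $D$ span a $2\times 2$ matrix of determinant $\pm1$, so Cramer's rule places the solution in $\tfrac1q\Z^2$, and the remaining vector follows by addition or subtraction. The symmetric cases $(1,2,1)$ and $(1,1,2)$ are handled identically.

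The only real step of substance is the unimodularity observation, and it reduces to checking three $2\times2$ determinants once and for all; this is presumably the content of the referenced Lemma~\ref{lemma:I+J}. Everything else is a bookkeeping exercise in how many of the defining constraints of $F$ can be simultaneously tight at a vertex, which I expect to be the most tedious but not conceptually difficult part of the write-up.
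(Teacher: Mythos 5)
Your proof is correct, and it takes a genuinely different (though related) route from the paper's. The paper writes $F$ in the form $\{(\x,\y)\in\R^4 : \hat A(\x,\y)\le\ve b\}$ with $\ve b\in\tfrac1q\Z^9$ and a fixed $9\times 4$ (transposed $4\times 9$) constraint matrix $A$, verifies that $A$ is \emph{totally unimodular}, and then invokes the Hoffman--Kruskal theorem to conclude that $qF$ has integral vertices. You instead argue at the vertex directly: you observe that four linearly independent tight constraints at a vertex of $F\subseteq\R^4$ must split among $I$, $J$, $K$ with multiplicity pattern $(2,2,0)$, $(2,0,2)$, $(0,2,2)$, $(2,1,1)$, $(1,2,1)$, or $(1,1,2)$; whenever a group contributes two independent constraints the corresponding point is pinned to a $\tfrac1q\Z^2$-vertex, and in the mixed cases a $2\times 2$ system with determinant $\pm1$ (any two distinct directions from $\{(1,0),(0,1),(1,1)\}$) forces the remaining point into $\tfrac1q\Z^2$ by Cramer's rule. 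In effect you re-derive the Hoffman--Kruskal conclusion for this specific constraint structure, trading the one-shot TU check of a $4\times 9$ matrix for an explicit case analysis with only $2\times 2$ determinants. Both are sound; the paper's is shorter once TU is accepted, yours is more elementary and self-contained. One small thing worth making explicit in your write-up: in the $(2,1,1)$-type cases, the claim that the two residual constraint normals are \emph{distinct} elements of $D$ is not automatic from the problem data but follows from the assumed linear independence of the chosen four tight constraints in $\R^4$ (if the two residual normals were parallel, the four rows $(\ve a_1,\ve 0),(\ve a_2,\ve 0),(\ve 0,\ve a),(\ve a',\ve a')$ would be linearly dependent). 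You assert it but do not justify it; the justification is a one-line rank computation.
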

The proof again uses the strong unimodularity properties of~$\P_q$ and appears
in appendix~\ref{s:complexes-unimodularity}.

% \begin{remark}
% Given a valid triple $(I,J,K)$, it is easy to check if $(I,J,K) \in E(\pi, \P_q)$.  To do so, we must show that $\Delta \pi_{F(I,J,K)} = 0$ by showing $\Delta \pi(\u,\v) = 0$ for all $(\u,\v) \in \verts(F(I,J,K))$. We find the vertices by simply enumerating the possibilities: that is, check all possible combinations $(\u, \v)$ where $\u \in \verts(I)$ and $\v \in \verts(J)$ to see if $\u+\v \in K$.  If so, it is a vertex of $F(I,J,K)$.  There are fewer than 9 such possible vertices of $F(I,J,K)$.
% \end{remark}

\begin{lemma}\label{lemma:not-extreme1}
Let $\pi$ be a minimal, continuous piecewise linear function over $\P_q$ that is diagonally constrained. Suppose there exists $I^* \in \I_{q,2}\setminus ( \bar \S^2_{q,2} \cup \bar \S^1_{q,2})$.  Then $\pi$ is not extreme.
\end{lemma}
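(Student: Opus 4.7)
The plan is to exhibit $\pi$ as a nontrivial midpoint $\pi=\tfrac12(\pi+\epsilon\bar\pi)+\tfrac12(\pi-\epsilon\bar\pi)$, where the perturbation $\bar\pi$ is obtained by restricting the equivariant function $\psi$ of Lemma~\ref{lemma:our-equivariant-psi} to the union of triangles indexed by $\G_{I^*}$. The first step is to unpack the hypothesis. Since $\bar\S_{q,2}\subseteq\S^1_{q,2}$, we have $\bar\S^2_{q,2}\cup\bar\S^1_{q,2}=\S^2_{q,2}\cup\S^1_{q,2}$, so no triangle of $\G_{I^*}$ lies in $\I^1_{q,2}$ or $\I^2_{q,2}$. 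Combined with the diagonally-constrained hypothesis and Lemma~\ref{lemma:translate} (which forces any valid triple containing a point to have its other two faces of the same type), this will rule out every maximal tight valid triple $(I,J,K)\in E_{\max{}}(\pi,\P_q)$ touching $\G_{I^*}$ except those of the form $(T,T',\{\a\})$ up to permutation, with $\a\in\frac{1}{q}\Z^2$, $T\in\G_{I^*}$, and $T'=\bar\rho_\a(T)$ or $T'=\bar\tau_\a(T)$; the definition of $\E_0$ then forces $T'\in\G_{I^*}$ as well.

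Next I would define
\[
\bar\pi(\x)=\begin{cases}\psi(\x) & \text{if $\x$ lies in some $T\in\G_{I^*}$,}\\ 0 & \text{otherwise,}\end{cases}
\]
and verify its basic properties. Because $\psi$ vanishes at every vertex of $\P_q$ (Lemma~\ref{lemma:our-equivariant-psi}(i)) and on each diagonal, horizontal, and vertical edge of $\P_q$ (from its construction on $T_0$ combined with $\Gamma_0$-equivariance), $\bar\pi$ is well-defined, continuous, piecewise linear over $\P_{4q}$, and nontrivial. The antisymmetry $\bar\pi(\f-\x)=-\bar\pi(\x)$ will follow because the symmetry condition on $\pi$ forces $(I,\bar\rho_\f(I),\{\f\})\in E(\pi,\P_q)$ for every triangle~$I$, so $\{I,\bar\rho_\f(I)\}\in\E_0$ and $\G_{I^*}$ is closed under $\bar\rho_\f$; property~(ii) of Lemma~\ref{lemma:our-equivariant-psi} then gives the antisymmetry. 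Moreover $\bar\pi=0$ on $\frac{1}{q}\Z^2\supseteq\Z^2$.

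The crux is to show that $\bar\pi$ preserves every tight subadditivity relation of $\pi$: for each $(I,J,K)\in E_{\max{}}(\pi,\P_q)$ and each $(\x,\y)\in F(I,J,K)$, $\bar\pi(\x)+\bar\pi(\y)=\bar\pi(\x\oplus\y)$. If the triple contains no triangle of $\G_{I^*}$, every face is a point, a diagonal, or an exterior triangle; $\bar\pi$ vanishes on each and both sides are zero. Otherwise the triple is $(T,T',\{\a\})$ up to permutation with $T,T'\in\G_{I^*}$; if the point sits in position $K$, Lemma~\ref{lemma:translate}(1) gives $\y=\a\ominus\x$ and the identity reduces to $\psi(\x)+\psi(\a\ominus\x)=0=\psi(\a)$, which is Lemma~\ref{lemma:our-equivariant-psi}(i,ii); if the point sits in position $J$, Lemma~\ref{lemma:translate}(2) gives $\x\oplus\y=\x\oplus\a$ and the identity becomes $\psi(\x)=\psi(\x\oplus\a)$, which is Lemma~\ref{lemma:our-equivariant-psi}(iii). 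A standard compactness argument on the torus $\R^2/\Z^2$ then yields $\epsilon>0$ for which $\pi\pm\epsilon\bar\pi$ are two distinct valid functions with midpoint $\pi$, proving that $\pi$ is not extreme. The main obstacle is the case analysis just outlined; its cleanness rests on the diagonally-constrained hypothesis, which keeps horizontal and vertical edges out of maximal valid triples and thereby reduces the tight relations that $\bar\pi$ must respect to the single template (triangle, triangle, point) up to permutation.
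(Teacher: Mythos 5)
Your proposal is correct and follows essentially the same route as the paper: restrict the $\Gamma_0$-equivariant function $\psi$ to the triangles of $\G_{I^*}$, reduce via the diagonally-constrained hypothesis and Lemma~\ref{lemma:cases} to the (triangle, triangle, vertex) template, and use Lemma~\ref{lemma:translate} together with Lemma~\ref{lemma:our-equivariant-psi}(ii),(iii) to show $\bar\pi$ kills all tight relations of $\pi$. One small point worth tightening: the final ``standard compactness argument'' for producing $\epsilon>0$ does not work from continuity alone (the tight set is not open); the paper instead exploits that $\Delta\pi$ and $\Delta\bar\pi$ are piecewise linear over the \emph{finite} complex $\Delta\P_{4q}$ and takes $\epsilon$ to be a fraction of the minimum nonzero value of $\Delta\pi$ over the vertices of $\Delta\P_{4q}$, which is why Lemma~\ref{lemma:vertices} and the refinement to $\P_{4q}$ are invoked.
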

\begin{proof}
 Let $R = \bigcup_{J \in \G_{I^*}} \intr(J) \subseteq [0,1]^2$.   Since $R$ is a union of interiors, it does not contain any points in $\frac{1}{2q}\Z^2$.  
Let $\psi$ be the $\Gamma_0$-equivariant function of Lemma~\ref{lemma:our-equivariant-psi}.
Let 
$$
\epsilon = \min \{\,\Delta\pi_{\hat F}(\x,\y)\neq 0 \st {\hat F} \in \Delta \P_{4q}, \ (\x,\y) \in \verts(\hat F)\,\},
$$
and let $\bar \pi = \delta_R  \cdot\psi$ where $\delta_R$ is the indicator function for the set $R$. 
We will show that for 
$$\pi^1 = \pi + \tfrac{\epsilon}{3}\bar \pi, \  \ \pi^2 = \pi - \tfrac{\epsilon}{3}\bar \pi,$$
that $\pi^1, \pi^2$ are minimal, and therefore valid functions, and hence $\pi$ is not extreme.  We will show this just for $\pi^1$ as the proof for $\pi^2$ is the same.

Since $\psi(\0) =  0$ and $\psi(\f) = 0$, we see that $\pi^1(\0) =  0$ and $\pi^1(\f)= 1$.   

We want to show that $\pi^1$ is symmetric and subadditive. We will do this by analyzing the function $\Delta\pi^1(\x,\y) = \pi^1(\x) +  \pi^1(\y) - \pi^1(\x\oplus \y)$.   Since $\psi$ is piecewise linear over $\P_{4q}$, $\pi^1$ is also piecewise linear over $\P_{4q}$, and thus we only need to focus on vertices of $\Delta \P_{4q}$, which, by Lemma \ref{lemma:vertices}, are contained in $\frac1{4q} \Z^2$.

Let $\u,\v \in \frac1{4q} \Z^2$.  
%Let $\hat I, \hat J, \hat K \in \P_{4q}$ be a valid triple such that ${\hat F} = \{\,(\x,\y) \st \x \in \hat I, \y \in \hat J, \x\oplus \y \in \hat K\,\} \in \Delta \P_{4q}$ is non-empty. Let $\Delta\pi^1_{\hat F}(\u,\v) = \pi_{\hat I}(\u) +\pi_{\hat J}(\v) - \pi_{\hat K}(\u\oplus \v)$. Let $(\u,\v) \in \verts(\hat F)$.  
First, if $\Delta\pi_(\u,\v) > 0$, 
then %$\Delta\pi_{\hat F}(\u,\v) \geq \epsilon$ and  therefore 
$$\Delta\pi^1(\u,\v) \geq \pi(\u) - \epsilon/3 + \pi(\v) - \epsilon/3 - \pi(\u\oplus \v) - \epsilon/3 = \Delta\pi(\u,\v) - \epsilon \geq 0.$$ %
 
Next, we will show that if $\Delta\pi(\u,\v) = 0$, then $\Delta\pi^1(\u,\v) = 0$.  This will prove two things.  First,   $\Delta\pi^1(\x,\y) \geq 0$ for all $\x,\y \in [0,1]^2$, and therefore $\pi^1$ is subadditive.  Second, since $\pi$ is symmetric,   $\Delta\pi(\x,\f \ominus \x) = 0$ for all $\x \in \frac1{4q}\Z^2$,  which would imply that $\Delta\pi^1(\x,\f \ominus \x) = 0$ for all $\x \in \frac{1}{4q} \Z^2$, proving $\pi^1$ is symmetric via \autoref{lemma:subSym}. 

Suppose that $\Delta\pi(\u,\v) = 0$.  We will proceed by cases.  \\
\textbf{Case 1.}  Suppose $\u,\v, \u\oplus \v \notin R$.  Then $\delta_R(\u) = \delta_R(\v) = \delta_R(\u\oplus \v) = 0$, and $\Delta\pi^1(\u,\v) = \Delta\pi(\u,\v) \geq 0$.\\
%\textbf{Case 2.} Suppose $\u,\v \in \tfrac{1}{2q} \Z^2$.  Then $\u\oplus \v \in
%\tfrac{1}{2q}\Z^2$ and, by definition of $R$, $\u,\v, \u\oplus \v \notin R$, and we are actually in Case 1.\\ 
%
%Suppose $\u,\v \in \tfrac{1}{2q} \Z^2$.  Then $\u\oplus \v \in
%\tfrac{1}{2q}\Z^2$ and, by Lemma~\ref{lemma:our-equivariant-psi}~(i),
%$\psi(\u) = \psi(\v) = \psi(\u\oplus \v) = 0$.  Thus $\Delta\pi_{\hat F}^1(\u,\v) = \Delta\pi_{\hat F}(\u,\v) \geq 0$.\\
\noindent \textbf{Case 2.} Suppose we are not in Cases 1.  That is, suppose $\Delta\pi(\u,\v) = 0$, and at least one of $\u,\v,\u\oplus \v$ is in $R$.  Since $R \cap \frac{1}{2q}\Z^2 = \emptyset$, at least one of $\u,\v\u\oplus \v \notin \frac{1}{2q}\Z^2$.  This implies that at least one of $\u,\v \notin \frac{1}{2q}\Z^2$. 
Since $\Delta\pi^1(\x,\y)$ is symmetric in $\x$ and~$\y$, without loss of generality, we will assume that $\u \notin \tfrac{1}{2q} \Z^2$.  

Since $\u \notin \tfrac{1}{2q} \Z^2$, $(\u,\v) \notin \verts(\Delta\P_q)$.  Therefore, there exists a face $F \in \Delta\P_q$ such that $(\u,\v) \in \relint(F)$.  Since $\Delta \pi_F \geq 0$ ($\pi$ is subadditive) and $\Delta\pi_F(\u,\v) = 0$, it follows that $\Delta \pi_F = 0$.  Now let $(I,J,K) \in E_{\max{}}(\pi, \P_q)$ such that $F(I,J,K) \supseteq F$.
%  
%Since $\hat F \subseteq F$, and $\Delta\pi$ is affine in $\relint(F)$, we see
%$\Delta\pi_F(\x,\y) = \Delta\pi_{\hat{F}}(\x,\y)$.  Since $\pi$ is
%subadditive, $\Delta\pi_{F}(\x,\y) \geq 0$ for all $(x,y) \in \relint(F)$, and
%$\Delta\pi_{\hat F}(\u,\v) = 0$ with $(\u,\v)\in \relint({\hat F})$,  and
%therefore $\Delta\pi_{\hat F}\equiv 0$
%and so also $\Delta\pi_{F}\equiv 0$.
%
%Let $I, J ,K \in \P_q$ such that $(I,J,K)$ is the valid triple such that $F = F(I,J,K)$. Thus $\u\in I$,
%$\v\in J$, and $\u\oplus\v\in K$.  Since $\pi$ is diagonally constrained, by
%definition, $I,J,K\in \I_{q,0} \cup \Idiag \cup \I_{q,2}$.  Since $\Delta\pi_F = 0$, we have $(I,J,K) \in E(\pi, \P_q)$.
We discuss the possible cases for $I, J,K$ from Lemma \ref{lemma:cases}. 
\begin{enumerate}
\item If $I,J,K \notin \I_{q,2}$, then $I,J,K\in \I_{q,0} \cup \Idiag$ are all vertices or edges of $\P_q$, which are all not contained in $R$ since $R$ is the union of interiors of sets from $\I_{q,2}$.  Therefore, $\u, \v, \u\oplus \v \notin R$, which means we are in Case 1.  
\item If $I,J,K \in \I_{q,2}$, then $I,J,K \in \S^2_{q,2}$.  Therefore, $\u,\v,\u\oplus \v \notin R$, which means we are in Case 1.
\item One of $I,J,K$ is a diagonal edge in $\I_{q,1}$, while the other two are in $\I_{q,2}$, which means these sets are in $\S^1_{q,2}$.  Since edges are not in $R$, and $R \cap \S^1_{q,2} = \emptyset$, and again, $\u,\v,\u\oplus \v \notin R$, which means we are in Case 1.
\item This leaves us with the case where two of $I,J,K$ are in $\I_{q,2}$ and the third is a vertex, i.e.,  is in $\I_{q,0}$.  Since $\u \notin \frac1q\Z^2$, $I$ cannot be a vertex.  Therefore, $I \in \I_{q,2}$.  We proceed with this knowledge.
\end{enumerate}
%
%If $F$ were a 2-dimensional face, then $\u\in I$ for some $I \in \I^2_{G,2}$, which is a contradiction since $\u \in R$.  Therefore, $F \in \Delta\P_q$ is a 1-dimensional face and hence a subset of one of the three following hyperplanes: $\x = \u$, $\y = \v$, or $\x +\y = \u \oplus \v$.
%
%Since $\u \notin  \tfrac{1}{2q}\Z^2 \supset G$, $F$ cannot be a subset of $\x = \u$ because it is not a defining hyperplane of the polyhedral complex $\Delta \P_q$. 

%
%Observe that for any $\x \in [0,1]^2$ with $\x \notin G$, there is a unique $I_\x \in \I_{G,2}$ such that $\x \in I_\x$.  Since $\u \notin G$, there exists a unique interval $I_\u \in \I_{G,2}$ such that $\u \in I$.  
There are two possible cases.   \\
 \indent \textbf{Case 2a.}  $J \in \I_{q,0}$, $I, K \in \I_{q,2}$ and hence $\v \in \frac{1}{q}\Z^2$.\\
 %Since $\v \in G, \u \notin  \tfrac{1}{2q}\Z^2$, it follows that $\u\oplus \v \notin G$, and there is a unique interval $I_{\u\oplus \v} \in \I_{G,1}$ containing $\u\oplus \v$.   
%Since $\Delta\pi_{F} \equiv 0$, $I \subseteq D_{{\bar\tau}_\v}$ and
%${\bar\tau}_\v(I) =K$.  
Therefore $\{I, K\} \in
\E_0$ and $\delta_R(\u) = \delta_R(\u\oplus \v)$.  Since $\v \in \frac{1}{q}\Z^2$, we have
$\psi(\v) = 0$ and $\psi(\u) = \psi({\bar\tau}_\v(\u)) = \psi(\u\oplus \v)$ by
Lemma~\ref{lemma:our-equivariant-psi}~(iii).  It follows that $\bar \pi(\u) + \bar\pi(\v) - \bar\pi(\u\oplus \v) = 0$, and therefore $\Delta\pi^1(\u,\v) = \Delta\pi(\u,\v) = 0$.\\
\indent \textbf{Case 2b.}  $I, J \in \I_{q,2}$, $K \in \I_{q,0}$ and hence $\u \oplus \v \in \frac{1}{q}\Z^2$.
 %Since $\u\oplus \v \in G, \u \notin \tfrac{1}{2q} \Z^2$, it follows that $\v \notin G$, and there is a unique interval $I_{\v}\in \I_{G,1}$ containing $\v$.   
% Since $\Delta\pi \equiv 0$ in $\relint(F)$,  $I \subseteq D_{{\bar\rho}_\v}$ and ${\bar\rho}_\v(I) = J$.  
Therefore $\{I, J\} \in \E_0$ and $\delta_R(\u) = \delta_R(\v)$.  
 Since $\u\oplus \v \in \frac{1}{q}\Z^2$, $\psi(\u) = - \psi({\bar\rho}_{\u\oplus \v}(\u)) = - \psi(\v)$ by Lemma~\ref{lemma:our-equivariant-psi}~(ii).  It follows that $\bar \pi(\u) +  \bar\pi(\v) - \bar\pi(\u\oplus \v) = 0$, and therefore $\Delta\pi^1(\u,\v) = \Delta\pi(\u,\v) = 0$.\medskip

 We conclude that $\pi^1$ (and similarly $\pi^2$) is subadditive and symmetric, and therefore minimal and hence valid.  Therefore $\pi$ is not extreme.
\end{proof}

\subsection{Non-extremality by diagonal equivariant perturbation}
\label{sec:non-extreme-by-diag-perturbation}

We next construct a different equivariant perturbation function.  Let
$\Gamma_\smallsetminus = \langle \,\rho_\g, \tau_\g \st \ve1\cdot\g \equiv 0
\pmod{\frac{1}{q}}\,\rangle$, where $\ve 1=(1,1)$, be the group generated by reflections and
translations corresponding to all points on diagonal edges of $\P_q$.
We define the function $\varphi\colon \R^2 \to \R$ as a continuous piecewise linear function over $\P_{4q}$  in the following way: 
$$
\varphi(\x) = 
\begin{cases}
1 & \text{if } \ve1\cdot\x \equiv \frac{1}{4q} \pmod{\frac{1}{q}} ,\\
-1 & \text{if }\ve1\cdot\x \equiv \frac{3}{4q} \pmod{\frac{1}{q}},\\
0 &  \text{if }\ve1\cdot\x \equiv  0 \text{ or } \frac{2}{4q} \pmod{\frac{1}{q}}.
\end{cases}
$$
This function satisfies all properties of \autoref{lemma:our-equivariant-psi},
but is also $\Gamma_\smallsetminus$-equivariant.

%$$    \frac{1}{q}\Z^2
%\varphi_1(\x) = 
%\begin{cases}
%1 & \text{if } \x_1 \in \tfrac{1}{q} \Z,   \x_2 \bmod \frac{1}{q} = 1,\\
%-1 & \text{if } \x_1 \in \tfrac{1}{q} \Z, \x_2 \bmod \frac{1}{q} = 3,\\
%0 & \text{otherwise},
%\end{cases}
%$$
%$$
%\varphi_2(\x) = 
%\begin{cases}
%1 & \text{if }  \x_1 \bmod \frac{1}{q} = 1, \x_2 \in \tfrac{1}{q} \Z,  \\
%-1 & \text{if } \x_1 \bmod \frac{1}{q} = 3, \x_2 \in \tfrac{1}{q} \Z, \\
%0 & \text{otherwise},
%\end{cases}
%$$

%We will splice them together in a continuous fashion that creates a the proper perturbation function.

\begin{lemma}\label{lemma:not-extreme2}
Suppose there exists $I^* \in \bar \S^1_{q,2}$ and $\pi$ is diagonally constrained.  Then $\pi$ is not extreme.
\end{lemma}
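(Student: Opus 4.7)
The plan is to carry out the strategy of Lemma~\ref{lemma:not-extreme1} with the $\Gamma_\smallsetminus$-equivariant $\varphi$ playing the role of $\psi$. Set $R = \bigcup_{J \in \G_{I^*}} \intr(J)$, $\bar\pi = \delta_R \cdot \varphi$, and choose
\[
\epsilon = \min \{\,\Delta\pi_{\hat F}(\x,\y)\neq 0 \st \hat F \in \Delta \P_{4q},\ (\x,\y) \in \verts(\hat F)\,\};
\]
the candidate perturbations are $\pi^{1} = \pi + \tfrac{\epsilon}{3}\bar\pi$ and $\pi^{2} = \pi - \tfrac{\epsilon}{3}\bar\pi$. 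Since $\varphi(\0)=\varphi(\f)=0$, we immediately obtain $\pi^{i}(\0)=0$ and $\pi^{i}(\f)=1$; moreover $\bar\pi$ is not identically zero on $\intr(I^*)$, so $\pi^{i}\neq\pi$. By Lemma~\ref{lemma:vertices}, establishing minimality reduces to checking $\Delta\pi^{i}(\u,\v)\geq 0$ at all vertices $\u,\v\in\tfrac{1}{4q}\Z^2$, with equality whenever $\Delta\pi(\u,\v)=0$. The strictly-positive case is handled by the choice of $\epsilon$ exactly as in Lemma~\ref{lemma:not-extreme1}.

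The case $\Delta\pi(\u,\v)=0$ requires the additivity identity $\bar\pi(\u)+\bar\pi(\v)-\bar\pi(\u\oplus\v)=0$. I would pick $(I,J,K)\in E_{\max{}}(\pi,\P_q)$ with $F(I,J,K)\ni(\u,\v)$ and exhaust the four combinatorial sub-cases admitted by the diagonally constrained hypothesis $I,J,K\in\I_{q,0}\cup\Idiag\cup\I_{q,2}$ (as in Lemma~\ref{lemma:cases}). If $I,J,K\notin\I_{q,2}$, all three of $\u,\v,\u\oplus\v$ lie on lower-dimensional faces of $\P_q$ and $\delta_R$ vanishes. If $I,J,K\in\I_{q,2}$, then $I,J,K\in\S^2_{q,2}$; in the context where the lemma is applied in the main algorithm one also has $I^*\notin\bar\S^2_{q,2}\supseteq\S^2_{q,2}$, so $\G_{I^*}$ is disjoint from $\S^2_{q,2}$ and $\delta_R$ again vanishes at all three points. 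In the two remaining sub-cases exactly one of $I,J,K$ is a diagonal edge (resp.\ a vertex) and the other two are triangles; unpacking the definition of $\E_\diag$ (resp.\ $\E_0$), the two triangles lie in a common component of $\G$, so $\delta_R$ takes the same value on them. The $\varphi$-cancellation is then supplied by the $\Gamma_\smallsetminus$-invariances $\varphi(\x\oplus\g)=\varphi(\x)$ and $\varphi(\g\ominus\x)=-\varphi(\x)$ for $\ve 1\cdot\g\equiv 0\pmod{\tfrac{1}{q}}$, together with the elementary observation that $f(t)+f(-t)=0$ for all $t\in\tfrac{1}{4q}\Z/\tfrac{1}{q}\Z$, where $\varphi(\x)=f(\ve 1\cdot\x)$.

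The main obstacle---and the reason this is not a purely cosmetic variation of Lemma~\ref{lemma:not-extreme1}---is continuity of $\bar\pi$. The previous lemma relied on $\psi$ vanishing on every edge of $\P_q$, so that $\delta_R\cdot\psi$ is automatically continuous across $\partial R$; by contrast $\varphi$ vanishes only on diagonal edges of $\P_q$, so a naive $\delta_R\cdot\varphi$ would jump across any horizontal or vertical boundary edge of $R$, and Theorem~\ref{Theorem:functionContinuous} would then preclude $\pi^{1}$ from being a valid function. This is exactly what the hypothesis $I^*\in\bar\S^1_{q,2}=\S^1_{q,2}\setminus\bar\S_{q,2}$ is designed to allow us to repair: it forbids any triangle of $\G_{I^*}$ from sharing a horizontal or vertical edge with a triangle of $\S^2_{q,2}$, leaving room to enlarge $R$ by successively absorbing the horizontal and vertical neighbours of $\G_{I^*}$ (all outside $\S^2_{q,2}$, so affine-imposing is not asserted there) until the topological boundary of the enlarged region consists exclusively of diagonal edges, on which $\varphi=0$. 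Re-establishing continuity of $\bar\pi$ in this fashion while preserving the additivity identity on the enlarged region is what I expect to be the technical heart of the proof.
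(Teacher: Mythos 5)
Your proposal diverges from the paper's proof at a decisive point: the definition of $\bar\pi$. You set $\bar\pi = \delta_R \cdot \varphi$ as a \emph{pointwise product} with $R = \bigcup_{J\in\G_{I^*}}\intr(J)$, and you are right that this function would be discontinuous across horizontal and vertical boundary edges of $R$. But the paper never incurs this problem. It defines $\bar\pi$ as \emph{the unique continuous piecewise linear function over $\P_{4q}$ whose values at the vertices of $\P_{4q}$ are} $\delta_R(\x)\cdot\varphi(\x)$, with $R = \bigl(\bigcup_{J\in\G_{I^*}} J\bigr)\setminus\{\x \st \ve1\cdot\x\equiv 0 \text{ or } \tfrac{2}{4q} \pmod{\tfrac1q}\}$ (closed triangles, minus the fine diagonal grid lines). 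Such a vertex-prescribed piecewise linear function is \emph{automatically} continuous, and the subsequent case analysis of $\Delta\pi^1$ only ever evaluates $\bar\pi$ at points of $\tfrac{1}{4q}\Z^2$, so only the vertex values matter. The spurious discontinuity you worried about simply never arises.

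Your proposed repair---iteratively absorbing the horizontal and vertical $\P_q$-neighbours of $\G_{I^*}$ until the boundary becomes purely diagonal---is not the paper's route and is not justified as stated. The hypothesis $I^*\in\bar\S^1_{q,2}$ rules out a triangle of $\G_{I^*}$ sharing a horizontal or vertical edge with a triangle of $\S^2_{q,2}$, so the \emph{first} layer of absorbed neighbours avoids $\S^2_{q,2}$; but nothing prevents the \emph{second} or later layers from hitting $\S^2_{q,2}$ or $\bar\S_{q,2}$ (both of which are affine imposing), at which point the perturbation would contradict affine-imposingness. Moreover, every newly absorbed triangle participates in fresh additivity constraints that you do not verify, and you give no argument that the process terminates with an all-diagonal boundary. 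Finally, even the correct-sign claim ``$\u,\v\in\tfrac1{2q}\Z^2\Rightarrow$ all three points escape $R$'' requires the diagonal grid lines $\ve1\cdot\x\equiv 0,\tfrac{2}{4q}$ to be \emph{explicitly excised} from $R$, which your $R$ does not do (interiors of $\P_q$-triangles still contain points with $\ve1\cdot\x\equiv \tfrac{2}{4q}$). So the gap here is not cosmetic: the paper's construction sidesteps the continuity obstacle by interpolating, whereas your enlargement scheme is an untested alternative that, as described, can fail.
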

\begin{proof}
 Let $R = (\bigcup_{J \in \G_{I^*}} J) \setminus \{\, \x \st \ve1\cdot\x \equiv 0
 \text{ or $\frac2{4q}$} \pmod{\tfrac{1}{q}}\, \}$. \\

Let 
$$
\epsilon = \min \{\,\Delta\pi_{ F}(\x,\y)\neq 0 \st { F} \in \Delta \P_{4q}, \ (\x,\y) \in \verts( F)\,\},
$$
and let $\bar \pi$ be the unique continuous piecewise linear function over $\P_{4q}$ such that for any vertex $\x$ of $\P_{4q}$, we have $\bar\pi(\x) = \delta_{R}(\x)  \cdot\varphi(\x)$ where $\delta_R$ is the indicator function for the set $R$.  By construction, $\bar \pi$ is a continuous function that vanishes on all diagonal hyperplanes in the complex $\P_q$.
We will show that for 
$$\pi^1 = \pi + \tfrac{\epsilon}{3}\bar \pi, \  \ \pi^2 = \pi - \tfrac{\epsilon}{3}\bar \pi,$$
that $\pi^1, \pi^2$ are minimal, and therefore valid functions, and hence $\pi$ is not extreme.  We will show this just for $\pi^1$ as the proof for $\pi^2$ is the same.

Since, $\varphi(\0) =  0$ and $\varphi(\f) = 0$, we see that $\pi^1(\0) =  0$ and $\pi^1(\f)= 1$.   

We want to show that $\pi^1$ is symmetric and subadditive. We will do this by analyzing the function $\Delta\pi^1(\x,\y) = \pi^1(\x) +  \pi^1(\y) - \pi^1(\x\oplus \y)$.   Since $\bar \pi$ is continuous piecewise linear over $\P_{4q}$, $\pi^1$ is also continuous piecewise linear over $\P_{4q}$, and thus we only need to focus on vertices of $\Delta \P_{4q}$, which, by Lemma \ref{lemma:vertices}, are contained in $\frac1{4q} \Z^2$.

Let $\u,\v \in \frac1{4q} \Z^2$.   

%Because of the construction, $\delta_{R_1} \varphi_1$ and $\delta_{R_2}\varphi_2$ vanish at all vertices of $\Delta \P_{4q}$.  Their only role here is to make the function $\bar \pi$ continuous, and since we now focus on vertices of $\P_{4q}$, we do not need to use these functions for the rest of the proof.

%Let $\hat I, \hat J, \hat K \in \P_{4q}$, such that ${\hat F} = \{\,(\x,\y) \st \x \in \hat I, \y \in \hat J, \x\oplus \y \in \hat K\,\} \in \Delta \P_{4q}$ is non-empty. Let $\Delta\pi^1_{\hat F}(\u,\v) = \pi_{\hat I}(\u) +\pi_{\hat J}(\v) - \pi_{\hat K}(\u\oplus \v)$. Let $(\u,\v) \in \verts(\hat F)$.  

First, if $\Delta\pi_(\u,\v) > 0$, then $\Delta\pi_(\u,\v) \geq \epsilon$ and 
 therefore $$\Delta\pi^1(\u,\v) \geq \pi(\u) - \epsilon/3 + \pi(\v) - \epsilon/3 - \pi(\u\oplus \v) - \epsilon/3 = \Delta\pi(\u,\v) - \epsilon \geq 0.$$ %
 
Next, we will show that if $\Delta\pi(\u,\v) = 0$, then $\Delta\pi^1(\u,\v) = 0$. 
This will prove two things.  First,   $\Delta\pi^1(\x,\y) \geq 0$ for all $\x,\y \in [0,1]^2$, and therefore $\pi^1$ is subadditive.  Second, since $\pi$ is symmetric,   $\Delta\pi(\x,\f \ominus \x) = 0$ for all $\x \in \frac1{4q}\Z^2$,  which would imply that $\Delta\pi^1(\x,\f \ominus \x) = 0$ for all $\x \in \frac{1}{4q} \Z^2$, proving $\pi^1$ is symmetric via \autoref{lemma:subSym}. 

Suppose that $\Delta\pi(\u,\v) = 0$.  We will proceed by cases.  \\
\textbf{Case 1.}  Suppose $\u,\v, \u\oplus \v \notin R$.  Then $\delta_R(\u) = \delta_R(\v) = \delta_R(\u\oplus \v) = 0$, and $\Delta\pi^1(\u,\v) = \Delta\pi(\u,\v) \geq 0$.\\
\textbf{Case 2.} Suppose $\u,\v \in \tfrac{1}{2q} \Z^2$.  Then $\ve 1 \cdot(\u\oplus \v) \equiv 0 \pmod{\tfrac1q}$ and, by definition of $R$, $\u,\v, \u\oplus \v \notin R$, and we are actually in Case 1.\\
%by Lemma~\ref{lemma:our-equivariant-psi}~(i),
%$\psi(\u) = \psi(\v) = \psi(\u\oplus \v) = 0$.  Thus $\Delta\pi_{\hat F}^1(\u,\v) = \Delta\pi_{\hat F}(\u,\v) \geq 0$.\\
\noindent \textbf{Case 3.} Suppose we are not in Cases 1 or 2.  That is, suppose $\Delta\pi(\u,\v) = 0$, not both $\u,\v$ are in $\smash[b]{\tfrac{1}{2q} \Z^2}$, and at least one of $\u,\v,\u\oplus \v$ is in $R$.
Since $\Delta\pi^1(\x,\y)$ is symmetric in $\x$ and~$\y$, without loss of generality, since not both $\u,\v$ are in $\tfrac{1}{2q} \Z^2$, we will assume that $\u \notin \tfrac{1}{2q} \Z^2$.  

Since $\u \notin \tfrac{1}{2q} \Z^2$, $(\u,\v) \notin \verts(\Delta\P_q)$.  Therefore, there exists a face $F \in \Delta\P_q$ such that $(\u,\v) \in \relint(F)$.  Since $\Delta \pi_F \geq 0$ ($\pi$ is subadditive) and $\Delta\pi_F(\u,\v) = 0$, it follows that $\Delta \pi_F = 0$.  Now let $(I,J,K) \in E_{\max{}}(\pi, \P_q)$ such that $F(I,J,K) \supseteq F$.
%
%Since $\u \notin \tfrac{1}{2q} \Z^2$, $(\u,\v) \notin \verts(\Delta\P_q)$.  Therefore, there exists a face $F \in \Delta\P_q$ such that $(\u,\v) \in \relint(F)$.  
%Since $\hat F \subseteq F$, and $\Delta\pi$ is affine in $\relint(F)$, we see $\Delta\pi_F(\x,\y) = \Delta\pi_{\hat{F}}(\x,\y)$.  Since $\pi$ is subadditive, $\Delta\pi_{F}(\x,\y) \geq 0$ for all $(\x,\y) \in \relint(F)$, and $\Delta\pi_{\hat F}(\u,\v) = 0$ with $(\u,\v)\in \relint({\hat F})$,  and therefore $\Delta\pi_{\hat F}\equiv 0$. 
%
%Let $I, J ,K \in \P_q$ be the smallest (by set inclusion) faces such that $F = \{\,(\x,\y) \st \x \in I,\, \y \in J,\, \x \oplus \y \in K\,\}$. 
Since $\pi$ is diagonally constrained, by definition, $I,J,K$ are each either a vertex, diagonal edge, or triangle in $\P_q$.
We discuss the possible cases for $I, J,K$ according to Lemma \ref{lemma:cases}.  
\begin{enumerate}
\item If $I,J,K \notin \I_{q,2}$, then $I,J,K$ are all vertices or diagonal edges of $\P_q$, which are all not contained in $R$ since all vertices and diagonal edges are subsets of $ \{\,\x \st \x_1 + \x_2  \equiv 0 \pmod{\tfrac{1}{q}}\,\}$.  Therefore, $\u, \v, \u\oplus \v \notin R$, which means we are in Case 1.  
\item If $I,J,K \in \I_{q,2}$, then $I,J,K \in \S^2_{q,2}$.  By definition of $\bar \S^1_{q,2}$, for any $I' \in \S^2_{q,2}$ and $J' \in \bar \S^1_{q,2}$, either $I'\cap J' = \emptyset$, or $I' \cap J' \in \Idiag$.  Therefore, $\u,\v,\u\oplus \v \notin R$, which means we are in Case 1.  
\item If two of $I,J,K$ are in $\I_{q,2}$ and
  the third is a vertex, i.e.,  is in $\I_{q,0}$.  Since $\u \notin
  \frac1q\Z^2$, $I$ cannot be a vertex.  Therefore, $I \in \I_{q,2}$.  For
  this case, the proof is exactly the same as Case 2a and Case 2b in the proof
  of Lemma \ref{lemma:not-extreme1} because $\bar\pi(\x) = 0$ for all
  vertices~$\x\in\I_{q,0}$.  For brevity, we will not repeat it here.
\item If one of $I,J,K$ is in $\Idiag$, call it $I'$, 
 and the other two are in $\I_{q,2}$, call them $J'$,$K'$, then $J',K' \in \S^1_{q,2}$ and $\{J', K'\} \in \E_\diag$.  Since $I' \in \Idiag$, $I' \cap R = \emptyset$.
  Recall that $\S_{q,2}^1 \subseteq \bar\S_{q,2}^1 \cup \bar\S_{q,2}^2$.  If either $J'$ or $K'$ is in $\bar\S_{q,2}^2$, then they both are in $\bar\S_{q,2}^2$, i.e., $J' \cup K' \cap R = \emptyset$ and therefore $\u,\v,\u\oplus\v\notin R$, which is
  Case~1.  We proceed to consider the case where
$I' \in \Idiag$ and   $J',K'\in\bar\S_{q,2}^1$ with $\{J', K'\} \in \E_\diag$ of which there are three possible cases.
\end{enumerate}
%
%If $F$ were a 2-dimensional face, then $\u\in I$ for some $I \in \I^2_{G,2}$, which is a contradiction since $\u \in R$.  Therefore, $F \in \Delta\P_q$ is a 1-dimensional face and hence a subset of one of the three following hyperplanes: $\x = \u$, $\y = \v$, or $\x +\y = \u \oplus \v$.
%
%Since $\u \notin  \tfrac{1}{2q}\Z^2 \supset G$, $F$ cannot be a subset of $\x = \u$ because it is not a defining hyperplane of the polyhedral complex $\Delta \P_q$. 

%
%Observe that for any $\x \in [0,1]^2$ with $\x \notin G$, there is a unique $I_\x \in \I_{G,2}$ such that $\x \in I_\x$.  Since $\u \notin G$, there exists a unique interval $I_\u \in \I_{G,2}$ such that $\u \in I$.  
%There are three possible cases.
 \indent \textbf{Case 3a.}   $ I\in \Idiag$, $J, K \in \I_{q,2}$.
% Since $\Delta\pi \equiv 0$ in $\relint(F)$, it follows that that $\{J,K\} \in \E_\diag$, by definition of $\E_\diag$.  
Since $\{J,K\} \in \E_\diag$, $\delta_R(\v) = \delta_R(\u \oplus \v)$.  
Since $I\in\Idiag$ and $\u \in I$, $\ve 1\cdot \u \equiv 0 \pmod{\tfrac{1}{q}}$.  It follows that $\varphi(\u) = 0$ and  
$\ve1 \cdot \v \equiv \ve1 \cdot (\u \oplus \v) \pmod{\tfrac{1}{q}}$.  Therefore, $\varphi(\v) = \varphi(\u\oplus \v)$. Combining these, we have $\bar \pi(\u) + \bar\pi(\v) - \bar\pi(\u\oplus \v) = 0$, and therefore $\Delta\pi^1(\u,\v) = \Delta\pi(\u,\v) = 0$.\\
  \indent \textbf{Case 3b.}   $ J\in \Idiag$, $I, K \in \I_{q,2}$.
This is similar to Case 3a and the proof need not be repeated.\\
\indent \textbf{Case 3c.}  $I, J \in \I_{q,2}$, $K \in \Idiag$ and hence
$\ve1\cdot(\u \oplus \v) \equiv 0 \pmod{\frac1q}$.
 %Since $\u\oplus \v \in G, \u \notin \tfrac{1}{2q} \Z^2$, it follows that $\v \notin G$, and there is a unique interval $I_{\v}\in \I_{G,1}$ containing $\v$.   
% Since $\Delta\pi \equiv 0$ in $\relint(F)$,  it follows that $(I,J) \in
% \E_\smallsetminus$, by definition of $\E_\smallsetminus$.     
Since $\{I,J\} \in \E_\diag$, we have $\delta_R(\u) = \delta_R(\v)$.   
 Since $\ve1\cdot(\u\oplus \v) \equiv 0 \pmod{\frac1q}$, we have
 $\ve1\cdot\u\equiv -\ve1\cdot\v \pmod{\frac1q}$, and hence $\varphi(\u) = -
 \varphi(\v)$.  It follows that $\bar \pi(\u) +  \bar\pi(\v) -
 \bar\pi(\u\oplus \v) = 0$, and therefore $\Delta\pi^1(\u,\v) =
 \Delta\pi(\u,\v) = 0$.\medskip

 We conclude that $\pi^1$ (and similarly $\pi^2$) is subadditive and symmetric, and therefore minimal and hence valid.  Therefore $\pi$ is not extreme.
\end{proof}

\begin{proof}[of Lemma~\ref{lemma:not-extreme}]
This follows directly from Lemmas \ref{lemma:not-extreme1} and \ref{lemma:not-extreme2}.
\end{proof}

The specific form of our perturbations as continuous piecewise linear
function over $\P_{4q}$ implies the following corollary.

\begin{corollary}
\label{corollary:AIG4}
Suppose $\pi$ is a continuous piecewise linear function over $\P_q$ and is diagonally constrained.  If $\pi$ is not affine imposing over $\I_{q,2}$, then there exist distinct minimal
$\pi^1, \pi^2$ that are continuous piecewise linear over $\P_{4q}$ such that $\pi = \tfrac{1}{2}\pi^1+ \tfrac{1}{2} \pi^2$.  
\end{corollary}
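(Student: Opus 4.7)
The plan is to observe that Lemmas~\ref{lemma:not-extreme1} and~\ref{lemma:not-extreme2} already construct the required decomposition; what remains is to record that the $\pi^1,\pi^2$ produced there are continuous piecewise linear over~$\P_{4q}$ and are distinct from one another.

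First I would invoke the contrapositive of Theorem~\ref{theorem:AI}: since $\pi$ is not affine imposing over~$\I_{q,2}$, we must have $\bar\S^2_{q,2}\neq\I_{q,2}$, so some $I^*\in \I_{q,2}\setminus\bar\S^2_{q,2}$ exists. If $I^*\in\bar\S^1_{q,2}$ I would apply the construction of Lemma~\ref{lemma:not-extreme2}; otherwise $I^*\in\I_{q,2}\setminus(\bar\S^2_{q,2}\cup\bar\S^1_{q,2})$ and I would apply Lemma~\ref{lemma:not-extreme1}. In either case, the proof of the lemma explicitly produces $\pi^1 = \pi+\tfrac{\epsilon}{3}\bar\pi$ and $\pi^2=\pi-\tfrac{\epsilon}{3}\bar\pi$ that are minimal valid functions with $\pi=\tfrac12\pi^1+\tfrac12\pi^2$. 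Since $\bar\pi$ is built from the $\P_{4q}$-piecewise-linear functions $\psi$ or $\varphi$, it is itself continuous piecewise linear over~$\P_{4q}$; because $\P_{4q}$ refines $\P_q$, the original $\pi$ is also piecewise linear over $\P_{4q}$, and hence so are $\pi^1$ and~$\pi^2$.

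The only step that requires any real care is distinctness, i.e., showing $\bar\pi\not\equiv 0$. Because $R$ contains the interior of~$I^*$, it suffices that $\psi$ (respectively $\varphi$) is not identically zero on the interior vertices of $\P_{4q}$ inside $I^*$; this follows from Lemma~\ref{lemma:our-equivariant-psi} (respectively the analogous properties of~$\varphi$), which propagate the prescribed nonzero interior values from~$T_0$ to every triangle of~$\P_q$ via the equivariance construction. This last propagation is the main, though still minor, obstacle; the rest of the argument is essentially a restatement of the preceding lemmas.
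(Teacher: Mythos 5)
Your proposal is correct and follows exactly the route the paper intends: the paper treats the corollary as a direct observation that the perturbations $\bar\pi$ built in Lemmas~\ref{lemma:not-extreme1} and~\ref{lemma:not-extreme2} are continuous piecewise linear over $\P_{4q}$, and you unpack this by invoking the contrapositive of Theorem~\ref{theorem:AI} to get $\bar\S^2_{q,2}\neq\I_{q,2}$, splitting on whether $I^*\in\bar\S^1_{q,2}$, and then noting distinctness because $\bar\pi\not\equiv 0$ on $R\supseteq\intr(I^*)$.

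One small bookkeeping note: the nonvanishing of $\psi$ (resp.\ $\varphi$) on the $\P_{4q}$-interior vertices of an arbitrary triangle is best traced back to the equivariance formula~\eqref{eq:equivariance} directly rather than to the enumerated items of Lemma~\ref{lemma:our-equivariant-psi}, which do not explicitly assert nonzero values; but this is a matter of citation, not of substance.
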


\subsection{Extremality and non-extremality by linear algebra}
\label{section:system}

In this section we suppose $\pi$ is a minimal  continuous piecewise linear function over $\P_{q}$ that is affine imposing in  $\I_{q,2}$.  Therefore, $\pi^1$ and $\pi^2$ must also be continuous piecewise linear functions over $\P_{q}$.   It is clear that whenever $\pi(\x) + \pi(\y) = \pi(\x\oplus \y)$, the functions $\pi^1$ and $\pi^2$ must also satisfy this equality relation, that is, $\pi^i(\x) + \pi^i(\y) = \pi^i(\x\oplus \y)$.  
%Let $\Delta\pi^i(x,y) = \pi^i(x) + \pi^i(y) - \pi^i(x\oplus y)$ for $i=1,2$.  Equivalently, whenever $\Delta\pi(x,y) = 0$, we also have $\Delta\pi^i(x,y) = 0$ for $i=1,2$.  This is easy to see since $\pi, \pi_1, \pi_2$ are subadditive, $\Delta\pi, \Delta\pi^1, \Delta\pi^2 \geq 0$ and $\Delta\pi = \tfrac{1}{2}\Delta\pi^1 + \tfrac{1}{2} \Delta\pi^2$.  We extend this idea slightly.  
%
%\begin{lemma}~\label{lemma:tight-implies-tight}
%Let $F \in \P_\B$ and let $(u,v) \in \verts(F)$.  If $\Delta\pi_F(u,v)=0$ then $\Delta\pi_F^i(u,v) = 0$ for $i=1,2$.
%\end{lemma}
%\begin{proof}
%From definition of $\Delta\pi^1, \Delta\pi^2$, we see that $\Delta\pi = \tfrac{1}{2} \Delta\pi^1 + \tfrac{1}{2} \Delta\pi^2$.  Since $\pi^1, \pi^2$ are minimal, $\Delta\pi^i \geq 0$ for $i=1,2$. If $\Delta\pi_F(u,v) = 0$, then 
%$$
%0 = \Delta\pi_F(u,v)= \lim_{\substack{(x,y) \to (u,v)\\ (x,y) \in \relint(F)}} \Delta\pi(x,y) 
%=\lim_{\substack{(x,y) \to (u,v)\\ (x,y) \in \relint(F)}} \tfrac{1}{2} \Delta\pi^1(x,y) +\ \tfrac{1}{2} \Delta\pi^2(x,y).
%$$
%Since the right hand side limit is zero and $\Delta\pi^1, \Delta\pi^2 \geq 0$, we must have that 
%$$
%0 = \lim_{\substack{(x,y) \to (u,v)\\ (x,y) \in \relint(F)}} \Delta\pi^1(x,y) 
%=\lim_{\substack{(x,y) \to (u,v)\\ (x,y) \in \relint(F)}}  \Delta\pi^2(x,y),
%$$
%and therefore $\Delta\pi_F^i(u,v) = 0$ for $i=1,2$.
%\end{proof}
We now set up a system of linear equations that $\pi$ satisfies and that
$\pi_1$ and $\pi_2$ must also satisfy.
Let $\varphi\colon \frac{1}{q}\Z^2 \to \R$.  
%For every $F\in \P_\B$, let $\Delta\varphi_F(x,y) = \varphi_I(x) +\varphi_J(y) - \varphi_K(x\oplus y)$, where $F = \{\,(x,y) \st x \in I, y \in J, x\oplus y \in K\,\}$ and $I,J,K \in \I_{\B,1} \cup \I_{\B,0}$.  
 Suppose $\varphi$ satisfies the following system of linear equations:
\begin{equation}
\label{equation:system}
\begin{cases}
\varphi(\0) = 0,\ \varphi(\f) = 1,\ \varphi(\ColVec{0}{1}) = 0,\
\varphi(\ColVec{0}{1}) = 0,\  \varphi(\ColVec{1}{1})) = 0,\\
\varphi(\u) + \varphi(\v)  =  \varphi(\u \oplus \v) 
\text{ if $\u,\v \in \frac{1}{q} \Z^2$, } 
\pi(\u) + \pi(\v)  =  \pi(\u \oplus \v)
\end{cases}
\end{equation}

 Since $\pi$ exists and satisfies~\eqref{equation:system}, we know that the system has a solution.
% We will show that this solution is unique if and only if $\pi$ is extreme.
\begin{theorem}
\label{theorem:systemNotUnique}
Let $\pi\colon \R^2 \to \R$ be a continuous piecewise linear valid function
over $\P_q$.
\begin{enumerate}[\rm i.]
\item If the system \eqref{equation:system} does not have a
  unique solution, then $\pi$ is not extreme. 
\item Suppose $\pi$ is minimal and affine imposing in $\I_{q,2}$.
  Then $\pi$ is extreme if and only if the system of equations
  \eqref{equation:system} has a unique solution.  
\end{enumerate}
\end{theorem}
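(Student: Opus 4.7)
Both parts rest on the same mechanism: translating between solutions of~\eqref{equation:system} and continuous piecewise linear decompositions $\pi=\tfrac12(\pi^1+\pi^2)$. I would prove part~(i) by converting any nonzero kernel vector of~\eqref{equation:system} into an explicit perturbation witnessing non-extremality. The forward implication of part~(ii) is then the contrapositive of~(i); its converse uses the affine imposing hypothesis to force every valid decomposition of~$\pi$ to arise from a solution pair of~\eqref{equation:system}.

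For part~(i), suppose $\varphi_1\neq\varphi_2$ both solve~\eqref{equation:system}, and set $\bar\varphi=\varphi_1-\varphi_2\not\equiv 0$. Let $\bar\pi$ denote the unique $\Z^2$-periodic continuous piecewise linear function over~$\P_q$ extending $\bar\varphi$. The boundary equations give $\bar\pi\equiv 0$ on $\Z^2$, while the additivity equations give $\bar\pi(\u)+\bar\pi(\v)=\bar\pi(\u\oplus\v)$ at every $(\u,\v)\in(\tfrac1q\Z^2)^2$ at which the same equality holds for~$\pi$; in particular, $\bar\pi(\u)+\bar\pi(\f\ominus\u)=0$ for $\u\in\tfrac1q\Z^2$. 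Put $\pi^{\pm}=\pi\pm\epsilon\bar\pi$ for $\epsilon>0$ to be chosen small. The boundary values and symmetry of $\pi^{\pm}$ are immediate. For subadditivity, $\Delta\pi^{\pm}$ is affine on each face~$F$ of $\Delta\P_q$, so by~\autoref{lemma:vertices} non-negativity reduces to checking $\verts(F)\subseteq(\tfrac1q\Z^2)^2$. At a vertex $(\u,\v)$ with $\Delta\pi(\u,\v)>0$, a sufficiently small $\epsilon$ preserves the inequality; at a vertex with $\Delta\pi(\u,\v)=0$, the system constraint forces $\Delta\bar\pi(\u,\v)=0$, so $\Delta\pi^{\pm}(\u,\v)=0$. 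Non-negativity of $\pi^{\pm}$ itself follows for small $\epsilon$ by the same dichotomy on singleton faces. Thus $\pi^1\neq\pi^2$ are valid and average to~$\pi$, so $\pi$ is not extreme.

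For the converse of part~(ii), assume the system has a unique solution and $\pi=\tfrac12(\pi^1+\pi^2)$ with $\pi^1,\pi^2$ valid. By \autoref{Theorem:functionContinuous} both are minimal, so $\pi^i(\ve r)=0$ for $\ve r\in\Z^2$ and $\pi^i(\f)=1$ by the symmetry condition at $\x=\0$. The affine imposing hypothesis then forces each $\pi^i$ to be affine on every triangle of~$\P_q$, hence continuous piecewise linear over~$\P_q$ and determined by its values on $\tfrac1q\Z^2$. The standard averaging argument (two non-negative slacks summing to zero are each zero) propagates every tight additivity of~$\pi$ to both $\pi^i$, so $\pi^i|_{\tfrac1q\Z^2}$ satisfies~\eqref{equation:system}. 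By uniqueness it agrees with $\pi|_{\tfrac1q\Z^2}$, and piecewise linearity extends this equality to $\pi^1=\pi^2=\pi$; hence $\pi$ is extreme.

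The main obstacle is the subadditivity verification in part~(i). It succeeds precisely because \autoref{lemma:vertices}, itself a consequence of the strong unimodularity of $\P_q$, reduces an infinite family of inequalities to finitely many lattice-vertex checks; equivalently, \eqref{equation:system} captures exactly the tight subadditivity relations visible at the vertices of $\Delta\P_q$. A secondary concern is $\pi^{\pm}\ge 0$ at vertices where $\pi(\u)=0$ with $\u\notin\Z^2$: there, subadditivity of~$\pi$ forces $\pi$ to be periodic with period~$\u$, producing enough additivity relations in~\eqref{equation:system} to pin $\bar\pi(\u)=0$ as well.
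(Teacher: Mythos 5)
Your proposal is correct and follows essentially the same route as the paper: part~(i) via a piecewise-linear extension of a kernel element of \eqref{equation:system}, scaled by a uniform bound coming from \autoref{lemma:vertices} and the vertices of $\Delta\P_q$, and part~(ii) by combining part~(i) with the observation that affine imposing forces any decomposition $\pi=\tfrac12(\pi^1+\pi^2)$ to consist of continuous piecewise linear functions over $\P_q$ whose lattice restrictions satisfy \eqref{equation:system}. The only superfluous step is your ``secondary concern'' about non-negativity of $\pi^{\pm}$: once $\pi^{\pm}$ is verified subadditive and symmetric with $\pi^{\pm}(\0)=0$ and vanishing on $\Z^2$, periodicity and boundedness already force $\pi^{\pm}\geq 0$, so there is nothing extra to check; your argument via $\u$-periodicity and summing $m\bar\varphi(\u)=\bar\varphi(m\u)=0$ is valid but not needed, and the paper does not address it separately.
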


The proof, similar to one in~\cite{basu-hildebrand-koeppe:equivariant},
appears in appendix~\ref{s:proof-theorem:systemNotUnique}. 

\subsection{Connection to a finite group problem}

\begin{theorem}\label{thm:1/4q test}
Let $\pi$ be a minimal continuous piecewise linear function over $\P_q$ that is diagonally constrained.  
Then $\pi$ is extreme if and only if the system of
equations~\eqref{equation:system} with $\tfrac{1}{4q}\Z^2$
has a unique solution.
\end{theorem}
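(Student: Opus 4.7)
The plan is to derive this theorem as a direct consequence of Theorem~\ref{theorem:systemNotUnique} together with the case analysis developed in Subsections~\ref{section:AI}--\ref{sec:non-extreme-by-diag-perturbation}. The forward implication (extreme $\Rightarrow$ fine system unique) will be essentially free: since $\P_{4q}$ refines $\P_q$, the function $\pi$ is also continuous piecewise linear over $\P_{4q}$, and so Theorem~\ref{theorem:systemNotUnique}(i) (contrapositive form), applied with $4q$ in place of $q$, immediately yields the desired implication.

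For the reverse direction, I will argue the contrapositive: if $\pi$ is not extreme, then the system on $\tfrac{1}{4q}\Z^2$ admits more than one solution. Start from a nontrivial decomposition $\pi = \tfrac12(\pi^1+\pi^2)$ with distinct minimal valid $\pi^1, \pi^2$; these are Lipschitz continuous by Theorem~\ref{Theorem:functionContinuous}. The argument then splits according to whether $\pi$ is affine imposing in $\I_{q,2}$. In the affirmative case, the definition of affine imposing forces $\pi^1$ and $\pi^2$ to be affine on every triangle of $\P_q$; combined with continuity, each $\pi^i$ is continuous piecewise linear over $\P_q$, hence also over the refinement $\P_{4q}$. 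In the negative case, Corollary~\ref{corollary:AIG4} directly supplies distinct minimal valid functions $\pi^1, \pi^2$ that are continuous piecewise linear over $\P_{4q}$ and average to $\pi$.

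In both cases I then have two distinct minimal functions $\pi^1, \pi^2$ that are continuous piecewise linear over $\P_{4q}$ and average to $\pi$. Their restrictions to $\tfrac{1}{4q}\Z^2$ must differ, because a continuous piecewise linear function over $\P_{4q}$ is determined by its values on the vertex set $\tfrac{1}{4q}\Z^2$. By the standard averaging argument (the nonnegative subadditivity slacks of $\pi^1$ and $\pi^2$ at a pair $(\u,\v)$ must both vanish whenever their average vanishes), each restriction satisfies every tight subadditivity relation of $\pi$ at pairs in $\tfrac{1}{4q}\Z^2$, and the normalizations $\pi^i(\0)=0$, $\pi^i(\f)=1$, and vanishing at the unit lattice points follow from minimality of $\pi^i$. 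This produces the two distinct solutions to the fine system required for the contrapositive.

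The only substantive step is Case~A of the reverse direction: the affine imposing hypothesis is precisely what promotes an arbitrary decomposition into piecewise linear pieces over $\P_q$, so that restriction to $\tfrac{1}{4q}\Z^2$ is well-controlled and injective on nontrivial decompositions. Subsection~\ref{section:AI} (culminating in Theorem~\ref{theorem:AI}) and Corollary~\ref{corollary:AIG4} from Subsection~\ref{sec:non-extreme-by-diag-perturbation} exist precisely to cover the affine imposing and non-affine imposing cases, respectively; once both are in hand, the remainder of the proof is bookkeeping.
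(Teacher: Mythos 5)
Your proof is correct and follows essentially the same route as the paper: the forward direction is the contrapositive of Theorem~\ref{theorem:systemNotUnique}(i) over $\P_{4q}$, and the reverse direction hinges on the dichotomy between $\pi$ being affine imposing (so any decomposition is forced to be piecewise linear over $\P_q$, hence $\P_{4q}$) and not being affine imposing (handled by Corollary~\ref{corollary:AIG4}). The paper organizes the reverse direction directly (system unique $\Rightarrow$ no piecewise linear decomposition over $\P_{4q}$ $\Rightarrow$ affine imposing via the contrapositive of Corollary~\ref{corollary:AIG4} $\Rightarrow$ extreme by Theorem~\ref{theorem:systemNotUnique}(ii)), while you argue the contrapositive with an explicit case split and re-derive the relevant half of Theorem~\ref{theorem:systemNotUnique}(ii) inline via the averaging argument, but these are logically equivalent and rely on the same lemmas.
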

\begin{proof}
  Since $\pi$ is continuous piecewise linear over $\P_q$, it is also continuous piecewise linear over $\P_{4q}$.  The forward direction is the contrapositive of
  Theorem~\ref{theorem:systemNotUnique}\,(i), applied when we view $\pi$ piecewise linear over $\P_{4q}$. 
  For the reverse direction, observe that if the system of
  equations~\eqref{equation:system} with $\frac1{4q}\Z^2$ has a unique
  solution, then there cannot exist distinct minimal $\pi^1, \pi^2$ that are
  continuous piecewise linear over~$\P_{4q}$ such that $\pi = \frac12\pi^1 +
  \frac12\pi^2$.  By the contrapositive of Corollary \ref{corollary:AIG4},
  $\pi$ is affine imposing in $\I_{q,2}$.  
  Then $\pi$ is also affine imposing on $\I_{4q,2}$ since it is a finer set.  
  By \autoref{theorem:systemNotUnique}\,(ii), since $\pi$ is affine imposing in $\I_{4q,2}$ and the system of
  equations~\eqref{equation:system} on $\P_{4q}$ has a unique solution, $\pi$ is extreme.  
\end{proof}

Theorem \ref{thm:main} is proved by testing for minimality using Lemma \ref{minimality-check} and then testing for extremality using Theorem \ref{thm:1/4q test}.   Theorem \ref{thm:1/4q} is a direct consequence  of Theorem \ref{thm:1/4q test}.
\bibliographystyle{abbrv}
\bibliography{../bib/MLFCB_bib}

\clearpage
\appendix
\section{Appendix}

\subsection{Equivariant perturbations} 
\label{s:equivariant}

In this section we outline the theory of equivariant perturbations for the
infinite group problem, used first in
\cite{basu-hildebrand-koeppe:equivariant} for the case $k=1$.
  
We consider a subgroup of the group $\Aff(\R^k)$ of invertible affine linear
transformations of~$\R^k$ as follows. 
\begin{definition}
  For a point $\ve r \in \R^k,$ define the \emph{reflection} $\rho_{\ve
    r}\colon \R^k\to \R^k$, $\ve x
  \mapsto \ve r-\ve x$.  For a vector $t \in \R^k,$ define the \emph{translation}
  $\tau_{\ve t} \colon \R^k\to \R$, $\ve x \mapsto \ve x + \ve t$.  
\end{definition}
Given a set $R$ of points and a set $U$ of vectors, we will
define the subgroup 
$$\Gamma= \langle\, \rho_{\ve r}, \tau_{\ve t} \st \ve r\in R,\, \ve t\in
U\,\rangle.$$  Let $\ve r,\ve s,\ve w,\ve t\in \R^k$.  Each reflection is an
involution: $\rho_{\ve r} \circ \rho_{\ve r} =\textit{id}$,  two reflections give one
translation: $\rho_{\ve r} \circ \rho_{\ve s} = 
\tau_{\ve r-\ve s}$.  Thus, if we assign a \emph{character} $\chi(\rho_{\ve r}) = -1$ to
every reflection and $\chi(\tau_{\ve t}) = +1$ to every translation, then this
extends to a \emph{group character} of~$\Gamma$, that is, a group homomorphism
$\chi\colon \Gamma\to\C^\times$.  

On the other hand, not all pairs of reflections need to be
considered: $\rho_{\ve s}\circ \rho_{\ve w} = (\rho_{\ve s}\circ \rho_{\ve r})
\circ (\rho_{\ve r}
\circ \rho_{\ve w}) = (\rho_{\ve r}\circ \rho_{\ve s})^{-1} \circ (\rho_{\ve
  r} \circ \rho_{\ve w})$.
Thus the subgroup~$T = \ker \chi$ of translations in~$\Gamma$ is generated as
follows. Let $\ve r_1 \in R$ be any of the reflection points; then 
$$T = 
\langle\, \tau_{\ve r-\ve r_1}, \tau_{\ve t} \st \ve r\in R,\, \ve t\in U\,
\rangle.$$  
It is \emph{normal} in~$\Gamma$, as it is stable by conjugation by any
reflection: $\rho_{\ve r} \circ \tau_{\ve t} \circ \rho_{\ve r}^{-1} =
\tau_{-\ve t}$.  If $\gamma \in \Gamma$ is
not a translation, i.e., $\chi(\gamma) = -1$, then it is generated by an odd
number of reflections, and 
thus can be written as $\gamma = \tau \rho_{\ve r_1}$ with $\tau\in T$.  Thus $\Gamma /
T = \langle\rho_{\ve r_1}\rangle$ is of order~$2$. In short, we have the following
lemma. 
\begin{lemma} \label{lemma:semidirect} The group $\Gamma$ is the semidirect
  product $T \rtimes \langle \rho_{\ve r_1} \rangle$, where the (normal)
  subgroup of translations can be written as $$T = \{\, \tau_{\ve t} \st \ve t
  \in \Lambda\,\},$$ where $\Lambda$ is the additive subgroup $$\Lambda =
  {\langle\, \ve r-\ve r_1, \ve t \st \ve r\in R,\, \ve t\in U\, \rangle}_\Z \subseteq
  \R^k.$$%
\end{lemma}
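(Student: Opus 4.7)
The plan is to separate the statement into two logically independent claims: (a) the semidirect product decomposition $\Gamma = T \rtimes \langle \rho_{\ve r_1}\rangle$, and (b) the concrete description of the translation subgroup as $T = \{\tau_{\ve t} : \ve t \in \Lambda\}$. The narrative preceding the lemma already sketches most of the ingredients; my job is to organize them and to nail down the one nontrivial step, which is showing that no ``accidental'' translations appear in $\Gamma$ beyond those generated by $\Lambda$.

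For part (a), I would first verify that $\chi$ is well-defined as a homomorphism $\Gamma \to \{\pm 1\}$: every generator acts as $\ve x \mapsto \pm\ve x + \ve c$, so the linear-part sign is multiplicative and the assignment $\chi(\rho_{\ve r}) = -1$, $\chi(\tau_{\ve t}) = +1$ extends consistently. Then $T = \ker\chi$ is exactly the set of translations in $\Gamma$, hence a normal subgroup. Since $\chi(\rho_{\ve r_1}) = -1$, we have $\rho_{\ve r_1}\notin T$ and $\langle\rho_{\ve r_1}\rangle \cap T = \{\mathit{id}\}$. Moreover $\chi$ is surjective onto $\{\pm 1\}$, so $\Gamma/T$ has order $2$ with $\rho_{\ve r_1}$ as coset representative, and the semidirect product structure follows.

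For part (b), set $\Lambda_0 = \{\,\ve t \mid \tau_{\ve t}\in T\,\}$, which is an additive subgroup of $\R^k$. The inclusion $\Lambda \subseteq \Lambda_0$ is immediate: each $\ve t\in U$ gives $\tau_{\ve t}\in T$ directly, and $\tau_{\ve r - \ve r_1} = \rho_{\ve r}\circ\rho_{\ve r_1}\in T$ for $\ve r\in R$. For the reverse inclusion, I would establish a normal form: every element of $\Gamma$ can be written as $\tau_{\ve u}\rho_{\ve r_1}^{\epsilon}$ with $\ve u \in \Lambda$ and $\epsilon\in\{0,1\}$. The key identities are $\rho_{\ve r} = \tau_{\ve r-\ve r_1}\rho_{\ve r_1}$ (rewriting each reflection generator in terms of $\rho_{\ve r_1}$ with a translation in $\Lambda$) and the commutation relation $\rho_{\ve r_1}\tau_{\ve u} = \tau_{-\ve u}\rho_{\ve r_1}$. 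An easy induction on word length then shows closure of the normal form under composition, with the translation component staying in $\Lambda$ (added or subtracted according to whether it passes a $\rho_{\ve r_1}$). Given $\tau_{\ve s}\in T$, its normal form must have $\epsilon = 0$, yielding $\ve s\in\Lambda$ and hence $\Lambda_0 = \Lambda$.

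The main obstacle is this second inclusion $\Lambda_0\subseteq\Lambda$: in principle, relations among the generators could conspire to produce new translation vectors not lying in $\Lambda$, so one really has to control arbitrary words in the generators. The normal form argument is what rules this out, and once it is in place everything else is bookkeeping. A pleasant by-product is that the commutation relation $\rho_{\ve r_1}\tau_{\ve u}\rho_{\ve r_1}^{-1} = \tau_{-\ve u}$ displays the semidirect action of $\langle\rho_{\ve r_1}\rangle$ on $T\cong \Lambda$ explicitly as negation, reaffirming the claim of part (a).
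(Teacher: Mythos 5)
Your proposal is correct and follows essentially the same route as the discussion the paper gives immediately before the lemma: the character $\chi$, the identity $\rho_{\ve r} = \tau_{\ve r - \ve r_1}\circ\rho_{\ve r_1}$, and the conjugation relation $\rho_{\ve r_1}\tau_{\ve u}\rho_{\ve r_1}^{-1} = \tau_{-\ve u}$. The one thing you add is the explicit normal-form induction establishing $\Lambda_0 \subseteq \Lambda$, which tightens a step the paper only sketches by asserting that not all pairs of reflections need be considered.
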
%
\begin{definition}
  A function $\psi\colon\R^k\to\R$ is called \emph{$\Gamma$-equivariant} if 
  it satisfies the \emph{equivariance formula} 
  \begin{equation}\label{eq:equivariance}
    \psi(\gamma(\x)) = \chi(\gamma) \psi(\x)
    \quad\text{for $\x\in \R$ and $\gamma \in \Gamma$}.
  \end{equation}  
\end{definition}
We note that if $\Lambda$ is discrete, i.e., a lattice, then there is a way to
construct continuous $\Gamma$-equivariant functions by defining them on a
fundamental domain and extending them to all of~$\R^k$ via the equivariance
formula~\eqref{eq:equivariance}.  The same is true for the case where
$\Lambda$ is a mixed lattice, i.e., a direct sum of a lattice in a
subspace and another subspace.  We omit the details. 

\subsection{Polyhedral complexes $\P_q$, $\Delta\P_q$, and unimodularity}
\label{s:complexes-unimodularity}

We first comment that $\f$ must be a vertex of $\P_q$ of any minimal
valid function.  We omit the proof here as it is very similar to (\cite{basu-hildebrand-koeppe:equivariant}, Lemma 2.1).
\begin{lemma}
\label{lemma:faVertex}
If $\pi$ is a minimal function, then $\f \in \frac{1}{q}\Z^2$.  
\end{lemma}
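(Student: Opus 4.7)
The plan is to argue by contradiction; assume $\f \notin \tfrac{1}{q}\Z^2$. From minimality (\autoref{thm:minimal}) and the symmetry condition, $\pi(\0)=0$ yields $\pi(\f) = 1$, and combining symmetry with $\pi \geq 0$ gives $\pi \leq 1$ everywhere. Thus $\f$ is a global maximum. Since $\f \notin \tfrac{1}{q}\Z^2$, it lies in the relative interior of a face of $\P_q$ of dimension $1$ or $2$. In either case, because $\pi$ is affine on each triangle of $\P_q$ and $\f$ is an interior maximum of $\pi$ on the surrounding cells, I claim that $\pi \equiv 1$ on some triangle $T \in \I_{q,2}$ with $\f$ in its closure. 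In the 2-face sub-case this is immediate from the fact that an affine function attaining its global maximum at an interior point is constant. In the edge sub-case, one first concludes $\pi \equiv 1$ on the edge (by the same affine-maximum argument applied to $\pi$ restricted to the edge) and then extends $\pi \equiv 1$ to an adjacent triangle by exploiting the affine structure together with $\pi \leq 1$.

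By the symmetry condition, $\pi \equiv 0$ on the reflected region $\f \ominus T$. The key observation is that $\0 \in \intr(\f \ominus T)$ (since $\f \in \intr(T)$), so an open ball around $\0$ lies inside $\f \ominus T$. Because $\f \notin \tfrac{1}{q}\Z^2$, the triangle $\f \ominus T$ is not aligned with $\P_q$, and this open ball meets the interior of every cell of $\P_q$ incident to $\0$. On each such cell, $\pi$ is affine and vanishes on an open subset, hence vanishes identically. This forces $\pi = 0$ at every lattice neighbor of $\0$ in $\tfrac{1}{q}\Z^2$.

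Applying the symmetry once more to these neighbors yields points $\f \ominus \v \notin \tfrac{1}{q}\Z^2$ at which $\pi = 1$, and the affine-on-cell argument then extends $\pi \equiv 1$ to additional cells of $\P_q$. Iterating alternates between symmetric reflection and affine expansion; the $0$- and $1$-regions grow strictly at each step because the $\f$-translates of cells are misaligned with the grid. Since $\R^2/\Z^2$ contains only $2q^2$ cells of $\P_q$, the two expanding regions must eventually share a cell, a contradiction. The main obstacle is bookkeeping this iteration in 2D: the 1D analog in \cite{basu-hildebrand-koeppe:equivariant} reduces to a short interval-expansion argument, whereas in 2D the triangulation structure forces more careful case analysis, in particular for the edge sub-case in the first step and for tracking which triangles are forced to take value $0$ or $1$ at each iteration.
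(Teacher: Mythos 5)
The paper does not actually prove this lemma; it only remarks that the argument is ``very similar to'' Lemma~2.1 of \cite{basu-hildebrand-koeppe:equivariant} (the 1D version). So there is no paper proof to measure your sketch against; it has to stand on its own.

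Your 2-face sub-case is essentially sound, and can be finished without the back-and-forth iteration you worry about at the end. Once $\pi \equiv 1$ on a triangle $T$ with $\f \in \intr(T)$, symmetry gives $\pi \equiv 0$ on $\f \ominus T$, and since $\0 \in \intr(\f \ominus T)$ an open ball around $\0$ meets the interior of all six cells of $\P_q$ incident to $\0$, so $\pi \equiv 0$ on each. In particular $\pi(\pm\frac1q(1,0)) = \pi(\pm\frac1q(0,1)) = 0$, and subadditivity then forces $\pi(\x + \frac1q(1,0)) = \pi(\x)$ and $\pi(\x + \frac1q(0,1)) = \pi(\x)$ for all $\x$, i.e.\ $\pi$ is $\frac1q\Z^2$-periodic. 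As $\pi$ already vanishes on a fundamental domain of that lattice, $\pi\equiv 0$, contradicting $\pi(\f)=1$.

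The edge sub-case, however, contains a genuine gap, not merely bookkeeping. Your step ``extends $\pi \equiv 1$ to an adjacent triangle by exploiting the affine structure together with $\pi \le 1$'' is simply false: an affine function on a triangle $T$ can equal $1$ on an edge $e$ of $T$ and still decrease strictly into the interior of $T$; nothing in $\pi\le 1$ rules this out. Moreover, even if such an extension were available, here $\f$ lies on $\partial T$, so $\0=\f-\f$ lies on the boundary of $\f\ominus T$, and your key claim that $\0 \in \intr(\f\ominus T)$ fails; you only get $\pi\equiv 0$ on a segment through $\0$ parallel to $e$. That yields periodicity of $\pi$ only in the direction of $e$ and, pushing subadditivity further, that $\pi$ depends only on the coordinate transverse to $e$. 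This is no contradiction at all: a function of the form $\pi(\x)=g(\ve c\cdot\x)$ built from a minimal 1D function $g$ satisfies the symmetry relation $\pi(\x)+\pi(\f-\x)=1$ for a whole line of candidate $\f$'s, many off the $\frac1q\Z^2$ grid. So the edge sub-case requires a different idea (or a restatement of the lemma allowing replacement of $\f$ by a lattice representative); it cannot be fixed by more careful tracking of which triangles turn $0$ or $1$.
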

%\begin{proof}
%First, suppose that there exists an $\x \in [0,1]^2$ such that $\pi(\x) > 1$.  Since $\pi$ is symmetric, $\pi(\x) + \pi(\f\ominus \x) = 1$, which implies that $\pi(\f\ominus \x) < 0$, which is a contradiction since we assumed that $\pi \geq 0$.  Therefore $0 \leq \pi \leq 1$.
%
%Now, suppose that $\f \notin \frac{1}{q}\Z^2$, i.e., there exists an $I \in \P_q$, and a face $F$ of $I$ such that $f \in \relint(F)$ and $\dim(F) \geq 1$. 
%Symmetry and the condition that $\pi(\0) = 0$ imply that $\pi(\f) = 1$.  Since $\pi \leq 1$ and $\pi$ is affine in $F$, it follows that $\pi \equiv 1$ on $F$.  The set $\f\ominus F$ contains the origin and by symmetry, $\pi \equiv 0$ on $\f\ominus F$.  Let $[\0,\bar \x] \subset [0,1]^2$ be the largest line segment containing $\f \ominus F$ such that $\pi \equiv 0$ on $[\0,\bar \x]$.  Since $[\0,\bar \x]$ is the largest such interval, for every small $\epsilon > 0$ we have that $ (1+ \epsilon)\bar \x \in [0,1]^2$ such that $2\epsilon \bar \x, (1- \epsilon \bar \x)\in [0,\bar \x]$ and $ \pi(\bar \x(1 + \epsilon)) >0$.  But then $\pi(2\epsilon \bar \x) + \pi(\bar \x (1- \epsilon)) = 0  < \pi(\bar \x(1+\epsilon))$, which violates subadditivity, and therefore is a contradiction.
%\end{proof}

\begin{definition}\label{def:valid-triple}
For $I,J,K \in \P_q\setminus \{\emptyset\}$, we say $(I,J,K)$ is a \emph{valid triple} provided that the following occur:
\begin{enumerate}[i.]
\item $K \subseteq I \oplus J$,
\item For all $\u \in I$ there exists a $\v \in J$ such that $\u\oplus \v \in K$,
\item For all $\v \in J$ there exists a $\u \in I$ such that $\u \oplus \v \in K$,
\end{enumerate}
\end{definition}
%% \texttt{i, ii, iii together are equivalent to:  Whenever $I' \subseteq I,
%%   J'\subseteq J, K' \subseteq K$ (as sets, not as elements of the complex)
%%   such that $F(I,J,K) = F(I', J', K')$, we have $I' = I$, $J' = J$, $K' = K$.
%%   Can iv be changed in the same way, talking about supersets rather than
%%   including faces? --Matthias}

Equivalently, a valid triple $(I,J,K)$ is characterized by the following property.
\begin{enumerate}
\item[iv.] Whenever $I', J', K'$ are sets
  such that $I'\subseteq I$, $J' \subseteq J$, $K' \subseteq K$ and
  $F(I,J,K) = F(I', J', K')$ we have that $I' = I$, $J' = J$, $K' = K$.
\end{enumerate}

The construction of $\P_q$ has convenient properties such as the following.
\begin{lemma}\label{lemma:I+J}
Let $I, J \in \P_q$.  Then $I \oplus J$ and $I \ominus J$ are both unions of faces in $\P_q$.
\end{lemma}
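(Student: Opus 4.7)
The plan is to exploit the strong combinatorial structure of the arrangement~$\mathcal H_q$: it has only three distinct normal directions $(1,0)$, $(0,1)$, $(1,1)$, each pair of which forms a unimodular basis of~$\Z^2$. Consequently, every face $I\in\P_q$ is a polygon with vertices in $\tfrac1q\Z^2$ and edges perpendicular to one of these three normals, with the supporting lines belonging to~$\mathcal H_q$. I will prove the statement first for the ordinary Minkowski sum $I+J$ and difference $I-J$ in $\R^2$, and then descend to $\R^2/\Z^2$; the descent is automatic because $\Z^2 \subseteq \tfrac1q\Z^2$, so integer translations preserve the complex~$\P_q$.

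The first step is to show that $I+J$ is bounded by lines of~$\mathcal H_q$. For each normal $\ve n \in \{\pm(1,0),\pm(0,1),\pm(1,1)\}$, the support function satisfies $h_{I+J}(\ve n) = h_I(\ve n) + h_J(\ve n)$. Each support value is attained at a vertex in $\tfrac1q\Z^2$, and since $\ve n$ has integer entries, both summands lie in $\tfrac1q\Z$. Moreover, in the plane, the edges of a Minkowski sum are parallel to edges of the summands, so every edge of $I+J$ is perpendicular to one of our three integer normals. Hence $I+J$ is a convex polygon whose facets are segments of lines from~$\mathcal H_q$.

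The second step, which I regard as the only genuinely non-trivial point, is to argue that any convex polygon $P$ bounded by lines of $\mathcal H_q$ is automatically a union of 2-faces (together with their subfaces) of~$\P_q$. The interior of any 2-cell $T$ of the arrangement is disjoint from $\mathcal H_q$, hence disjoint from~$\partial P$. Because $P$ is convex and $\intr(T)$ is connected, $\intr(T)$ is either contained in $\intr(P)$ or disjoint from~$P$; in the first case $T\subseteq P$. Taking the union of all triangles with $T\subseteq P$ together with their bounding edges and vertices recovers $P$ as a union of elements of~$\P_q$.

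Finally, for the difference, note that $-J$ has vertices in $\tfrac1q\Z^2$ and facets along $\mathcal H_q$, since $\mathcal H_q$ is symmetric under $\x\mapsto-\x$; thus $I-J = I+(-J)$ is handled by the same two steps. Reducing modulo $\Z^2$ preserves everything because integer shifts are automorphisms of~$\P_q$, so the images $I\oplus J$ and $I\ominus J$ in the quotient complex remain unions of faces. The main obstacle is purely geometric and lives in Step~2: once one has the support-function computation placing facets of $I+J$ on lines of $\mathcal H_q$, the convex-polygon-meets-arrangement argument delivers the conclusion.
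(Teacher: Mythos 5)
Your proof follows essentially the same approach as the paper: describe $I$ and $J$ via inequalities with normals $(1,0)$, $(0,1)$, $(1,1)$, observe that the Minkowski sum inherits these normals with right-hand sides in $\tfrac1q\Z$ (your support-function computation is exactly the paper's ``tight'' choice of $\ve b^1+\ve b^2$), and conclude that a region bounded by lines of $\mathcal H_q$ is a union of cells of $\P_q$. The only stylistic difference is that the paper makes the unimodularity explicit by exhibiting the totally unimodular constraint matrix $A$, whereas you appeal directly to the $\tfrac1q\Z^2$ vertex structure and the normal-cone property of Minkowski sums; the conclusion and the supporting lemmas are the same.
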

\begin{proof}
By construction, for any face $K \in \P_q$, the set $\{\,\x \bmod{\ve1} \st \x
\in K\,\}$ is also a face in $\P_q$.  Therefore we only need to show that
the Minkowski sums $I+J$ and $I-J$ are unions of faces in $\P_q$. 
Let 
$$
A =  \begin{bmatrix}  1 & -1 &  0 & 0 &  1 & -1 &\\
 0 & 0 & 1 & -1  & 1 & -1  \end{bmatrix}^T
%\begin{bmatrix}  1 & 0 \\ -1 & 0 \\ 0 & 1 \\ 0 & -1 \\ 1 & 1 \\ -1 & -1  \end{bmatrix}.
$$
Let $\ve a^i$ be the $i^\text{th}$ row vector of $A$. 
Then there exists vectors $\ve b^1, \ve b^2$ such that $I = \{\, \x \st A \x
\leq \ve b^1\,\}$, $J = \{\, \y \st A \y \leq \ve b^2\,\}$.
Moreover, due to the total unimodularity of the matrix~$A$, the right-hand
side vectors
$\ve b^1, \ve b^2$ can be chosen so that $\ve b^1, \ve b^2$ are tight, i.e., 
\begin{equation}
\max_{\x \in I} \ve a^i \cdot \x = \ve b^1_i, \quad    \max_{\y \in J}  \ve a^i \cdot \y = \ve b^2_i,
\label{eq:support-vector}
\end{equation}
and $\ve b^1, \ve b^2 \in \frac1q\Z^2$.

We claim that $I + J = \{\, \x \st A \x \leq \ve b^1 + \ve b^2 \,\}$.  Clearly
$I+J \subseteq \{\, \x \st A \x \leq \ve b^1 + \ve b^2 \,\}$.  We show the
reverse direction.    
Let $K'$ be a facet (edge) of $I+J$.  Then $K'=I'+J'$, where $I'$ is a face of
$I$ and $J'$ is a face of $J$.  Without loss of generality, assume that $I'$
is an edge; then $J'$ is either a vertex or an edge.  
By well-known properties of Minkowski sums, the normal cone of $K'$ is the
intersection of the normal cones of $I'$ in $I$ 
and $J'$ in $J$.  Thus $K'$ has the same normal direction as the facet
(edge)~$I'$.  This proves that $I + J = \{\, \x \st A \x \leq \ve b\,\}$ for
some vector $\ve b$. 

Let $\x^*$, $\ve y^*$ be maximizers in~\ref{eq:support-vector}.  Then $\x^* + \y^* \in I + J$.  Then 
$$
\ve b^1_i  + \ve b^2_i  = \ve a^i \cdot \x^* +  \ve a^i \cdot \y^* \leq  \max_{\ve z \in I + J}  \ve a^i \cdot \ve z \leq \max_{\x \in I} \ve a^i \cdot \x  + \max_{\y \in J}  \ve a^i \cdot \y = \ve b^1_i + \ve b^2_i.
$$
Therefore, $\max_{\ve z \in I + J}  \ve a^i \cdot \ve z = \ve b^1_i + \ve
b^2_i$,  which shows that every constraint $a_i \cdot \ve z \leq \ve b^1_i$ is
met at equality, and therefore $I+J = \{\, \x \st A \x \leq \ve b^1 + \ve b^2\,\}$
and we conclude that $I + J$ must be a union of subsets in $\P_q$. 

The case $I - J = \{\,\ve z - \y \st \ve z \in K, \y \in J\,\}$ is shown similarly.
% Similarly, we can show that $I- J = \{\, \x \st A \x \leq \ve b^3 - \ve
% b^2\,\}$ (this is again due to the specific form of $A$).  Therefore, $I-J$ is
% also a union of faces in $\P_q$.  
\end{proof}

\begin{lemma}\label{lemma:covered-by-maximal-valid-triples}
$
E(\pi) = \bigcup \{ F(I,J,K) \st (I,J,K) \in E_{\max{}}(\pi, \P_q)  \}.
$
\end{lemma}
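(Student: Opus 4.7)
The forward inclusion $\bigcup F(I,J,K) \subseteq E(\pi)$ is immediate from the definition of $E(\pi,\P_q)$: any maximal valid triple $(I,J,K)$ satisfies $\pi(\x) + \pi(\y) = \pi(\x\oplus\y)$ on all of $F(I,J,K)$. The substance of the lemma is the reverse inclusion, so the plan is to take an arbitrary $(\x_0, \y_0) \in E(\pi)$ and locate a maximal valid triple in $E(\pi,\P_q)$ whose $F$-set contains it.

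My plan is to start by constructing a candidate triple directly from the point: let $I_0, J_0, K_0$ be the unique faces of $\P_q$ whose relative interiors contain $\x_0$, $\y_0$, and $\x_0 \oplus \y_0$ respectively, so tautologically $(\x_0,\y_0) \in F(I_0, J_0, K_0)$. The first and main step is to verify that $(I_0, J_0, K_0)$ is a valid triple in the sense of \autoref{def:valid-triple}. I would establish each of the three conditions by projection. For instance, the first-coordinate projection of $F(I_0, J_0, K_0)$ equals $I_0 \cap (K_0 \ominus J_0)$; by \autoref{lemma:I+J}, $K_0 \ominus J_0$ is a union of faces of $\P_q$, so the intersection is a union of sub-faces of $I_0$ in the complex $\P_q$. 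This union contains $\x_0 \in \relint(I_0)$, but the only sub-face of $I_0$ in $\P_q$ meeting $\relint(I_0)$ is $I_0$ itself, so the projection equals $I_0$. The other two conditions follow symmetrically, using also that $\y_0 \in \relint(J_0)$ and $\x_0 \oplus \y_0 \in \relint(K_0)$.

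Next I would show $(I_0,J_0,K_0) \in E(\pi,\P_q)$. The restriction $\Delta\pi_{F(I_0,J_0,K_0)}(\u,\v) = \pi_{I_0}(\u) + \pi_{J_0}(\v) - \pi_{K_0}(\u \oplus \v)$ is affine: since $\u \oplus \v \in K_0$ throughout the connected polytope $F(I_0,J_0,K_0)$, the lattice shift $\ve m = \u+\v - \u\oplus\v$ is a continuous $\Z^2$-valued function, hence constant. Subadditivity gives $\Delta\pi \geq 0$, and $\Delta\pi(\x_0,\y_0)=0$ by hypothesis. A short neighborhood argument, again using that $\x_0, \y_0, \x_0 \oplus \y_0$ lie in the respective relative interiors, shows $(\x_0,\y_0) \in \relint(F(I_0,J_0,K_0))$, so the affine nonnegative function $\Delta\pi_{F(I_0,J_0,K_0)}$ must vanish identically.

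Finally, the poset $E(\pi, \P_q)$ has finitely many elements modulo $\Z^2$, so $(I_0,J_0,K_0)$ is dominated by some $(I,J,K) \in E_{\max{}}(\pi,\P_q)$; then $(\x_0,\y_0) \in F(I_0,J_0,K_0) \subseteq F(I,J,K)$, completing the argument. The main obstacle I anticipate is the validity check in the second paragraph, which depends essentially on the unimodular structure of $\P_q$ captured by \autoref{lemma:I+J} to guarantee that projections of $F$-sets decompose into faces of $\P_q$.
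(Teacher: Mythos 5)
Your proposal is correct and matches the paper's proof in all essentials: both select the unique faces whose relative interiors contain $\x_0$, $\y_0$, $\x_0\oplus\y_0$ (the paper phrases this as ``minimal faces by set inclusion containing'' the points), both invoke \autoref{lemma:I+J} to show the resulting triple is valid (the paper argues via minimality of the chosen faces, you equivalently via the coordinate projections of $F$), both then deduce $(I_0,J_0,K_0)\in E(\pi,\P_q)$ from affinity of $\Delta\pi$ on $F$, nonnegativity, and vanishing at the relative-interior point, and both finish by passing to a maximal element of the poset. The only cosmetic differences are your projection phrasing and your slightly more explicit justification of why $\Delta\pi_F$ is affine, which the paper takes as an observation.
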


\begin{proof}
Clearly the right hand side is a subset of $E(\pi)$.  We show $E(\pi)$ is a subset of the right hand side.
Suppose $(\x,\y) \in E(\pi)$.  Let $I, J, K \in \P_q$ be minimal faces by set inclusion containing $\x$, $\y$, and $\x \oplus \y$, respectively.  We show that $(I,J,K)$ is a valid triple.  
By Lemma \ref{lemma:I+J}, $I \oplus J$ is a union of faces in $\P_q$.  Since $\x \oplus \y \in I\oplus J$ and $\x \oplus \y \in K$, we have that $K \cap (I+J) \neq \emptyset$, and in particular, is a union of faces of $\P_q$ containing $\x + \y$.  Since $K$ was chosen to be a minimal such face in $\P_q$ containing $\x\oplus\y$, we have that $K \subseteq I\oplus J$.  \\

Similarly, by Lemma \ref{lemma:I+J},  $K \ominus J$ is also a union of sets in $\P_q$ containing~$\x$.  Since $I$ is a minimal set containing $\x$, it must be that $I \subseteq K \ominus J$.  Therefore, for any $\u \in I$, there exists a $\v \in J$ such that $\u\oplus\v \in K$. \\
Similarly, we find that for any $\v \in J$, there exists a $\u \in I$ such that $\u\oplus \v \in K$.

Since $I, J, K$ were chosen to be minimal in $\P_q$, the triple satisfies criterion (iv) of being a valid triple.  Hence, $(I,J,K)$ is a valid triple.

Next we argue that $(I,J,K) \in E(\pi, \P_q)$.
This is because $\Delta \pi$ is affine in $F(I,J,K)$, $\Delta \pi \geq 0$, $(\x,\y) \in \relint(F(I,J,K))$, $\Delta\pi(\x,\y) = 0$ and therefore $\Delta\pi|_{F(I,J,K)} = 0$, i.e., $(I,J,K) \in E(\pi, \P_q)$.

Lastly, if $(I,J,K)$ is not maximal in $E(\pi, \P_q)$, then there exists a maximal $(I', J', K')$ such that $F(I', J', K') \supset (I,J,K)$, namely, $(\x,\y) \in F(I', J', K')$.
\end{proof}

Next we study the complex $\Delta \P_q$. 

\begin{proof}[of \autoref{lemma:vertices}]
Since $F \in \Delta \P_q$, we can write $F$ using the system of inequalities $F = \{(\x,\y) \in \R^4 \colon \hat A (\x,\y) \leq \ve b\}$  where $\ve b \in \frac{1}{q} \Z^9$, the matrix $A$ is given by 
$$
A = 
\begin{bmatrix}
 1 & 0 & 1 & 0 & 0 & 0 & 1 & 0 & 1\\ 0 & 1 & 1 & 0 & 0 & 0 & 0 & 1 & 1\\ 0 & 0 & 0 & 1 & 0 & 1 & 1 & 0 & 1\\ 0 & 0 & 0 & 0 & 1 & 1 & 0 & 1 & 1 
\end{bmatrix}^T
%
%\begin{bmatrix}
%1 & 0 & 0 & 0 \\
%0 & 1 & 0 & 0 \\
%1 & 1 & 0 & 0 \\
%0 & 0 &1 & 0\\
%0 & 0 &0 & 1\\
%0 & 0 &1 & 1\\
%1 & 0 & 1 & 0\\
%0 & 1 &0 & 1 \\
%1 & 1 & 1 & 1
%\end{bmatrix},
$$
and the matrix $\hat A$ differs from $A$ only by scaling each row individually
by $\pm 1$.  (This inequality representation of~$F$ will usually be redundant.)
By checking every subdeterminant of the matrix $A$, it can be verified that $A$ is totally unimodular, and therefore $\hat A$ is also totally unimodular.   Therefore, the polytope 
$qF = \{\,(\x,\y) \in \R^4 \colon \hat A (\x,\y) \leq q \ve b\,\}$ has integral vertices in $\Z^4$.  

It follows that $P$ has vertices in $\frac{1}{q} \Z^4$.  Therefore, $\x,\y \in \frac{1}{q} \Z^2$ and therefore are vertices of $\P_q$.
\end{proof}

\subsection{Continuity results}
\label{s:continuity}

In this section we prove \autoref{Theorem:functionContinuous} on continuity.  Although similar results appear in~\cite{infinite2}, we provide proofs of these facts to keep this paper more self-contained.  %\texttt{Aren't these proofs from GJ and elsewhere for the 1-dim case only?  If so, we should delete this comment - Robert.\\}
We first prove the following lemma.  

\begin{lemma}\label{lem:lipschitz}
%\texttt{Amitabh: Lemma and proof need to generalized.  Send both to Appendix}
If $\theta\colon \R^k \to \R$ is a subadditive function and $\limsup_{h\to 0}
\frac{\lvert\theta(\ve h)\rvert}{\lvert \ve h\rvert} = L < \infty$, then $\theta$ is Lipschitz continuous with Lipschitz constant $L$.
\end{lemma}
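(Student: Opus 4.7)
\textbf{Proof plan for Lemma \ref{lem:lipschitz}.}
The plan is to combine subadditivity with the given $\limsup$ bound to obtain a uniform Lipschitz estimate. The key observation is that subadditivity controls differences via the single-variable quantity $\theta(\x-\y)$: from $\theta(\x) = \theta(\y + (\x-\y)) \leq \theta(\y) + \theta(\x-\y)$ we get $\theta(\x) - \theta(\y) \leq \theta(\x-\y)$, and by symmetry $\theta(\y) - \theta(\x) \leq \theta(\y - \x)$. Thus it suffices to show, for every $\epsilon > 0$, that $\theta(\ve h) \leq (L + \epsilon)|\ve h|$ for every $\ve h \in \R^k$ (not merely for small $\ve h$).

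First, I would use the $\limsup$ hypothesis to extract a scale: given $\epsilon > 0$, there exists $\delta > 0$ such that $|\theta(\ve h)| \leq (L+\epsilon)|\ve h|$ for all $\ve h$ with $0 < |\ve h| < \delta$; in particular $\theta(\ve h) \leq (L+\epsilon)|\ve h|$ on this punctured neighborhood. Next, I would bootstrap this local bound to a global one using iterated subadditivity: for an arbitrary nonzero $\ve h \in \R^k$, pick an integer $n$ with $|\ve h|/n < \delta$ and write $\ve h = \sum_{i=1}^n (\ve h/n)$. Applying subadditivity $n-1$ times gives
\begin{equation*}
\theta(\ve h) \;\leq\; n\, \theta(\ve h/n) \;\leq\; n \cdot (L+\epsilon)\cdot |\ve h|/n \;=\; (L+\epsilon)|\ve h|.
\end{equation*}
The same bound holds for $\theta(-\ve h)$ by the identical argument.

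Combining the two ingredients, for arbitrary $\x,\y \in \R^k$ and $\ve h = \x - \y$, I obtain $\theta(\x) - \theta(\y) \leq \theta(\ve h) \leq (L+\epsilon)|\ve h|$ and $\theta(\y) - \theta(\x) \leq \theta(-\ve h) \leq (L+\epsilon)|\ve h|$, hence $|\theta(\x)-\theta(\y)| \leq (L+\epsilon)|\x-\y|$. Letting $\epsilon \downarrow 0$ yields Lipschitz continuity with constant $L$, as claimed.

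The only mild subtlety is the global-to-local reduction: subadditivity only tells us about sums, so we must approximate a large vector $\ve h$ by $n$ copies of the small vector $\ve h/n$, which works cleanly because $\ve h$ and $\ve h/n$ are colinear and the bound is homogeneous of degree one in $|\ve h|$. Note that we never need a lower bound on $\theta$: the two one-sided subadditivity inequalities together provide both sides of the absolute value.
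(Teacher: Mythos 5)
Your proof is correct and relies on the same two ideas as the paper: the subadditivity inequality $\theta(\x) - \theta(\y) \leq \theta(\x-\y)$ to reduce the Lipschitz estimate to a one-variable bound on $\theta$, and a subdivision of $\ve h$ into $n$ small colinear pieces to globalize the $\limsup$ estimate. The paper organizes this slightly differently---it first derives a local Lipschitz estimate and then telescopes over equal subintervals of $[\x,\y]$---but the underlying mechanism is identical to your iterated-subadditivity bound $\theta(\ve h) \leq n\,\theta(\ve h/n)$.
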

\begin{proof}
  Fix any $\delta > 0$. Since $\limsup_{{\ve h}\to 0} \frac{\lvert\theta({\ve
      h})\rvert}{\lvert {\ve h}\rvert} = L$, there exists $\epsilon > 0$ such
  that for any ${\ve x},{\ve y} \in \R^k$ satisfying $|{\ve x} - {\ve y}| <
  \epsilon$, $\frac{|\theta({\ve x}-{\ve y})|}{|{\ve x}-{\ve y}|} <
  L+\delta$. By subadditivity, $|\theta({\ve x}-{\ve y})| \geq |\theta({\ve
    x}) - \theta({\ve y})|$ and so $\frac{|\theta({\ve x}) - \theta({\ve
      y})|}{|{\ve x}-{\ve y}|} < L + \delta$ for all ${\ve x},{\ve y} \in
  \R^k$ satisfying $|{\ve x} - {\ve y}| < \epsilon$. This immediately implies
  that for {\em all} ${\ve x},{\ve y} \in \R$, $\frac{|\theta({\ve x}) -
    \theta({\ve y})|}{|{\ve x}-{\ve y}|} < L + \delta$, by simply breaking the
  interval $[{\ve x},{\ve y}]$ into equal subintervals of size at most
  $\epsilon$. Since the choice of $\delta$ was arbitrary, this shows that for
  every $\delta > 0$, $\frac{|\theta({\ve x}) - \theta({\ve y})|}{|{\ve
      x}-{\ve y}|} < L + \delta$ and therefore, $\frac{|\theta({\ve x}) -
    \theta({\ve y})|}{|{\ve x}-{\ve y}|} \leq L$. Therefore, $\theta$ is
  Lipschitz continuous with Lipschitz constant $L$.
\end{proof}

\begin{proof}[of Theorem~\ref{Theorem:functionContinuous}]
The minimality of $\pi^1, \pi^2$ is clear. Since we assume $\pi^1, \pi^2 \geq 0$, $\pi = \frac{1}{2}\pi^1 + \frac{1}{2}\pi^2$ implies that $\pi^i \leq 2\pi$ for $i = 1,2$. Therefore if $\limsup_{{\ve h}\to 0} \frac{\lvert\pi({\ve h})\rvert}{\lvert {\ve h}\rvert} = L < \infty$, then $\limsup_{{\ve h}\to 0} \frac{\lvert\pi^i({\ve h})\rvert}{\lvert {\ve h}\rvert} \leq 2L< \infty$ for $i=1,2$.
Applying Lemma~\ref{lem:lipschitz}, we get Lipschitz continuity for all three functions.\end{proof}

The following is a slight generalization of the Interval Lemma that appears in~\cite{bccz08222222}. The proof is a minor modification of the original proof.

\subsection{Finite test for minimality of piecewise linear functions}
\label{section:minimalityTest}
In this subsection, we show that there is an easy test to see if a continuous piecewise
linear function over $\P_q$ is minimal.  

\begin{lemma}
\label{lemma:subSym}
Suppose that $\pi$ is a continuous piecewise
linear function over $\P_q$ and $\pi(\0) = 0$. 
 \begin{enumerate}
\item
$\pi$  is subadditive if and only if  $\pi(\x) + \pi(\y) \geq \pi(\x\oplus \y)$ for all $\x,\y \in \frac{1}{q}\Z^2$,

\item $\pi$ is symmetric if and only if $\pi(\x) + \pi(\f\ominus \x) = 1$ for all $\x \in \frac{1}{q} \Z^2$.
\end{enumerate}
 \end{lemma}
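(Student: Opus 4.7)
The plan is to exploit the piecewise linear structure on both sides of each equivalence, so that non-negativity (resp.\ the value~$1$) on a finite set of vertices forces the inequality (resp.\ equation) everywhere. The forward implications of (1) and (2) are immediate, since subadditivity and symmetry are stated for all real $\x,\y$. For the converse directions, the key observation is that an affine function on a polytope that is non-negative (resp.\ vanishes) at all vertices is non-negative (resp.\ vanishes) on the whole polytope.

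For part~(1), I would consider the slack function $\Delta\pi(\x,\y) = \pi(\x) + \pi(\y) - \pi(\x\oplus\y)$. Since $\pi$ is continuous piecewise linear over $\P_q$, the restriction $\Delta\pi|_F$ is affine for every cell $F \in \Delta\P_q$, as noted in the paper. By \autoref{lemma:vertices}, every vertex $(\x,\y)$ of every cell $F \in \Delta\P_q$ satisfies $\x,\y \in \frac{1}{q}\Z^2$. Hence, if $\Delta\pi(\x,\y) \geq 0$ for all $\x,\y \in \frac{1}{q}\Z^2$, then on each cell $F$ the affine function $\Delta\pi_F$ is non-negative at all vertices of $F$, which forces $\Delta\pi_F \geq 0$ on $F$. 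Since the cells $F$ cover $\R^2 \times \R^2$, we conclude that $\pi$ is subadditive.

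For part~(2), I would introduce $g(\x) = \pi(\x) + \pi(\f\ominus\x) - 1$. Since $\f \in \frac{1}{q}\Z^2$ (so the reflection $\bar\rho_{\f}$ maps vertices of $\P_q$ to vertices of $\P_q$), the map $\x \mapsto \f\ominus\x$ is a piecewise affine self-map of $\P_q$, and therefore $g$ is continuous piecewise linear over $\P_q$. By assumption $g$ vanishes at every vertex of $\P_q$, that is, at every point of $\frac{1}{q}\Z^2$. Since $g$ is affine on each triangle $T \in \I_{q,2}$ and vanishes at the three vertices of $T$, it vanishes identically on $T$, hence on all of $\R^2$; this gives symmetry.

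The only mildly subtle point is the first step, which depends crucially on the unimodularity encoded in \autoref{lemma:vertices}: without it, the vertices of cells of $\Delta\P_q$ could lie at points with denominators larger than $q$, and the finite test on $\frac{1}{q}\Z^2$ would be insufficient. Once that lemma is invoked, the remaining arguments are purely about affine functions on simplices and require no computation.
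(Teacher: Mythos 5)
Your proof of part~(1) is essentially identical to the paper's: both reduce to $\Delta\pi \geq 0$ at the vertices of $\Delta\P_q$, invoke \autoref{lemma:vertices} to see those vertices lie in $\frac{1}{q}\Z^4$, and conclude by affineness of $\Delta\pi_F$ on each cell $F$. For part~(2), however, you take a genuinely different and arguably cleaner route. The paper stays in $\R^2\times\R^2$: it finds the cell $F\in\Delta\P_q$ containing $(\x,\f\ominus\x)$, observes that $(\x,\f\ominus\x)$ lies in a face $\hat F$ of $F$ that is contained in the hyperplane $\{(\x,\y):\x\oplus\y=\f\}$, and then argues that $\Delta\pi_F$ vanishes at the vertices of $\hat F$ and hence on all of $\hat F$. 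You instead work with the one-variable function $g(\x)=\pi(\x)+\pi(\f\ominus\x)-1$ on $\R^2$ and observe that, because $\f\in\frac{1}{q}\Z^2$, the map $\x\mapsto\f\ominus\x$ sends $\P_q$-triangles to $\P_q$-triangles, so $g$ is itself continuous piecewise linear over $\P_q$; since it vanishes at all vertices of $\P_q$, it vanishes identically. Your version avoids the four-dimensional complex $\Delta\P_q$ entirely for part~(2) and needs only the two-dimensional structure of $\P_q$. Both arguments rely on the fact that $\f\in\frac{1}{q}\Z^2$ (the paper cites \autoref{lemma:faVertex} for this, and it is a hypothesis implicitly needed for the ``if'' direction of part~(2) to hold at all); you should say explicitly where this comes from, as your parenthetical assumes it rather than deriving it.
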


 \begin{proof}
Clearly the forward direction of both statements is true.  We will show the reverse of each.  
For subadditivity, we need to show that $\Delta \pi \geq 0$.  Since $\Delta  \pi$ is piecewise linear over $\Delta \P_q$, we just need to show that $\Delta \pi(\x,\y) \geq 0$ for any $(\x,\y) \in \verts(\Delta \P_q)$.  By Lemma \ref{lemma:vertices}, $\verts(\Delta \P_q) \subseteq  \frac{1}{q}\Z^4$, and the result follows.\\
Next, we show symmetry.  Since $\0,\f \in \frac{1}{q}\Z^2$ and $\pi(\0) = 0$, we have that $\pi(\f) = 1$.  Let $\x \in [0,1]^2$ and let $F \in \Delta \P_q$ such that $(\x, \f \ominus \x) \in F$.

 Similarly, to show symmetry, we need to show that $\Delta\pi(\x,\y) = 0$ for all $\x,\y \in [0,1]^2$ such that $\x \oplus  \y = \f$.  Let $\x,\y \in [0,1]^2$ such that $\x\oplus \y = \f$.  Since $\f \in \frac{1}{q}\Z^2$ by Lemma \ref{lemma:faVertex}, $(\x,\y) \in \relint(\hat F)$ for some face $\hat F$ of some $F \in \Delta \P_q$ and  $\hat F \subseteq \{\,(\x,\y) \st \x\oplus \y = f\,\}$.   Since $\Delta\pi_F(\u,\v) = 0$ for all $(\u,\v) \in \verts(F) \subset \frac{1}{q}\Z^2$ when $\u \oplus \v = \f$, and $\Delta\pi_F$ is affine, it follows that $\Delta\pi(\x,\y) = \Delta\pi_F(\x,\y) = 0$.  
\end{proof}
The following theorem is a direct corollary of Lemma \ref{lemma:subSym} and Theorem \ref{thm:minimal}.
\begin{theorem}[Minimality test]
\label{minimality-check}
A function $\pi \colon \R^2 \to \R$ that is continuous piecewise linear over $\P_q$ is minimal if
and only if
\begin{enumerate}
\item $\pi(\0) = 0$,
\item $\pi(\x) + \pi(\y) \geq \pi(\x\oplus \y)$ for all $\x,\y \in \frac{1}{q}\Z^2$,
\item  $\pi(\x) + \pi(\f\ominus \x) = 1$ for all $\x \in \frac{1}{q} \Z^2$.
\end{enumerate}
\end{theorem}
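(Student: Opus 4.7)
The plan is to read the theorem as a direct consequence of Theorem~\ref{thm:minimal} (the Gomory--Johnson characterization of minimality) together with Lemma~\ref{lemma:subSym}. The role of the lemma is precisely to reduce the infinitely many subadditivity and symmetry constraints to constraints on the finite grid~$\frac{1}{q}\Z^2$, so the bulk of the argument has already been packaged.

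For the forward direction, assume $\pi$ is minimal. Theorem~\ref{thm:minimal} then provides $\pi(\ve r) = 0$ for every $\ve r\in \Z^2$ (specializing to $\ve r=\0$ gives condition~1), global subadditivity (restricting to pairs $\x,\y\in\frac{1}{q}\Z^2$ gives condition~2), and symmetry (restricting $\x$ to $\frac{1}{q}\Z^2$ gives condition~3, using the fact that $\f\in\frac{1}{q}\Z^2$ by \autoref{lemma:faVertex}).

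For the reverse direction, assume conditions~1--3 hold. Since $\pi$ is continuous piecewise linear on the quotient complex~$\P_q$, it is automatically $\Z^2$-periodic, so $\pi(\0)=0$ propagates to $\pi(\ve r)=0$ for all $\ve r\in\Z^2$. Lemma~\ref{lemma:subSym}(1) upgrades condition~2 to global subadditivity, and Lemma~\ref{lemma:subSym}(2) upgrades condition~3 to global symmetry. To apply Theorem~\ref{thm:minimal} I still need non-negativity of~$\pi$, which is the only hypothesis not listed among the three conditions. I would obtain it by iterating the now-global subadditivity: $q\,\pi(\x)\ge \pi(q\x)=\pi(\0)=0$ for every $\x\in\frac{1}{q}\Z^2$, so $\pi\ge 0$ at every vertex of $\P_q$ (recall that these vertices are precisely $\frac{1}{q}\Z^2$). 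Since $\pi$ is affine on each triangle of $\P_q$, non-negativity at the vertices propagates by convex combination to the whole plane. With $\pi\ge 0$, $\pi(\ve r)=0$ on $\Z^2$, and global subadditivity and symmetry all in hand, Theorem~\ref{thm:minimal} concludes that $\pi$ is minimal.

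The only step that is not a mechanical appeal to the two results cited is the derivation of non-negativity, which I expect to be the main (but still very short) obstacle. Everything else is bookkeeping: matching the finite-grid conditions with their global counterparts via Lemma~\ref{lemma:subSym} and using periodicity to turn $\pi(\0)=0$ into $\pi|_{\Z^2}\equiv 0$.
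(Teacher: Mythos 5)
Your proposal follows the same route as the paper: the paper proves \autoref{minimality-check} by declaring it a direct corollary of \autoref{lemma:subSym} and \autoref{thm:minimal}, and you unpack exactly that combination. The forward direction is a straightforward restriction of the Gomory--Johnson conditions to $\frac1q\Z^2$, and the reverse direction uses \autoref{lemma:subSym} to upgrade the finite grid conditions to global subadditivity and symmetry before invoking \autoref{thm:minimal}.

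Where you go slightly beyond the paper's terse ``direct corollary'' is in supplying the non-negativity of $\pi$: \autoref{thm:minimal} lists $\pi\ge 0$ as a hypothesis, yet neither the statement of \autoref{minimality-check} nor \autoref{lemma:subSym} says anything about it. Your derivation $q\,\pi(\x)\ge\pi(q\x)=\pi(\0)=0$ for $\x\in\frac1q\Z^2$, followed by affine interpolation on the triangles of $\P_q$, is precisely the missing step, and it is correct. So the proposal is a faithful, somewhat more careful rendering of the paper's argument rather than a genuinely different one.
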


\subsection{Properties of valid triples}
\label{s:properties-valid-triples}

\begin{lemma}\label{lemma:cases}
Suppose $\pi$ is continuous piecewise linear over $\P_q$ and is diagonally constrained.  Suppose that $(I,J,K) \in E(\pi, \P_q)$.  Then one of the following is true.
\begin{enumerate}
\item $I,J,K \in \P_{q,0} \cup \Idiag$,
\item  $I, J,K \in \P_{q,2}$,
\item One of $I,J,K$ is in $\I_{q,0}$, while the other two are in $\I_{q,2}$,
\item One of $I,J,K$ is in $\Idiag$, while the other two are in $\I_{q,2}$
\end{enumerate}
\end{lemma}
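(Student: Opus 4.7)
The plan is to use the diagonally constrained hypothesis together with a dimension-based case analysis. Since the lemma is applied in the paper to a maximal valid triple (see the uses in \autoref{lemma:not-extreme1} and \autoref{lemma:not-extreme2}), I would first observe that any $(I, J, K) \in E(\pi, \P_q)$ extends to some maximal valid triple $(I^*, J^*, K^*) \in E_{\max{}}(\pi, \P_q)$ with $I\subseteq I^*$, $J\subseteq J^*$, $K\subseteq K^*$, so the substantive case analysis happens at the maximal level. Applying the diagonally constrained hypothesis to $(I^*, J^*, K^*)$, each of $I^*, J^*, K^*$ lies in $\I_{q,0} \cup \Idiag \cup \I_{q,2}$ and therefore has dimension $0$, dimension $1$ (as a diagonal edge), or dimension $2$ (as a triangle). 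The problem then reduces to classifying the dimension triple $(\dim I^*, \dim J^*, \dim K^*)\in\{0,1,2\}^3$.

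The analysis rests on two elementary observations. First, \autoref{lemma:translate} (and its symmetric counterpart when $I$ itself is a singleton) says that whenever one of the three faces is a singleton $\{\a\}$, the other two are images of each other under the isometry $\bar\rho_\a$ or $\bar\tau_\a$ and therefore share the same dimension and type. Hence, patterns with exactly one vertex must have the remaining two faces both vertices (case 1), both diagonal edges (case 1), or both triangles (case 3); the diagonally constrained hypothesis already excluded vertical or horizontal edges from $\I_{q,0} \cup \Idiag \cup \I_{q,2}$, so no other configurations arise. Second, all diagonal edges are parallel to $(-1,1)$, so the Minkowski sum of two diagonal edges is one-dimensional. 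Condition~(i) of \autoref{def:valid-triple} therefore excludes the pattern $(1,1,2)$, because $K^*$ cannot be a two-dimensional triangle inside a one-dimensional set. A symmetric argument via the projections $\proj_\x F = I^*$ and $\proj_\y F = J^*$ excludes $(2,1,1)$ and $(1,2,1)$: if $I^*$ were a triangle and $J^*, K^*$ were diagonal edges on lines $\{y_1 + y_2 = c_{J^*}\}$ and $\{z_1 + z_2 = c_{K^*}\}$ respectively, then for every $(\x,\y)\in F$ we would have $x_1 + x_2 = c_{K^*} - c_{J^*}$, so $\proj_\x F$ would lie on a single diagonal line, contradicting the requirement that it fill the two-dimensional triangle $I^*$.

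After these exclusions the surviving patterns are exactly the four listed cases: all dimensions in $\{0, 1\}$ (case 1), $(2,2,2)$ (case 2), one coordinate equal to $0$ with the other two equal to $2$ (case 3), and one coordinate equal to $1$ with the other two equal to $2$ (case 4). I expect the main obstacle to be the bookkeeping of the $3^3 = 27$ possible dimension patterns together with their permutations, but each is either directly a subcase of one of the four listed cases or is ruled out by a one-line application of \autoref{lemma:translate} or the diagonal Minkowski-sum observation; no further real-analysis input is required.
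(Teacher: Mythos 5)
Your proposal is correct, and it takes a somewhat different route from the paper's proof, so a comparison is in order. Both proofs start from the observation that $I,J,K\in\I_{q,0}\cup\Idiag\cup\I_{q,2}$ and then rule out the $12$ dimension patterns where exactly one of the three faces is a triangle. The paper does this uniformly: in each of the three sub-cases (the triangle is $K$, $J$, or $I$) it observes that the relevant Minkowski sum or difference of the other two faces ($I\oplus J$, $K\ominus I$, or $K\ominus J$) has dimension at most~$1$, because vertices and diagonal edges (all parallel to $(-1,1)$) produce low-dimensional sums and differences, whereas condition (ii)/(iii) of \autoref{def:valid-triple} forces the $2$-dimensional face to lie inside that set. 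You instead split according to whether a vertex appears: if so, \autoref{lemma:translate} (and its variant when $I$ is the singleton) pins the remaining two faces to be isometric and hence of the same type, immediately ruling out any mixed pattern involving a vertex; if not, you handle the purely diagonal-plus-triangle patterns $(1,1,2)$, $(2,1,1)$, $(1,2,1)$ by Minkowski sums and by projecting $F(I,J,K)$ onto each factor. Both are sound; your version is more granular and makes the role of the reflection/translation structure explicit, while the paper's is more compact. One genuine improvement in your write-up is that you flag the maximality issue: the ``diagonally constrained'' hypothesis literally constrains only triples in $E_{\max{}}(\pi,\P_q)$, so the opening assertion that $I,J,K\in\I_{q,0}\cup\Idiag\cup\I_{q,2}$ is only justified for maximal triples (which is indeed the only way the lemma is invoked); the paper silently elides this. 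Two small stylistic nits: the phrase ``patterns with exactly one vertex must have the remaining two faces both vertices'' is internally contradictory (you mean ``at least one vertex''), and in the projection argument the relation $x_1+x_2 = c_{K^*}-c_{J^*}$ should carry a ``modulo $1$'' qualifier, leaving $\x$ confined to a finite union of diagonal lines rather than a single one — which still contradicts $\proj_\x F$ being $2$-dimensional.
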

\begin{proof}
By definition of diagonally constrained, $I,J,K \in \I_{q,0} \cup \Idiag \cup \I_{q,2}$.  There are 27 possible ways to put $I,J,K$ into those three sets.  Above, 15 possibilities are described.  We will show that the 12 remaining cases not list above are not possible because $(I,J,K)$ is assumed to be a valid triple.
\begin{enumerate}
\item Suppose $I, J \in \I_{q,0} \cup \Idiag$, $K \in I_{q,2}$.   Then $K' = I \oplus J \subsetneq K$, and therefore $F(I,J,K) = F(I,J,K')$, and therefore $(I,J,K)$ is not a valid triple.
\item Suppose $I, K\in \I_{q,0} \cup \Idiag$, $J \in I_{q,2}$.  Then $K \ominus I \subsetneq J$, and therefore, there exists a $J' \subsetneq J$ such that $F(I,J,K) = F(I,J',K)$, and therefore $(I,J,K)$ is not a valid triple.
\item Suppose $J, K\in \I_{q,0} \cup \Idiag$, $I \in I_{q,2}$. This is similar to the last case.
\end{enumerate}
\end{proof}

\begin{lemma}\label{lemma:interiorDiag}
Suppose $(I,J,K)$ is a valid triple. The following are true.
\begin{enumerate}[\rm i.]
\item Suppose $I,J \in \I_{q,2}$. Then for every point $\u \in \intr(I)$ there exists a point $\v \in \intr(J)$ such that $\u \oplus \v \in \relint(K)$.
\item Suppose $I,K \in \I_{q,2}$. Then for every point $\ve w \in \intr(K)$ there exists a point $\u \in \intr(I)$ such that $\ve w \ominus \u \in \relint(J)$.
\end{enumerate}
\end{lemma}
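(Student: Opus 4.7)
The plan is to deduce both parts from the valid-triple containments $I \subseteq K \ominus J$ and $K \subseteq I \oplus J$ (items (ii) and (i) of Definition~\ref{def:valid-triple}), together with a single convex-analytic identity for topological interiors of Minkowski sums. Throughout I would work in a local lift to $\R^2$: since each face of $\P_q$ has diameter at most $1/q$, representatives of $I, J, K$ can be chosen so that $\oplus$ and $\ominus$ become ordinary Minkowski sum and difference, and the valid-triple containments lift accordingly.

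The workhorse is the identity
\[
\intr(A + B) = \relint(A) + \intr(B)
\]
for convex $A, B \subseteq \R^n$ with $\intr(B) \neq \emptyset$. The inclusion $\supseteq$ is immediate since each translate $a + \intr(B)$ with $a \in \relint(A)$ is open in $\R^n$. For $\subseteq$: given $c \in \intr(A+B)$, fix any $p = a_1 + b_1$ with $a_1 \in \relint(A)$ and $b_1 \in \intr(B)$; since $c$ is topologically interior to $A + B$, there is a $c' = a_2 + b_2 \in A + B$ with $c = (1-\lambda) p + \lambda c'$ for some $\lambda \in (0, 1)$; then $a := (1-\lambda) a_1 + \lambda a_2 \in \relint(A)$ and $b := (1-\lambda) b_1 + \lambda b_2 \in \intr(B)$ by convexity, and $c = a + b$.

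Granted this identity, part (i) is two lines. Valid-triple condition (ii) gives $I \subseteq K \ominus J$. Since $I \in \I_{q,2}$, the set $\intr(I)$ is open in $\R^2$ and lies in the compact set $K \ominus J$, hence $\intr(I) \subseteq \intr(K \ominus J)$. Applying the identity with $A = K$ and $B = -J$, which is legitimate because $J \in \I_{q,2}$ makes $\intr(-J)$ non-empty, rewrites the right-hand side as $\relint(K) \ominus \intr(J)$. Every $\u \in \intr(I)$ therefore decomposes as $\u = \w \ominus \v$ with $\w \in \relint(K)$ and $\v \in \intr(J)$, i.e., $\u \oplus \v = \w \in \relint(K)$. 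This runs uniformly regardless of whether $K$ is a vertex, a diagonal edge, or a triangle, so no case split on Lemma~\ref{lemma:cases} is required. Part (ii) is the mirror statement with the roles of $I$ and $K$ swapped: condition (i) of the definition gives $K \subseteq I \oplus J$; $K \in \I_{q,2}$ passes this to $\intr(K) \subseteq \intr(I \oplus J)$; and the identity with $A = J$, $B = I$ rewrites $\intr(I \oplus J)$ as $\intr(I) \oplus \relint(J)$, producing for each $\w \in \intr(K)$ a decomposition $\w = \u \oplus \v$ with $\u \in \intr(I)$ and $\v = \w \ominus \u \in \relint(J)$.

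The main step requiring care is the convex-analytic identity itself in the regime where $A$ has strictly smaller dimension than $\R^n$ (in part (i), when $K$ is a $0$- or $1$-face). The sketch above is deliberately designed to handle this uniformly: it only requires a point of $\relint(A)$, which exists for every non-empty convex set, rather than a point of $\intr(A)$. Once this identity is in hand, each part of the lemma becomes a one-line application of the corresponding valid-triple containment together with the elementary fact that an $\R^n$-open subset of a closed set lies in that set's topological interior.
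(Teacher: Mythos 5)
Your proof is correct, but it follows a genuinely different route from the paper's. The paper fixes $\u\in\intr(I)$, takes any $\v\in J$, $\w\in K$ with $\u\oplus\v=\w$, and then runs a four-way case analysis on which of the linear forms $(1,0)\cdot{}$, $(0,1)\cdot{}$, $(1,1)\cdot{}$ vanish modulo~$1/q$ at $\v$ and at~$\w$; in each case it exhibits an explicit small vector~$\ve d$ pushing $\v$ into $\intr(J)$ and $\w$ into $\relint(K)$ simultaneously. Your argument instead isolates the convex-analytic identity $\intr(A+B)=\relint(A)+\intr(B)$ for convex $A,B$ with $\intr(B)\neq\emptyset$, whose proof (open-cover for $\supseteq$, line-segment principle for relative interiors for $\subseteq$) is correct, and then applies it once per part, with no case split on the dimension of~$K$. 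The trade-off is that the paper works directly with $\oplus$ and never leaves the torus, whereas your argument requires a lift to~$\R^2$ under which $\oplus$ becomes genuine Minkowski addition and the containments $I\subseteq K\ominus J$, $K\subseteq I\oplus J$ become $\tilde I\subseteq\tilde K-\tilde J$, $\tilde K\subseteq\tilde I+\tilde J$. You flag this as a ``local lift'' but leave it unjustified; it does go through because the integral translation vector attached to the nonempty polytope $F(I,J,K)$ is unique (any two would differ by a nonzero lattice vector yet both lie in $I+J-K$, a set of small diameter), but that step deserves a sentence and is the one place your write-up is thinner than the paper's. In exchange, your identity handles $K$ of any dimension uniformly and removes the bookkeeping about the hyperplane arrangement, which makes for a cleaner proof of both parts.
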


\begin{proof}
\emph{Part (i).}
Let $\u \in \intr(I)$ and so $(1,0)^T\u$, $(0,1)^T\u$ and $(1,1)^T\u$ are all nonzero modulo $\frac1q$. Since $(I,J,K)$ is a valid triple, there exist $\v \in J$ and $\ve w \in K$ such that $\u \oplus \v = \ve w$. Thus, $(1,0)^T\v$ and $(1,0)^T\ve w$ are different modulo $\frac1q$ (resp. for $(0,1)^T\v$, $(0,1)^T\ve w$ and $(1,1)^T\v$, $(1,1)^T\ve w$). Note that for any point $\x \in \R^2$, either $(1,0)^T\ve x$, $(0,1)^T\ve x$ and $(1,1)^T\ve x$ are all $0$ modulo $\frac1q$, or exactly one of these numbers is $0$ modulo $\frac1q$, or none of them are $0$. Thus, we consider these cases :

{\em Case 1:} $(1,0)^T\ve w$, $(0,1)^T\ve w$ and $(1,1)^T\ve w$ are all $0$ modulo $\frac1q$. Then $\v \in \intr(J)$ since $J \in \I_{q,2}$. Then one can choose a vector $\ve d$ such that $\ve w' = \ve w + \ve d \in \relint(K)$ and $\v' = \ve v + \ve d \in \intr(J)$. Then $\u \oplus \v' = \ve w'$ and we are done. 

{\em Case 2:} $(1,0)^T\ve v$, $(0,1)^T\ve v$ and $(1,1)^T\ve v$ are all $0$ modulo $\frac1q$.  Then $\ve w \in \intr(K)$ and one can choose again a vector $\ve d$ such that $\ve w' = \ve w + \ve d \in \intr(K)$ and $\v' = \ve v + \ve d \in \intr(J)$. Then $\u \oplus \v' = \ve w'$ and we are done. 

{\em Case 3:} Exactly one of $(1,0)^T\ve w$, $(0,1)^T\ve w$ and $(1,1)^T\ve w$ is $0$ modulo $\frac1q$ and the same holds for $\v$. This means $\ve w$ and $\v$ lie on different hyperplanes in the arrangement $\mathcal{H}_q$. But then one can again choose a vector $\ve d$ such that $\ve w' = \ve w + \ve d \in \relint(K)$ and $\v' = \ve v + \ve d \in \intr(J)$. Then $\u \oplus \v' = \ve w'$ and we are done. 

{\em Case 4:} None of $(1,0)^T\ve w$, $(0,1)^T\ve w$ and $(1,1)^T\ve w$ is $0$ modulo $\frac1q$ and the same holds for $\v$. This means $\v \in \intr(J)$ and $\ve w \in \intr(K)$ already and we are done.

Part (ii) can be proved in a similar way.

% If $\v$ lies on one of the hyperplanes from the arrangement $\H_q$, then since $K \in \I_q$, we must have $\ve w \in \relint(K)$

\old{and let $\epsilon > 0$ be such that the open ball centered at $\u$ or radius $\epsilon$ is a subset of $I$. Since $(I,J,K)$ is a valid triple, there exist $\v \in J$ and $\ve w \in K$ such that $\u \oplus \v = \ve w$. Choose vectors $\ve d, \ve d'$ of norm at most $\frac{\epsilon}{2}$ such that $\v + \ve d \in \relint(J)$ and $\ve w + \ve d' \in \relint(K)$ (if $\v \in \relint(J)$ one can choose $\ve d = 0$ and if $\ve w \in \relint(K)$ and $\v$ then one can choose $\ve d' = 0$). Now observe that $\u' = \u + \ve d' - \ve d \in \intr(I)$ because both $\ve d, \ve d'$ have norm at most $\frac{\epsilon}{2}$ and $\u' $}
\end{proof}

\old{
\begin{lemma}\label{lemma:interiorDiag}
Suppose $(I,J,K)$ is a valid triple.
\begin{enumerate}[\rm i.]
\item If $I,J \in \I_{q,2}$ and $K \in \Idiag$, then for every point $\v \in \intr(J)$ there exists a point $\u \in \intr(I)$ such that $\u \oplus \v \in K$.
\item If $I,K \in \I_{q,2}$ and $J \in \Idiag$, then for every point $\ve w \in \intr(K)$ there exists a point $\u \in \intr(I)$ such that $\ve w \ominus \u \in J$.
\end{enumerate}
\end{lemma}
\begin{proof}
\emph{Part (i).}
 Let $\v \in \intr(J)$. Since $(I,J,K)$ is a valid triple, there exists a $\u \in I$ such that $\u \oplus \v \in K$.  If $\u \in \intr(I)$, then we are done.  So suppose $\u \notin \intr(I)$.  We first claim that there exists a $\epsilon \in \R$ such that $\u + \epsilon (-1,1)^T \in \intr(J)$.  If not, then $\ve 1 \cdot \x =  \ve 1 \cdot \u$ is a  hyperplane that intersects $I$ only on the boundary.  Therefore, $\ve 1\cdot \x \neq \lambda$ for all $x \in I$ for either all $\lambda > \ve 1 \cdot \u$ or $\lambda < \ve 1 \cdot\u$.   Without loss of generality, assume the sign is $>$.  Since $\v \in \intr(J)$, there exists an $\epsilon > 0$ such that $ \v - \epsilon (1,1)^T \in J$, and there exists a $ \hat \x \in I$ such that $\v - \epsilon (1,1)^T + \hat \x \in K$.  But then we must have $\ve 1 \cdot\hat \x = \ve 1 \cdot \x + 2 \epsilon$, which is not possible.  Therefore, $\u +  \epsilon (-1,1)^T \in \intr(I)$.

Note that $\u - \epsilon (-1,1)^T \notin \I$.  Suppose without loss of generality that $\epsilon > 0$.  By construction of $\P_q$, if $\x \in I$, this implies either that $(1,0)\cdot \x \geq (1,0) \cdot \u$ or $(1,0)\cdot \x \leq (1,0) \cdot \u$.  Without loss of generality, suppose the sign is $\geq$.  
Next, we claim that $\u +  \epsilon (-1,1)^T + \v \in K$.  Suppose not.  Since $\v \in \intr(J)$, $\v + \epsilon (-1,1)^T \in J$, and therefore there exists a $\hat \u \in I$ such that $\v + \epsilon (-1,1)^T + \hat \u \in K$.  But then we must have $ (1,0) \cdot \hat \u < (1,0) \cdot \u$, which is a contradiction.  Therefore, we conclude that $\u +  \epsilon (-1,1)^T \in \intr(I)$ and $\u + \epsilon (-1,1)^T \oplus \v \in K$.  

\emph{Part (ii).}
This part can be proved in a similar way to part (i).
\end{proof}
}

\subsection{Interval lemma}
The so-called Interval
Lemma was introduced by Gomory and Johnson in~\cite{tspace}.  We prove this in a more general setting with three functions by a modifying a proof from \cite{bccz08222222}.
\begin{lemma}[Interval Lemma]\label{one-dim-interval_lemma} Given real numbers $u_1 < u_2$ and $v_1 < v_2$, let $U = [u_1, u_2]$, $V = [v_1, v_2]$, and $U + V = [u_1 + v_1, u_2 + v_2]$.
Let $f : \; U \; \rightarrow
\mathbb{R}$, $g : \; V \; \rightarrow
\mathbb{R}$, $h : \; U+V \; \rightarrow
\mathbb{R}$ be bounded functions. \\ If $f(u)+g(v) = h(u+v)$ for every
$u \in U$ and $v \in V$, then there exists $c\in \R$ such that $f(u)=f(u_1)+c(u-u_1)$ for every $u\in U$, $g(v)=g(v_1)+c(v-v_1)$ for every $v\in V$, $h(w)=h(u_1+v_1)+c(w-u_1-v_1)$ for every $w\in U+V$.
\end{lemma}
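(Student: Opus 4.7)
The plan is to reduce the three-function equation to a single Cauchy-type equation on an interval and then exploit boundedness to force linearity.

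I would begin by normalizing. Set $\tilde f(u) = f(u) - f(u_1)$, $\tilde g(v) = g(v) - g(v_1)$, and $\tilde h(w) = h(w) - h(u_1+v_1)$. Evaluating the hypothesis at $(u_1, v_1)$ shows $h(u_1+v_1) = f(u_1) + g(v_1)$, so the hypothesis becomes
\begin{equation*}
\tilde f(u) + \tilde g(v) = \tilde h(u+v), \qquad u \in U,\ v \in V.
\end{equation*}
Setting $v = v_1$ (resp.\ $u = u_1$) gives $\tilde f(u) = \tilde h(u + v_1)$ and $\tilde g(v) = \tilde h(u_1 + v)$, so all three normalized functions are encoded by $\tilde h$. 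Writing $\alpha = u_2 - u_1$, $\beta = v_2 - v_1$ and $H(t) = \tilde h(u_1 + v_1 + t)$ on $[0, \alpha + \beta]$, the equation becomes
\begin{equation*}
H(a + b) = H(a) + H(b), \qquad a \in [0, \alpha],\ b \in [0, \beta],
\end{equation*}
with $H(0) = 0$ and $H$ bounded (inherited from $h$).

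The main obstacle is to upgrade this restricted bounded Cauchy relation into linearity. I would pick $t^* \in (0, \min(\alpha, \beta))$, set $c = H(t^*)/t^*$, and consider $\phi(t) = H(t) - ct$, which is bounded, satisfies the same restricted Cauchy identity, and vanishes at both $0$ and $t^*$. A straightforward induction along partitions that stay inside the rectangle yields $\phi(nt) = n\phi(t)$ whenever $(n-1)t \le \min(\alpha,\beta)$; consequently $|\phi(t)| \le (\sup|\phi|)/n$ for such $t$, which forces $\phi$ to be continuous at $0$. By a second application of the Cauchy identity, continuity at $0$ propagates to continuity on $(0, \alpha+\beta)$. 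Iteration also gives $\phi(rt^*) = r\phi(t^*) = 0$ for all positive rationals $r$ with $rt^* \le \alpha+\beta$; since such points are dense, continuity forces $\phi \equiv 0$, and hence $H(t) = ct$ on $[0, \alpha+\beta]$.

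Finally, unwinding the substitutions yields $\tilde h(w) = c(w - u_1 - v_1)$, $\tilde f(u) = \tilde h(u + v_1) = c(u - u_1)$, and $\tilde g(v) = \tilde h(u_1 + v) = c(v - v_1)$, which are exactly the three claimed affine identities for $f$, $g$, and $h$ with common slope $c$. The technical nuisance throughout the middle step is the rectangle constraint on $(a,b)$: one cannot apply the Cauchy identity to arbitrary decompositions summing to a value in $[0, \alpha+\beta]$, so every iteration must be arranged so that its first argument stays in $[0,\alpha]$ and its second in $[0,\beta]$. Choosing $t^* < \min(\alpha,\beta)$ is what makes all the needed iterations go through.
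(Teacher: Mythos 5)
Your proposal is correct in outline but takes a genuinely different route from the paper. The paper works directly with $f$, $g$, $h$: it first derives a telescoping identity $f(u+p\varepsilon)-f(u)=p\bigl(g(v_1+\varepsilon)-g(v_1)\bigr)$, from which it deduces $c$-linearity along rational translates, and then disposes of irrational translates via a contradiction with boundedness (if the deviation $\delta$ from linearity were nonzero at some $u^*$, iterating the telescoping identity would make $|\delta|$ exceed any prescribed bound). Your proof instead normalizes so that all three functions are encoded by a single function $H$ satisfying a rectangle-restricted Cauchy equation, subtracts off the candidate linear part to get $\phi$, uses the same iteration to deduce that boundedness forces continuity at $0$, propagates continuity via the Cauchy relation, and concludes by density. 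Both exploit identical underlying facts (iteration of additivity plus boundedness); yours is more topological in flavor, and the normalization to one function is arguably cleaner, while the paper's is more bare-bones and never mentions continuity.

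One imprecision you should tighten: you assert that iteration gives $\phi(rt^*)=r\phi(t^*)=0$ for all positive rationals $r$ with $rt^*\le\alpha+\beta$, but the relation $\phi(nt)=n\phi(t)$ as you have derived it only applies when $(n-1)t\le\min(\alpha,\beta)$, so direct iteration establishes vanishing of $\phi$ only on a dense subset of $[0,\min(\alpha,\beta)]$ rather than of $[0,\alpha+\beta]$. The conclusion is still reachable: once continuity gives $\phi\equiv 0$ on $[0,\min(\alpha,\beta)]$, you push this to the whole of $[0,\alpha+\beta]$ by a further application of the rectangle Cauchy identity (e.g.\ assuming $\beta=\min(\alpha,\beta)$, write $s=(s-\beta)+\beta$ with $s-\beta\in[0,\alpha]$ and $\beta\in[0,\beta]$ and induct on $\lceil s/\beta\rceil$). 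You should insert that extension step explicitly rather than leaving it subsumed in the phrase ``since such points are dense, continuity forces $\phi\equiv 0$.''
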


\begin{proof} We first show the following.
\medskip

\noindent{\em Claim 1. Let $u \in U$,  and let $\varepsilon>0$ such that $v_1+\varepsilon\in V$. For every nonnegative integer $p$ such that $u+p\varepsilon\in U$, we have $f(u + p\varepsilon) - f(u) = p(g(v_1 + \varepsilon) -
g(v_1))$.}
\medskip

For $h=1, \ldots, p$, by hypothesis $f(u + h\varepsilon) + g(v_1) =
h(u + h\varepsilon + v_1) = f(u+(h-1)\varepsilon) + g( v_1+
\varepsilon)$. Thus $f(u + h\varepsilon) - f(u+(h-1)\varepsilon) = g(v_1 + \varepsilon) -
g(v_1)$, for $h=1,\ldots,p.$
By summing the above $p$ equations, we obtain $f(u + p\varepsilon) - f(u) = p(g(v_1 + \varepsilon) - g(v_1))$. This concludes the proof of Claim~1.\medskip

Let $\bar u, \bar u'\in U$ such that $\bar u-\bar u'\in \Q$ and
$\bar u>\bar u'$.  Define $c:=\frac{f(\bar u)-f(\bar u')}{\bar
u-\bar u'}$.
\bigskip

\noindent{\em Claim 2. For every $u,u'\in U$ such that $u-u'\in \Q$,
we have $f(u)-f(u')=c(u-u')$. }\medskip

We only need to show that, given $u,u'\in U$
such that $u-u'\in \Q$, we have $f(u)-f(u')=c(u-u')$. We may
assume $u>u'$. Choose a positive rational $\varepsilon$ such that
$\bar u-\bar u'=\bar p \varepsilon$ for some integer $\bar p$, $ u-
u'= p \varepsilon$ for some integer $ p$, and $v_1+\varepsilon \in
V$. By Claim~1,
$$f(\bar u)-f(\bar u')=\bar p (g(v_1 + \varepsilon) -
g(v_1)) \quad \mbox{
and
}\quad f( u)-f( u')= p (g(v_1 + \varepsilon) -
g(v_1)).$$
Dividing the last equality by $u-u'$ and the second to last by $\bar u-\bar u'$, we get
$$\frac{g(v_1 + \varepsilon) -
g(v_1)}{\varepsilon}=\frac{f(\bar u)-f(\bar u')}{\bar u-\bar u'}=\frac{f( u)-f( u')}{ u- u'}=c.$$
Thus $f(u)-f(u')=c(u-u')$. This concludes the proof of Claim~2.
\bigskip

\noindent{\em Claim 3. For every $u\in U$, $f(u) = f(u_1) + c
(u-u_1)$.}
\medskip

Let $\delta(x) = f(x) - c  x$ for all $x\in U$. % and $\delta'(x) = g(x) - cx$ for all $x \in V$. 
We show that  $\delta(u) = \delta(u_1)$ for all $u \in U$ and this proves the claim.  Since
$f$ is bounded on $U$, $\delta$ is bounded over
$U$. Let $M$ be a number such that $|\delta(x)| \leq M$
for all $x\in U$.

Suppose by contradiction that, for some $u^*\in U$, $\delta(u^*)
\neq \delta(u_1)$. Let $N$ be a positive integer such that
$|N(\delta(u^*)-\delta(u_1))| > 2M$.\\ By Claim~2, $\delta(u^*) =
\delta(u)$ for every $u\in U$ such that $u^*-u$ is rational. Thus
there exists  $\bar u$ such  that $\delta(\bar u)=\delta(u^*)$, $u_1
+ N(\bar u - u_1) \in U$ and $v_1 + \bar u - u_1 \in V$. Let $\bar u
- u_1 = \varepsilon$. By Claim~1,
\[
\begin{array}{rcl}
\delta(u_1 + N\varepsilon) - \delta(u_1) &  =  & (f(u_1 + N\varepsilon) - c(u_1+N\varepsilon)) - (f(u_1) - cu_1) \\
& = & N(g(v_1+\varepsilon)-g(v_1)) -c(N\varepsilon) \\
& = & N(f(u_1 + \varepsilon)-f(u_1)) - c(N\varepsilon) \\
& = & N(f(u_1 + \varepsilon)-f(u_1) - c\varepsilon) \\
& = & N(\delta(u_1 + \varepsilon) - \delta(u_1) ) \\
& = & N(\delta(\bar u)-\delta(u_1))
\end{array}
\]
Thus $|\delta(u_1 + N\varepsilon) - \delta(u_1)| =  |N(\delta(\bar
u)-\delta(u_1))|=|N(\delta(u^*)-\delta(u_1))| > 2M$, which implies
$|\delta(u_1 + N\varepsilon)|+ |\delta(u_1)|>2M$, a contradiction.
This concludes the proof of Claim~3.
\bigskip

By symmetry between $U$ and $V$, Claim~3 implies  that there exists
some constant $c'$ such that, for every $v\in V$,
$g(v)=g(v_1)+c'(v-v_1)$. We show $c'=c$. Indeed, given
$\varepsilon>0$ such that $u_1+\varepsilon\in U$ and
$v_1+\varepsilon\in V$, $c\varepsilon=f(u_1 + \varepsilon) -
f(u_1) = g(v_1 + \varepsilon) -g(v_1)=c'\varepsilon$, where
the second equality follows from Claim~1.\\ Therefore, for every
$v\in V$, $g(v) = g(v_1) + c  g(v-v_1)$. Finally, since
$f(u)+g(v)=h(u+v)$ for every $u\in U$ and $v\in V$, we have that for every
$w \in U+V$, $h(w) = h(u_1+v_1) + c  (w-u_1-v_1)$. \end{proof}

\old{\begin{lemma}[Higher Dimensional Interval Lemma] \label{interval_lemma} Let $U$, $V$ be parallelotopes in $\R^k$, i.e., there exist $u_0, u_1, \ldots, u_k \in \R^k$ and $v_0, v_1\ ldots, v_k \in \R^k$ such that $U = \{u_0 + \sum_{i=1}^k\lambda_iu_i : 0 \leq \lambda_i \leq 1\}$ and $V = \{v_0 + \sum_{i=1}^k\mu_iv_i : 0 \leq \mu_i \leq 1\}$. Let $f : \; U \; \rightarrow
\mathbb{R}$, $g : \; V \; \rightarrow
\mathbb{R}$, $h : \; U+V \; \rightarrow
\mathbb{R}$ be bounded functions. \\ If $f(u)+g(v) = h(u+v)$ for every
$u \in U$ and $v \in V$, then there exists $c \in \R^k$ such that $f(u)=f(u_0)+c\cdot(u-u_0)$ for every $u\in \intr(U)$, $g(v)=g(v_0)+c\cdot(v-v_0)$ for every $v\in \intr(V)$, $h(w)=h(u_0+v_0)+c\cdot(w-u_0-v_0)$ for every $w\in \intr(U+V)$.
\end{lemma}}

\subsection{Generalized interval lemma and corollaries}
\label{s:generalized-interval-lemma}

%\texttt{Amitabh:  Generalized full dimensional Interval Lemma}
%\texttt{1.  Isn't this lemma missing some assumptions? {\color{blue} Done. Additivity and compactness are the new assumptions} 2.  Give this lemma a nice name. {\color{blue} Done.} 3.  Add a few sentence discussion on generalizing the interval lemma. {\color{blue} Done.} -Robert}

The following lemma is a generalization to higher dimensions of the interval lemma that appears in the literature for the infinite group problem.

\begin{lemma}[Higher Dimensional Interval Lemma]\label{lem:generalized_interval_lemma}
Let $\pi : \R^k \to \R$ be a bounded function. Let $U$ and $V$ be compact convex subsets of $\R^k$ such that $\pi(\u) + \pi(\v) = \pi(\u+\v)$ for all $\u \in U$ and $\v \in V$. Corresponding to every linear subspace $L$ of $\R^k$,  there exists a vector $\g$ in the dual space $L'$ of $L$ with the following property. For any $\u^0 \in U$ and $\v^0 \in V$ such that $\u^0$ (resp. $\v^0$) is in the interior of $(\u^0 + L) \cap U$ (resp. $(\v^0 + L) \cap V$) in the relative topology of $L$, the following conditions hold:
\begin{itemize}
\item[(i)] $\pi(\u^0 + \p) = \pi(\u^0) + \langle \g, \p \rangle$ for all $\p \in L$ such that $\u^0 + \p \in U$.
\item[(ii)] $\pi(\v^0 + \p) = \pi(\v^0) + \langle \g, \p \rangle$ for all $\p \in L$ such that $\v^0 + \p \in V$.
\item[(iii)] $\pi(\u^0 + \v^0 + \p) = \pi(\u^0 + \v^0) + \langle \g, \p \rangle$ for all $\p \in L$ such that $\v^0 + \p \in U +V$.
\end{itemize}

\end{lemma}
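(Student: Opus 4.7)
The plan is to reduce the higher-dimensional statement to iterated applications of the one-dimensional Interval Lemma~(Lemma~\ref{one-dim-interval_lemma}).  First I would fix a basis $e_1,\dots,e_d$ of $L$.  For each $i$, and for any pair of segments $I_U\subseteq U$ and $I_V\subseteq V$ in the direction $e_i$, Lemma~\ref{one-dim-interval_lemma} applied to $\pi$ on $I_U$, $I_V$, and $I_U+I_V$ (parametrised by the direction $e_i$) yields a common slope $c_i$.  This slope is intrinsic: if $I'_V$ is another segment in direction $e_i$ in $V$, pairing it with the same $I_U$ forces it to inherit the slope already pinned down on $I_U$, and symmetrically for two parallel segments in $U$.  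Thus $c_i$ is well-defined, and I would define a linear functional $\g\in L'$ by $\langle \g,e_i\rangle = c_i$.

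Next I would promote these per-direction slopes to full affine linearity on neighbourhoods.  By the rel-int hypothesis, fix $\delta>0$ with $B_U := \{\,\u^0 + \sum_i \lambda_i e_i \st |\lambda_i|\le\delta\,\} \subseteq (\u^0+L)\cap U$, and analogously $B_V$ around $\v^0$.  Iterating Lemma~\ref{one-dim-interval_lemma} coordinate by coordinate---legitimate inside $B_U$ because every intermediate segment stays in the parallelotope by convexity---yields $\pi(\u^0+\sum_i\lambda_i e_i)=\pi(\u^0)+\langle \g,\sum_i\lambda_i e_i\rangle$ throughout $B_U$, and the analogous equalities on $B_V$ and on $B_U+B_V$.

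The global step is the heart of the argument.  For~(i), given any $\p\in L$ with $\u^0+\p\in U$, the segment $I_U := [\u^0,\u^0+\p]$ lies in $U$ by convexity.  I would pick a short segment $I_V := \{\,\v^0+t\p\st t\in[-\varepsilon,\varepsilon]\,\}\subseteq B_V$.  Lemma~\ref{one-dim-interval_lemma}, parametrised by the direction $\p$, forces $\pi$ to be affine along $I_U$ with the same slope as along $I_V$; by the local step that slope equals $\langle \g,\p\rangle$ per unit of the parameter, so $\pi(\u^0+\p)=\pi(\u^0)+\langle \g,\p\rangle$.  Part~(ii) is symmetric in the roles of $U$ and $V$, and part~(iii) falls out of the same application, since the one-dimensional lemma simultaneously produces affine linearity on the Minkowski sum $I_U+I_V$---a segment in direction $\p$ that contains $\{\,\u^0+\v^0+t\p\st t\in[0,1]\,\}$---yielding $\pi(\u^0+\v^0+\p)=\pi(\u^0+\v^0)+\langle \g,\p\rangle$ whenever this segment lies in $U+V$.

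The main obstacle is exactly this local-to-global transition: a naive coordinate-by-coordinate chain from $\u^0$ to $\u^0+\p$ risks exiting $U$ at an intermediate step, which would spoil any inductive argument.  The decisive observation is that Lemma~\ref{one-dim-interval_lemma} delivers affine linearity on the \emph{entire} paired segments at once, so taking $I_U$ to be the full segment $[\u^0,\u^0+\p]\subseteq U$ and pairing it with a short $I_V$ inside $B_V$ transports the slope $\langle\g,\p\rangle$, already determined locally near $\v^0$, across all of $I_U$ in one stroke.
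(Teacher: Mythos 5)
Your proposal is correct and diverges from the paper's argument at exactly the place you call ``the heart.'' Both you and the paper begin identically: apply the one-dimensional Interval Lemma (Lemma~\ref{one-dim-interval_lemma}) along each basis direction of $L$, argue that the resulting slope $c_i$ is intrinsic to the direction (the paper does this by noting $c_i$ is unchanged when either $\u^0$ or $\v^0$ is replaced; you do it by pairing two parallel segments in $U$ against a common segment in $V$), and set $\langle\g,\p^i\rangle=c_i$. The divergence is in promoting per-direction slopes to the affine formula~(i). The paper writes $\p=\sum_i\lambda_i\p^i$ and peels off one coordinate at a time, invoking the single-direction affine identity~\eqref{eq:affine-prop} at the shifted point $\u^0+\sum_{i<m}\lambda_i\p^i$. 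This chain implicitly needs each intermediate point to lie in $U$ (or at least in a position where~\eqref{eq:affine-prop} can be applied), which is not automatic for an arbitrary compact convex $U$ --- precisely the pitfall you flag. Your global step sidesteps the chain entirely: you apply the one-dimensional Interval Lemma \emph{once}, in the direction of $\p$ itself, with $I_U=[\u^0,\u^0+\p]\subseteq U$ by convexity and a short segment $I_V\subseteq B_V$ whose slope the local step on the parallelotope has already pinned to $\langle\g,\p\rangle$. This is cleaner and more robust, at the modest cost of the separate local-affineness step on $B_U,B_V$, which itself uses the safe parallelotope chaining to tie the direction-$\p$ slope back to $\g$.

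One caveat that applies equally to you and to the paper: statement~(iii) is asserted for all $\p\in L$ with $\u^0+\v^0+\p\in U+V$, and $(U+V)\cap(\u^0+\v^0+L)$ can strictly contain the Minkowski sum $\bigl((\u^0+L)\cap U\bigr)+\bigl((\v^0+L)\cap V\bigr)$ (take, e.g., two triangles whose horizontal slices through $\u^0,\v^0$ are short but whose Minkowski sum has a long horizontal slice through $\u^0+\v^0$). Your ``falls out of the same application'' and the paper's ``the same argument applies'' only cover $\p$ in the smaller range; the general case needs an extra decomposition of $\u^0+\v^0+\p$ as $\u'+\v'$ with $\u'\in U$, $\v'\in V$ and a further argument to relate $\pi(\u')+\pi(\v')$ back to $\pi(\u^0)+\pi(\v^0)$. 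This is harmless for the paper's downstream applications but deserves a remark if you want the lemma in full generality.
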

\begin{proof}
We fix an arbitrary linear subspace $L$ and exhibit a vector $\g \in L'$ with the stated property. Let $\p^1, \ldots, \p^m$ be a basis for $L$ (we obviously have $m \leq k$). Now consider any $\u^0 \in U$ and $\v^0 \in V$ such that $\u^0$ (resp. $\v^0$) is in the interior of $(\u^0 + L) \cap U$ (resp. $(\v^0 + L) \cap V$) in the relative topology of $L$. Let $u^i_1 < u^i_2 \in \R$, $i = 1, \ldots, m$ be such that the intersection of the line $\u^0 + \lambda \p^i$ with $U$ is given by $\{ u^0 + \lambda p \colon u^i_1 \leq \lambda \leq u^i_2\}$  (these numbers exist since $U$ is assumed to be compact and convex), similarly, $v^i_1 < v^i_2 \in \R$ are defined with respect to $V$, $\v^0$ and $\p^i$. 

Let $f^i : [u^i_1, u^i_2] \to \R$ be defined by $f^i(\lambda) = \pi(\u^0 + \lambda \p^i)$, $g^i : [v^i_1, v^i_2] \to \R$ be defined by $g^i(\lambda) = \pi(\v^0 + \lambda \p^i)$ and $h^i : [u^i_1 + v^i_1, u^i_2 + v^i_2] \to \R$ be defined by $h^i(\lambda) = \pi(\u^0 + \v^0 + \lambda \p^i)$. Applying Lemma~\ref{one-dim-interval_lemma}, there exists a constant $c_i \in \R$ such that 
\begin{equation}\label{eq:affine-prop}
\begin{array}{l}
\pi(\u^0 + \lambda \p^i) = \pi(\u^0) + c_i\cdot \lambda \textrm{ for all } \lambda \in [u^i_1, u^i_2], \\
\pi(\v^0 + \lambda \p^i) = \pi(\u^0) + c_i\cdot \lambda  \textrm{ for all } \lambda \in [v^i_1, v^i_2] \textrm{ and }\\
\pi(\u^0 + \v^0 + \lambda \p^i) = \pi(\u^0 + \v^0) + c_i\cdot \lambda \textrm{ for all }\lambda \in [u^i_1 + v^i_1, u^i_2 + v^i_2].
\end{array}
\end{equation}

Notice that this argument could be made with $\u^0$ and {\em any other} $\v \in V$ with the property that $\v$ is in the interior of $(\v + L) \cap V$. Thus, $c_i$ is independent of $\v^0$. Applying a symmetric argument by fixing $\v^0$ and considering different $\u \in U$, we see that $c_i$ is also independent of $\u^0$. In other words, $c_i$, $i=1, \ldots, m$ only depend on $\pi$, $L$ and the two sets $U$ and $V$, and~\eqref{eq:affine-prop} holds for any $\u \in U$ and $\v \in V$ with the property that $\u$ (resp. $\v$) is in the interior of $(\u + L) \cap V$ (resp. $(\u + L) \cap V$) in the relative topology of $L$.

We choose $\g \in L'$ as the unique dual vector satisfying $\langle \g, \p^i \rangle = c_i$. Now for any $\p \in L$ such that $\u^0 + \p \in U$. We can then represent $\p = \sum_{i=1}^m \lambda_i \p^i$ such that $u^i_1 \leq \lambda_i \leq u^i_2$. Thus, $\begin{array}{rcl} \pi(\u^0 + \p) & = & \pi(\u^0 + \sum_{i=1}^m \lambda_i \p^i)\end{array}.$
Now using~\eqref{eq:affine-prop} with $i = m$ we have 
$$\begin{array}{rcl} \pi(\u^0 + \sum_{i=1}^m \lambda_i \p^i) & = &\pi(\u^0 + \sum_{i=1}^{m-1}\lambda_i \p^i + \lambda_m \p^m) \\
& = & \pi(\u^0 + \sum_{i=1}^{m-1}\lambda_i \p^i) + c_m\cdot\lambda_m 
\end{array}
$$
which follows because the $c_i$'s do not depend on the particular point $\u^0$ and in the case above we apply it on the point $\u^0 + \sum_{i=1}^{m-1}\lambda_i \p^i$. By applying this argument iteratively, we find that

$$\begin{array}{rcl} \pi(\u^0 + \sum_{i=1}^m \lambda_i \p^i) & = & \pi(\u^0) + \sum_{i=1}^{m}c_i\cdot \lambda_i \\
& = & \pi(\u^0) + \sum_{i=1}^{m} \langle \g, \p^i \rangle \cdot \lambda_i \\
& = &  \pi(\u^0) + \langle \g,  \sum_{i=1}^{m} \lambda_i \p^i \rangle \\
& = &  \pi(\u^0) + \langle \g, \p \rangle
\end{array}
$$

This proves condition $(i)$ in the statement of the lemma. The same argument applies for proving conditions $(ii)$ and $(iii)$. \end{proof}

% The above lemma has the following immediate corollaries.
\old{The first corollary we state strengthens the hypothesis on $\pi$ to be continuous and bounded, but relaxes the hypothesis on the additivity relations.

\begin{corollary} Let $\pi : \R^k \to \R$ be a continuous and bounded function. Let $U$ and $V$ be compact convex subsets of $\R^k$ and $W$ be a closed subset of $U+V$, such that $\pi(\u) + \pi(\v) = \pi(\u+\v)$ for all $\u \in U$ and $\v \in V$ with $\u + \v \in W$. Let $L$ be a subspace of $\R^k$ and let $R(L) \subseteq \R^k \times \R^k \times \R^k$ be the set of all tuples $(\u,\v,\w)$ such that $\u$ is in the interior o

 satisfying the following: for every $\u \in U$ such that $\u$ is in the interior of $\u + L \cap U$ in the relative topology of $L$, there exists

Corresponding to every linear subspace $L$ of $\R^k$,  there exists a vector $\g$ in the dual space $L'$ of $L$ with the following property. For any $\u_0 \in U$, $\v_0 \in V$ and $\w_0 \in W$ such that $\u_0$ (resp. $\v_0$, $\w_0$) is in the interior of $(\u_0 + L) \cap U$ (resp. $(\v_0 + L) \cap V$, $(\w_0 + L) \cap W$) in the relative topology of $L$, the following conditions hold:
\begin{itemize}
\item[(i)] $\pi(\u_0 + \p) = \pi(\u_0) + \langle \g, \p \rangle$ for all $\p \in L$ such that $\u_0 + \p \in U$.
\item[(ii)] $\pi(\v_0 + \p) = \pi(\v_0) + \langle \g, \p \rangle$ for all $\p \in L$ such that $\v_0 + \p \in V$.
\item[(iii)] $\pi(\w_0 + \p) = \pi(\u_0 + \langle \g, \p \rangle$ for all $\p \in L$ such that $\w_0 + \p \in W$.
\end{itemize}
\end{corollary}

\begin{proof}
Given $\ve u_0 \in U$ and $\ve v_0 \in V$ satisfying the relative topology condition, there exists a closed ball $B_1, B_2 \subset L$ such that $\ve u_0$ (resp. $\ve v_0$) is in the interior of $B_1$ (resp. $B_2$) in the relative topology of $L$, such that $$
\end{proof}
}
\old{
\texttt{Don't these corollaries need the patching arguments? ( I pasted previous lemma here) -Robert}
{ \tt 
\begin{lemma}
$\pi$ is affine imposing in $\I^2_{G,1}$.
\end{lemma}
}
\begin{proof}
{\tt 
Consider any $I \in \I^2_{G,1}$. By definition, there exists $F\in
\P^{0,2}_\B$ such that $I \subseteq (1,0)F$ or $I \subseteq (1,1)F$. We will
show the result for $I \subseteq (1,0) F$; the proof for $I \subseteq (1,1) F$
is similar. 
Fix any $x_0 \in \intr(I)$. We will show that there exists $c \in \R$ such that $\pi^1(y) = \pi^1(x_0) + c\cdot (y-x_0)$ for all $y \in \intr(I)$. This will prove the claim.

Let $y \in \intr(I)$.  Since $x_0,y \in \intr(I)$, there exist points $(p^0_1,
p^0_2), (q_1, q_2) \in \relint(F)$ such that $x_0 = (1,0)\cdot (p^0_1, p^0_2)$
and $y = (1,0)\cdot (q_1, q_2)$. 
 Therefore, we can construct a sequence of closed intervals $U_0, \ldots, U_n
 \subseteq \intr(I)$ and another sequence of closed intervals $V_0, \ldots, V_n$ such that $U_i \times V_i \subseteq \relint(F)$ and $(p^0_1, p^0_2) \in U_0\times V_0$ and $(q_1, q_2) \in U_n \times V_n$ such that $\intr(U_{i-1} \times V_{i-1}) \cap \intr(U_{i} \times V_{i}) \neq \emptyset$, for all $i = 1, \ldots, n$. 
Therefore, we can find a sequence of points $(p^1_1, p^1_2), \ldots, (p^n_1, p^n_2)$ such that $(p^i_1, p^i_2) \in\intr(U_{i-1} \times V_{i-1}) \cap \intr(U_{i} \times V_{i})$. Let $x_i = (1,0)\cdot (p^i_1, p^i_2)$ for all $i=1,\ldots, n$.

Now, since $\Delta\pi(x,y) = 0$ over $\relint(F)$, we have that $\Delta\pi(u,v) = 0$ for all $(u,v) \in U_i\times V_i$, $i = 0, \ldots, n$. This implies $\pi(u) + \pi(v) = \pi(u\oplus v)$ for all $(u,v) \in U_i\times V_i$, and so the same relation holds for $\pi^1$. Using the Interval Lemma, there exists $c_i$ such that $\pi^1(u) = \pi^1(x_i) + c_i\cdot(u - x_i)$, for all $u \in U_i$ and this holds for every $i = 0, \ldots, n$. Observe that $x_i$ belongs to the interior of both $U_{i-1}$ and $U_i$ for all $i= 1, \ldots, n$. Therefore, we must have $c_{i-1} = c_i$ for all $i = 1, \ldots, n$; we take $c$ to be this common value. Now using the relation $\pi^1(y) = \pi^1(x_0) + (\sum_{i=1}^n \pi^1(x_i) - \pi^1(x_{i-1})) + (\pi^1(y) - \pi^1(x_n))$, we find that $\pi^1(y) = \pi^1(x_0) + c\cdot(y - x_0)$.
}
\end{proof}

Define the function $\bar \theta : \R^2 \times \R^2 \to \R^2$ as $\bar\theta (\u,\v) = \u + \v$ for all $\u,\v \in \R^2$. Observe that $\bar\theta$ is a continuous function. Let $\proj_U : \R^2 \times \R^2 \to \R^2$ be defined as $\proj_U(\u,\v) = \u$ and $\proj_V : \R^2 \times \R^2 \to \R^2$ be defined as $\proj_U(\u,\v) = \v$.
}

Now the lemmas stated in \autoref{s:real-analysis} follow as corollaries.

\begin{proof}[of \autoref{cor:triangle+triangle}]
Let $U(\ve x, r)\subseteq \R^2$ denote the $\ell_\infty$ ball of radius $r$ around $\ve x \in \R^2$. Define 
$$r(\ve u) = \sup\{r \in \R \colon \exists \ve v, \ve w \textrm{ such that } U(\ve u, r) \subseteq I, U(\ve v, r) \subseteq J, U(\ve w, 2r)\subseteq K\}.$$ Since $(I,J,K)$ is a valid triple, by Lemma~\ref{lemma:interiorDiag} (i), for any $\ve u \in \intr(I)$, there exist $\ve v \in \intr(J)$ and $\ve w \in \intr(K)$ such that $\ve u \oplus \ve v = \ve w$. Thus, $r(\u) > 0$ for every $\u \in I$.

\begin{claim} 
$r(\ve u)$ is a continuous function of $\ve u$.
\end{claim}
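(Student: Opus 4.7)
My plan is to prove continuity of $r$ by separately establishing lower and upper semi-continuity, both via elementary perturbation and compactness arguments. The key technical ingredient is that for any face $P\in\P_q$, the $\ell_\infty$-distance-to-boundary function $d_P(\ve x)= \sup\{\,s\geq 0 : U(\ve x,s)\subseteq P\,\}$ is $1$-Lipschitz continuous on $\R^2$ (taking the value $0$ outside $P$). With this, the definition of $r$ can be rewritten as
\[
r(\ve u) \;=\; \min\!\Bigl(d_I(\ve u),\; \sup_{\ve v\in J}\min\!\bigl(d_J(\ve v),\tfrac12 d_K(\ve u\oplus \ve v)\bigr)\Bigr),
\]
where $\ve w=\ve u\oplus \ve v$ is the unique companion to $\ve u,\ve v$ forced by the valid triple.

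For \emph{lower semi-continuity} at $\ve u_0$, I would fix $\varepsilon>0$ and choose $\ve v_0\in J$ and $\ve w_0=\ve u_0\oplus \ve v_0\in K$ witnessing the value $r(\ve u_0)-\varepsilon$. For $\ve u$ with $|\ve u-\ve u_0|<\varepsilon$, I would use $\ve v_0$ itself as the witness for $\ve u$, with companion $\ve w=\ve u\oplus \ve v_0=\ve w_0+(\ve u-\ve u_0)$. A direct ball-containment calculation, shrinking the radius by an additional $\varepsilon$, shows $U(\ve u,r(\ve u_0)-3\varepsilon)\subseteq I$, $U(\ve v_0,r(\ve u_0)-3\varepsilon)\subseteq J$, and $U(\ve w,2(r(\ve u_0)-3\varepsilon))\subseteq K$, giving $r(\ve u)\geq r(\ve u_0)-3\varepsilon$.

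For \emph{upper semi-continuity}, I would take a sequence $\ve u_n\to\ve u_0$ and set $r^*=\limsup_n r(\ve u_n)$; after passing to a subsequence, $r(\ve u_n)\to r^*$. For each $\varepsilon>0$, for large $n$ there exist witnesses $\ve v_n\in J$ and $\ve w_n=\ve u_n\oplus\ve v_n\in K$ realizing radius $r^*-\varepsilon$. Since $J$ and $K$ are compact, I extract a subsequence along which $\ve v_n\to \ve v^*\in J$ and $\ve w_n\to \ve w^*\in K$, with $\ve w^*=\ve u_0\oplus \ve v^*$ by continuity of $\oplus$. The closed containments $U(\ve u_0,r^*-\varepsilon)\subseteq I$, $U(\ve v^*,r^*-\varepsilon)\subseteq J$, and $U(\ve w^*,2(r^*-\varepsilon))\subseteq K$ pass to the limit, yielding $r(\ve u_0)\geq r^*-\varepsilon$; letting $\varepsilon\to 0$ gives $r(\ve u_0)\geq r^*$.

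The main obstacle I foresee is only bookkeeping: ensuring that the modular addition $\oplus$ is handled correctly (using representatives in $\R^2$ of the appropriate branch so that $\ve u\mapsto \ve u\oplus\ve v$ is continuous in the relevant neighborhood, which is possible since we stay within fixed faces $I, J, K$ of $\P_q$). Once this is set, both semi-continuity arguments are short, and combining them yields continuity of $r$, completing the claim.
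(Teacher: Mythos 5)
Your proof is correct, but it takes a genuinely different route from the paper's. The paper disposes of the claim in a single line: it observes that $r(\ve u)$ is the optimal value of a parametric linear program in the variables $(r,\ve v,\ve w)$ with linear constraints (the ball-containment conditions, $U(\ve u,r)\subseteq I$, $U(\ve v,r)\subseteq J$, $U(\ve w,2r)\subseteq K$, and $\ve u\oplus\ve v=\ve w$), and then cites the general fact that the optimal value of a parametric LP is continuous in the parameter on the region where the program is feasible and bounded (feasibility here comes from the paper's Lemma on valid triples, boundedness from compactness of the faces). Your approach instead rebuilds that continuity from first principles via lower and upper semi-continuity, using the $1$-Lipschitzness of the $\ell_\infty$ distance-to-boundary functions $d_I$, $d_J$, $d_K$, a direct perturbation of the witness $\ve v_0$ for the lower bound, and a compactness/subsequence argument for the upper bound. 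Both are sound. The paper's appeal to parametric LP theory is shorter but less self-contained (the reader must recall the hypotheses under which the optimal-value function is continuous and verify they hold); your argument is longer but fully elementary and makes the mechanism transparent, and it handles the achievability of the supremum implicitly by working with $r^*-\varepsilon$ rather than with the sup itself. One small point worth flagging: the displayed definition of $r(\ve u)$ in the paper omits the constraint $\ve w=\ve u\oplus\ve v$, even though the LP in the claim's proof and all subsequent uses clearly include it; you correctly supply that constraint, and your rewritten formula $r(\ve u)=\min\bigl(d_I(\ve u),\sup_{\ve v\in J}\min(d_J(\ve v),\tfrac12 d_K(\ve u\oplus\ve v))\bigr)$ is the right reading. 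Your caveat about choosing a consistent branch of $\oplus$ locally is also the right thing to say and is exactly what the LP formulation in the paper is also tacitly doing.
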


\begin{proof}
$r(\ve u)$ is the optimal value of the linear program with variables $r, \ve v, \ve w$ given by $$\max r \textrm{ subject to } \ve u \oplus \ve v = \ve w, U(\ve u, r) \subseteq I, U(\ve v, r) \subseteq J, U(\ve w, 2r) \subseteq K.$$ All the constraints can be written as linear constraints. Since the value of a parametric linear program is continuous in the parameter (in this case the parameter is $\ve u$) we are done.
\end{proof}

We will now show that for any two points $\ve x_1, \ve x_2 \in \intr(I)$, there exist finitely many full-dimensional parallelotopes $U_1, \ldots, U_k$ in $\R^2$ such that $\ve x_1 \in U_1$, $\ve x_2 \in U_k$ and $\intr(U_i) \cap \intr(U_{i+1}) \neq \emptyset$ for all $i = 1, \ldots, k-1$. Moreover, we will show that $\pi$ is affine over each $U_i$, $i=1, \ldots, k$. This will imply that in fact $\pi$ is affine over $\intr(I)$ and therefore, by continuity, over $I$. By a symmetric argument, one can show that $\pi$ is affine over $J$. This will then show that $\pi$ is affine over $K$.

Given $\ve x_1, \ve x_2 \in \intr(I)$, consider the minimum value $\epsilon$
of $r(\ve u)$ as $\ve u$ varies over the line segment $[\ve x_1, \ve
x_2]$. Note that $\epsilon$ is strictly greater than 0 as it is the minimum of
a strictly positive function over a compact set. This implies that we can find
a set of points $\u_1 =\ve x_1, \ldots, \u_k = \ve x_2$ on the line segment
$[\ve x_1, \ve x_2]$ such that if we let $U_i = U(\u_i, \epsilon)$ we have the
property that $\ve x_1 \in U_1$, $\ve x_2 \in U_k$ and $\intr(U_i) \cap
\intr(U_{i+1}) \neq \emptyset$ for all $i = 1, \ldots, k-1$. Now, by the
definition of $r(\ve u_i)$ which is greater than or equal to $\epsilon$, there
exist $\ve v_i$ and $\ve w_i$, $i = 1, \ldots, k$ such that $U(\ve u_i,
r(u_i)) \in I, U(\ve v_i, r(\u_i)) \in J, U(\ve w_i, 2r(\ve u_i)) \in
K$. Applying Lemma~\ref{lem:generalized_interval_lemma}, with $L = \R^2$, $U =
U(\ve u_i, r(\ve u_i))$, $V =  U(\ve v_i, r(\ve u_i))$ and $\u_0 = \u_i$ and
$\v_0 \in v_i$, we obtain that $\pi$ is affine over $U(\u_i, r(\ve u_i))$ and hence over $U_i \subseteq U(\u_i, r(\u_i))$.

The fact that the gradient over $I$ and $J$ (and hence over $K$) are the same follows from the observation that Lemma~\ref{lem:generalized_interval_lemma} gives the same gradient over the parallelotopes $U = U(\ve u_i, r(\u_i))$ and $V =  U(\ve v_i, r(\u_i))$ in the above argument.
\end{proof}

Similar arguments can be used to show \autoref{cor:triangle+diagonal}.  We
omit the proof.  
\old{\begin{proof}
In Lemma~\ref{lem:generalized_interval_lemma}, we can pick $L$ to be linear space spanned by $(-1,1)^T$, $U = I$, $V = J$ and any $\u_0 \in \relint(I)$ and $\v_0 \in \relint(J)$.
\end{proof}}
%Next we prove \autoref{obs:adjacent}.

\subsection{Transferring affine linearity}
\label{s:transferring-affine-linearity}

\begin{proof}[of \autoref{obs:adjacent}]
Let $e\in \P_{q,1}$ be the common edge for $I$ and $J$. We assume that $e$ is horizontal (the argument for vertical edges is exactly the same) and let $\v^0 \in \R^2$ be the vertex of $e$ such that the other vertex is $\v^0 + (0,1)^T$.  Since $\pi$ is affine on $I$, there exists $c' \in \R$ such that $\pi(\v^0 + \lambda (0,1)^T) = \pi(\v^0) + c' \cdot\lambda$ for all $0 \leq \lambda \leq 1$. Now observe that any point in $J$ can be written as $\v^0 + \mu_1 (0,1)^T + \mu_2 (-1,1)^T$ with $0 \leq \mu_1, \mu_2 \leq 1$ and therefore, $\pi(\v^0 + \mu_1 (0,1)^T + \mu_2 (-1,1)^T) = \pi(\v^0 + \mu_1 (0,1)^T) + c\cdot\mu_2$ (using $(ii)$ in the hypothesis) and $\pi(\v^0 + \mu_1 (0,1)^T) + c\cdot\mu_2 = \pi(\v^0) + c'\cdot\mu_1 + c\cdot\mu_2$. Thus, $\pi$ is affine on $J$.
\end{proof}

\begin{proof}[of \autoref{lemma:point-and-line-Lemma}]
\emph{Case (i).}  Suppose $\{I,J\}\in \E_0$.
%Since $\partial_\v \theta$ is constant in $I$,  is affine in $I$, let $c,b \in \R$ such that $\theta(\x)
%= \ve c\x + b$ for $x\in\intr(I)$. 
Since $\pi, \pi^1, \pi^2$ are all continuous, we just prove that $\partial_\v$ is constant on $\intr(J)$.
If $\{I,J\} \in \E$, $\exists \a \in \frac{1}{q}\Z^2$ such that, setting $K = \{\a\} \in \I_{q,0}$, one of the following two cases occurs.\par\noindent
\textbf{Case 1.} $(I,J,K) \in E(\pi, \P_q)$.   Then $\pi_I(\x) + \pi_J(\y) = \pi_K(\a)$ for all $\x \in I, \y \in J, \x \oplus \y = \a$, or rewriting this, we have
$\pi_J(\x) = \pi_K(\a) - \pi_I(\a \ominus \x)$.  For any $\u \in \intr(J)$, it follows from Lemma \ref{lemma:translate} that $a \ominus \u \in \intr(I)$.
Since the right hand side is differentiable in the direction of $\v$ at $\a \ominus \u$, the left hand side is as well.  The result in this case follows by the chain rule.  
%
%   Then  ${\bar\rho}_{\a}(I) = J$. Then 
% $\theta(\a \ominus  \x) + \theta(\x) = \theta(\a)$ for all $\x\in J$.  Since
% $\a \ominus \x \in I$ for all $\x \in I$ and $I, J \subseteq [0,1]^2$, there
% exists an $\bar a$ such that $\theta(\a \ominus \x) = \ve c(\bar \a  - \x) + b$ for
% all $\x \in J$.  Then $\theta(\x) = \ve c \cdot\x - \ve c\cdot\bar \a + \theta(\a)$ for $x\in J$, and therefore
% $\theta$ is affine in $J$ with the same slope.
\par\noindent 
 \textbf{Case 2.} $(I,K,J) \in E(\pi, \P_q)$.   Then $\pi_I(\x) + \pi_K(\a) = \pi_J(\y)$ for all $\x \in I, \y \in J, \x \oplus \a = \y$, or rewriting this, we have
$\pi_J(\x \oplus \a) = \pi_I(\x) + \pi_K(\a)$.  For any $\u \in \intr(J)$, it follows from Lemma \ref{lemma:translate} that $\u \ominus \a \in \intr(I)$. Since the right hand side is differentiable in the direction of $\v$, the left hand side is as well.  The result again follows by the chain rule.  
%
%$I \subseteq D_{{\bar\tau}_\a}$ and ${\bar\tau}_\a(I) = J$ for some $\a \in G$. Then 
%$\theta(\x \ominus  \a) + \theta(\a) = \theta(\x)$ for all $\x \in J$. Again,
%Since $\x \ominus \a \in I$ for all $\x \in J$ and $I, J \subseteq
%[0,1]^2$, there exists an $\bar \a$ such that $\theta(\x \ominus \a) = \ve c\cdot( \x  - \bar
%\a) + b$ for all $\x \in J$. Then we obtain $\theta(\x) = \ve c\cdot\x - \ve c\cdot\bar \a +
%\theta(\a)$ for $\x\in J$, and therefore $\theta$ is affine in $J$ with the same slope. 

\emph{Case (ii).} Suppose $\{I,J\}\in \E_\smallsetminus$.  Using Lemma
\ref{lemma:interiorDiag}, the proof follows similar to
Case~(i).
\end{proof}

\subsection{Proof of \autoref{theorem:systemNotUnique}}
\label{s:proof-theorem:systemNotUnique}

\begin{proof}[of \autoref{theorem:systemNotUnique}]
  \emph{Part (i).}
Suppose \eqref{equation:system} does not have a unique solution. Let $\bar \varphi\colon \tfrac{1}{q} \Z^2\to\R$ be a non-trivial element in the kernel of the system above.  Then for any $\epsilon$, $\pi|_{\tfrac{1}{q} \Z^2} + \epsilon \bar\varphi$ also satisfies the system of equations. 
Let
$$
\epsilon = \min \{\,\Delta\pi_F(\x,\y)\neq 0 \st F \in \Delta \P_q, \ (\x,\y) \in \verts(F)\,\}.
$$ 
Let $\bar\pi\colon \R^2 \to \R$ be the continuous piecewise linear extension of $\varphi$ over $\P_q$ and set $\pi^1 = \pi + \tfrac{\epsilon}{3 ||\bar\pi||_\infty}
\bar\pi$, $\pi^2 = \pi - \tfrac{\epsilon}{3 ||\bar\pi||_\infty}
\bar\pi$.  Note that $0 < ||\bar\pi||_\infty < \infty$ since
$\bar\varphi$ comes from a non-trivial element in the kernel.  We claim that $\pi^1, \pi^2$ are
both minimal.  As before, we show this for $\pi^1$, and the proof for $\pi^2$ is similar.  
Since $\pi|_{\tfrac{1}{q}\Z^2}$ satisfies the system~\eqref{equation:system} and $\bar\varphi$ is an
element of the kernel, $\pi^1$ satisfies the system~\eqref{equation:system} as
well. In particular, we have $\pi^1(\0) = 0, \pi^1(\f)= 1, \pi^1((0,1)) = 0,  \pi^1((0,1)) = 0, \pi^1((1,1)) = 0$. 

Next,  $\pi^1$ is symmetric because the symmetry conditions are implied here, that is,  since we require that $\varphi(\f) = 1$, and since $\pi$ is minimal, $\Delta\pi(\u, \f- \u) = 0$ whenever $\u \in \frac{1}{q}\Z^2$, hence, by Theorem \ref{minimality-check}, $\pi^1$ is symmetric.  

Lastly, we show that $\pi^1$ is subadditive.  Let $\u,\v \in \frac{1}{q}\Z^2$.  If $\Delta\pi(\u,\v) = 0$, then $\Delta\varphi(\u,\v) = 0$, as implied by the system of equations.  Otherwise, if $\Delta\pi(\u,\v) > 0$, then   
\begin{align*}
  \Delta\pi^1(\u,\v) 
  &= \Delta\pi(\u,\v) + \frac{\epsilon}{3
    ||\bar\varphi||_\infty} \bar\varphi(\u) + \frac{\epsilon}{3
    ||\bar\varphi||_\infty} \bar\varphi(\v) - \frac{\epsilon}{3
    ||\bar\varphi||_\infty} \bar\varphi(\u\oplus \v) \\
  &\geq \Delta\pi_F(\u,\v) -
  \tfrac{\epsilon}{3 } - \tfrac{\epsilon}{3} - \tfrac{\epsilon}{3 } \geq 0
\end{align*}
Therefore, by Theorem~\ref{minimality-check}, $\pi^1$ (and $\pi^2$) is
subadditive and therefore minimal and valid.  Therefore $\pi$ is not
extreme.\smallbreak

\emph{Part (ii).}
Suppose there exist distinct, valid functions $\pi^1, \pi^2$ such that $\pi =
\tfrac{1}{2} \pi^1 +\tfrac{1}{2} \pi^2$.  Since $\pi$ is minimal and affine
imposing in $\I_{q,2}$, $\pi^1,\pi^2$ are minimal continuous piecewise linear functions over $\P_q$.  Furthermore, $\pi|_{\tfrac{1}{q} \Z^2}$ and, also $\pi^1|_{\tfrac{1}{q} \Z^2}, \pi^2|_{\tfrac{1}{q} \Z^2}$ satisfy the system
of equations \eqref{equation:system}.  If this system has a unique solution,
then $\pi = \pi^1  = \pi^2$, which is a contradiction since $\pi^1, \pi^2$
were assumed distinct.  Therefore $\pi$ is extreme. 

On the other hand, if the system \eqref{equation:system} does not have a unique solution, then by Theorem \ref{theorem:systemNotUnique}, $\pi$\ is not extreme.
\end{proof}

\end{document}